\documentclass[12pt]{article}

\usepackage{amsmath,amssymb,amsfonts,amsthm}
\usepackage{graphics}
\usepackage{epsfig}
 \usepackage{amsmath}
\usepackage{amsfonts}

\textwidth6.5in \textheight8.5in \hoffset -.5in \voffset-0.5in

\font\elevensf=cmss10 scaled\magstephalf

\newtheorem{theorem} {{\elevensf THEOREM}}[section]
\newtheorem{proposition} {{\elevensf PROPOSITION}}[section]

\newtheorem{lemma} {{\elevensf LEMMA}}[section]

\newtheorem{example} {{\elevensf EXAMPLE}}[section]

\newtheorem{remark} {{\elevensf REMARK}}[section]

\renewcommand\qed{$\blacksquare$}

\def\CC{{\rm \kern.24em \vrule width.02em height1.4ex depth-.05ex \kern-.26emC}}
\def\epsilon{\varepsilon}
\def\TagOnRight

\def\AA{{it I} \hskip-3pt{\tt A}}

\def\QQ{\rlap {\raise 0.4ex \hbox{$\scriptscriptstyle |$}} {\hskip -0.1em Q}}

\catcode`\@=11

\newcommand{\lb}{\left(}
\newcommand{\rb}{\right)}

\def\theequation{\@arabic{\c@section}.\@arabic{\c@equation}}


\begin{document}

\baselineskip 14pt
\parindent.4in
\catcode`\@=11 

\begin{center}
{\Huge \bf Bloch wave spectral analysis in the class of generalized
Hashin-Shtrikman micro-structures } \\[5mm]
{\bf \textbf{Loredana B\u{a}lilescu, Carlos Conca, Tuhin Ghosh, Jorge San Mart{\'{\i}}n and Muthusamy Vanninathan}} 
\end{center} 

\begin{abstract}
\noindent
In this paper, we use spectral methods by introducing the Bloch waves 
to study the homogenization process in the non-periodic class of generalized 
Hashin-Shtrikman micro-structures \cite[page no. 281]{T}, which incorporates both 
translation and dilation with a family of scales, including one subclass of laminates. We establish the classical homogenization result with providing the spectral
representation of the homogenized coefficients. 
It offers a new lead towards extending the Bloch spectral analysis 
in the non-periodic, non-commutative class of micro-structures. 
\end{abstract}
\vskip .5cm\noindent
{\bf Keywords:} Homogenization, Hashin-Shtrikman construction, Spectral analysis,  Bloch waves.
\vskip .5cm
\noindent
{\bf Mathematics Subject Classification:} 74Q10; 78M40

\section{Introduction}\label{intro}
\setcounter{equation}{0} 
We start with a homogenization process in the class of elliptic boundary value 
problems where the heterogeneous media is governed by the well known generalized 
Hashin-Shtrikman micro-structures \cite[page no. 281]{T}. Following by the its 
construction the coefficients are invariant in a certain way of both translation 
and dilation of the medium with incorporating a family of small scales 
$\{\varepsilon_p>0\}_p$. Precise understanding of the heterogeneity will be made 
shortly as a beginning remark we want to point it out that, here the 
coefficients are need not be periodic with a small period $(\epsilon >0)$. Periodic
micro-structure incorporates uniform translation and uniform dialation with respect 
to only one scale $\epsilon$, where as in Hashin-Shtrikman construction 
incorporates non-uniform translation and dilation with a family of 
scales $\{\epsilon_p\}_p$. For a better understanding of its homogenization process we refer 
\cite{T,JKO}, etc. In terms of Murat-Tartar's \cite{MT1,MT2} 
theory of $H$-convergence, the main result says that the (weak) limit of such 
solutions resolves a suitable boundary value problem which has constant 
coefficients that represent what is known as homogenized medium. The theory of 
$H$-convergence is very generic, it just proves the existence of the various
sub-sequential homogenized limits. Although, in a particular class of 
Hashin-Shtrikman micro-structures, all the sub-sequential homogenized limits are same
and can be calculated. It is given by some integral representation \eqref{mkl}
below like as we have in the periodic setup. In this paper we are interested about 
the methods of homogenization process and its other aspects. Let us start with 
the classical model problem of heat conduction in composite materials.\\
Let us consider a family of inhomogeneous media occupying a certain bounded 
region $\Omega$ in $\mathbb{R}^N$, parameterized by a small parameter $\epsilon$
and represented by $N\times N$ matrices of real-valued functions 
$A^{\epsilon}(x) = [a_{kl}^{\epsilon}(x)]$ defined on $\Omega$ and satisfying $A^{\epsilon}\in\mathcal{M}(\alpha,\beta;\Omega),$ for some $0<\alpha <\beta$, i.e. 
\begin{equation*}
a_{kl}^{\epsilon}(x)=a_{lk}^{\epsilon}(x),\ \ (A^{\epsilon}(x)\xi,\xi) \geq \alpha|\xi|^2,\ \ |A^{\epsilon}(x)\xi|\leq \beta|\xi|\mbox{ for any } \xi \in \mathbb{R}^N,\mbox{ a.e. }x\in\Omega.
\end{equation*}
The small scale parameter $\epsilon$ defines the heterogeneity of the 
conductivities in the medium. Following the Fourier law one considers 
the following boundary value problem in the general case:
\begin{equation*}\begin{aligned}
\mathcal{A}^{\epsilon}u^{\epsilon}(x) = -div(A^{\epsilon}(x)\nabla u^{\epsilon}(x)) &= f(x) \quad\mbox{in }\Omega,\\
u^{\epsilon}&=0 \quad\mbox{on }\partial\Omega,
\end{aligned}\end{equation*}
where $f$ is given in $H^{-1}(\Omega)$. 
\begin{example}[Periodic micro-structures \cite{A}]\label{periodic}

Let $Y$ denotes the unit cube $[0,1]^N$ in $\mathbb{R}^N$.
Let $A_Y(y)=[a^Y_{kl}(y)]_{1\leq k,l\leq N}\in \mathcal{M}(\alpha,\beta,Y)$  be such that $a^Y_{kl}(y)$ are $Y$-periodic functions
$ \forall k,l =1,2..,N.$ Now we set 
\begin{equation*}A^{\epsilon}_Y(x) = [a_{kl}^{\epsilon}(x)]= \Big[a^Y_{kl}\Big(\frac{x}{\epsilon}\Big)\Big]\end{equation*} and extend 
it to the whole $\mathbb{R}^N$ by $\epsilon$-periodicity with a small period of scale $\epsilon$ 
and then restricting $A^{\epsilon}$ in particular on $\Omega$ is known as periodic micro-structures.
\end{example}
\begin{example}[Hashin-Shtrikman micro-structures {\cite[Page no. 281]{T}}] \label{hsm}

Let $\omega \subset \mathbb{R}^N$ be a bounded open with Lipschitz boundary.
Let $A_\omega(y)=[a^\omega_{kl}(y)]_{1\leq k,l\leq N} \in \mathcal{M}(\alpha,\beta,\omega)$  
be such that after extending $A_\omega$ by $A_\omega(y) = M$ for $y \in \mathbb{R}^N\smallsetminus \omega$, where $M \in L_{+}(\mathbb{R}^N ; \mathbb{R}^N)$
(i.e. $M = [m_{kl}]_{1\leq k,l \leq N}$ is a constant positive definite $N\times N$ matrix), 
if for each $\lambda \in \mathbb{R}^N$ there exists $w_{\lambda}\in  H^{1}_{loc}(\mathbb{R}^N)$ satisfying 
\begin{equation}\label{hsw}
- div (A_\omega(y)\nabla w_{\lambda}(y)) = 0 \quad\mbox{in }\mathbb{R}^N,\quad  w_{\lambda}(y) = (\lambda, y) \quad\mbox{in }\mathbb{R}^N \smallsetminus \omega,
\end{equation}
then $A_\omega$ is said to be \textit{equivalent} to $M$.\\
\\
Then one uses a sequence of Vitali coverings of $\Omega$ by reduced copies of $\omega,$
\begin{equation}\label{hso}  meas(\Omega \smallsetminus \underset{p\in K}{\cup} (\epsilon_{p,n}\omega + y^{p,n}) = 0, \mbox{ with } \kappa_n = \underset{p\in K}{sup}\ \epsilon_{p,n}\rightarrow 0,\end{equation}
for a finite or countable $K$. These define the micro-structures in $A^{n}_\omega$. One defines for almost everywhere $x\in \Omega$, 
\begin{equation}\label{hs} A^{n}_\omega(x) = A_\omega\Big(\frac{x - y^{p,n}}{\epsilon_{p,n}}\Big) \mbox{ in } \epsilon_{p,n}\omega + y^{p,n},\quad p\in K,\end{equation}
which makes sense since, for each $n$, the sets $\epsilon_{p,n}\omega + y^{p,n},\ p\in K$ are disjoint.
The above construction \eqref{hs} represents the so called \textbf{Hashin-Shtrikman micro-structures}.
\end{example}
There is a vast work in the literature concerning with the limiting procedure
of the above equation in order to to get the homogenized coefficients.
In a large significance, Tartar's Method of oscillating
test functions \cite{MT1,MT2} and Compensated Compactness \cite{M1,T},
which he developed in large part in association with F. Murat, 
stands as a very general to take care of the limiting 
procedure which leads to $H$-convergence formulation for any arbitrary 
micro-structures. However, if the medium is periodically heterogeneous, there
is an alternate procedure to pass to the limit by using the notion asymptotic 
expansions by Bensoussan, Lions and Papanicolaou \cite{BLP}, notion of two-scale (or multi-scale) weak 
convergence by Nguetseng \cite{N} and Allaire \cite{A,A2}. If the 
solution of the boundary value problem is realized as a minimum of a suitable 
energy functional, then the convergence of the solution sequence can be reduced
to $\Gamma$-convergence of the energy functional. This idea is due to De Giorgi and his 
collaborators \cite{DM}. These above mentioned methods are usually categorized as 
Physical space methods. There are also other class of methods which are known as 
Fourier space methods and Phase space methods respectively. The phase space 
theories provide tools suitable to study the behavior
of inhomogeneous media which are qualitatively similar to homogeneous ones 
and their small perturbations. Tartar's H-measures \cite{T1,T2}, Gerard's
micro-local defect measures \cite{PG1,PG2} are known tools towards this 
aspect. And the remaining one the Fourier space methods stands in between 
these two classes of physical space methods and phase space methods. 
Driven by its own Fourier techniques, it ables to provide 
a suitable description produced by periodically modulated heterogeneities in 
the Fourier space and further, towards the limit equation for the homogenized medium. 
The periodic homogenization result was obtained by Conca and 
Vanninathan \cite{CV}, following by their works in the vibrations of fluid-solid structures
\cite{CPV} and the work by Morgan and Babu\u{s}ka 
\cite{MB1,MB2} of the Fourier analytic approach to homogenization problems.
If the medium is homogeneous, Fourier techniques have proved to
be extremely useful both analytically and numerically. 
In the same way, these techniques, if extended properly
to non-homogeneous cases, are expected to be as fruitful.
In this paper, we take an attempt to extend this method
beyond the periodic homogenization, namely to the class of generalized 
Hashin-Shtrikman micro-structures. Of course, the major difference in this is
that they do not form a periodic array in the usual sense. However,
there is still the invariance by the action of translation and dilation
groups, but the action is now different and furthermore, they don't commute each other.
This fact will be exploited to introduce Bloch waves in such non-periodic
structures. This is one of new contributions.
\section{Preliminaries}
\setcounter{equation}{0} 
Let us begin with by recalling few basic ideas behind the homogenization process. 
In particular, we draw a parallel discussion between periodic and Hashin-Shtrikman micro-structures.    
\subsection{H-Convergence}

There are various equivalent definitions of the
homogenized matrix. For general micro-structures, it is defined as
H-limit \cite{T}. If the system admits a Lagrangian density, more
precisely, if the system corresponds to Euler-Lagrange equation of a
minimization problem, then homogenized matrix is defined as
Gamma-convergence limit. In case of periodic micro-structures, the
homogenized matrix is given by an average over the periodic cell involving
the coefficients and the solutions of cell test problems. In the present
case of Hashin-Shtrikman structures, there is an alternate way of viewing the
homogenized matrix. This essential point is well-highlighted by Tartar via
the notion of a homogeneous medium being equivalent to the micro-structured
medium \cite[page no. 281]{T}. We exploit this characterization of the homogenized
matrix in our analysis.\\
\\
Let us start with recalling the definition of \textit{H-convergence}.
Considering a sequence of matrices $A^\epsilon\in \mathcal{M}(\alpha,\beta,\Omega)$ 
is said to converge in the sense of homogenization to an homogenized limit (H-limit) 
matrix $A^{*}\in \mathcal{M}(\alpha,\beta,\Omega),$  i.e. $A^{\epsilon} \xrightarrow{H} A^{*}$, if for any right hand side $f\in H^{-1}(\Omega)$, 
the sequence $u^\epsilon$ is the solution of
\begin{equation*}
\begin{aligned}       
-div(A^\epsilon(x)\mathbb\nabla u^\epsilon(x))&= f(x)\quad\mbox{in } \Omega,\\
                              u^\epsilon(x)&= 0 \quad\mbox{on }\partial\Omega
\end{aligned}
\end{equation*}
satisfies
\begin{equation*} 
\begin{aligned}
 u^{\epsilon}(x) &\rightharpoonup u(x) \quad\mbox{weakly in }H^{1}_{0}(\Omega),\\
 A^{\epsilon}(x)\nabla u^{\epsilon}(x) &\rightharpoonup A^{*}(x)\nabla u(x) \quad\mbox{weakly in }(L^{2}(\Omega))^N, 
\end{aligned}
\end{equation*}
where $u$ is the solution of the homogenized equation 
\begin{equation*} 
\begin{aligned}
-div(A^{*}(x)\nabla u(x))&= f(x) \quad\mbox{in }\Omega, \\
 u(x)&= 0  \quad\mbox{on }\partial\Omega.
\end{aligned}
\end{equation*}
The homogenized limit $A^{*}$ is locally defined and does not depend on the
source term $f$ or the boundary condition on $\partial\Omega$.\\
Let us recall the $H$-convergence limit for periodic and Hashin-Shtrikman micro-structures. 
\paragraph{Periodic homogenization:}
Here we consider $A^{\epsilon}_Y(x) = A_Y(\frac{x}{\epsilon})$, say $y=\frac{x}{\epsilon}$ and 
$A_Y(y)$ is a $Y$-periodic matrix function. \\
Then the homogenized conductivity $A^{*}_Y$ is defined by its entries
\begin{equation}\begin{aligned}\label{prr}
(A^{*}_Y)_{kl} =& \int_Y A_Y(y)(\nabla_y\chi_k+ e_k)\cdot e_l\  dy \\
             =& \int_Y A_Y(y)(\nabla_y \chi_k + e_k)\cdot (\nabla_y\chi_l +e_l)\ dy,
\end{aligned}\end{equation}
where we define  $\chi_k$ through the so-called cell-problems:
for each unit vector $e_{k}$, $\chi_k\in H^1_{\#}(Y)$ solves the following conductivity problem in the periodic unit cell: 
\begin{equation*}
-div_y(A_Y(y)(\nabla_y\chi_k(y)+e_k)) = 0 \quad\mbox{in Y,}\quad y \longmapsto\chi_k(y) \quad\mbox{is $Y$-periodic.}
\end{equation*}
\paragraph{Hashin-Shtrikman constructions:}
Here we consider 
\begin{equation*} A^{n}_\omega(x) = A_\omega\Big(\frac{x - y^{p,n}}{\epsilon_{p,n}}\Big)\quad\mbox{ in  }\epsilon_{p,n}\omega + y^{p,n}\end{equation*} 
defined through \eqref{hso} and \eqref{hs} and $A_\omega$ is equivalent to $M$. That for each  $\lambda\in\mathbb{R}^N$, there exists $w_{\lambda}\in H^1_{loc}(\mathbb{R}^N)$ defined as in \eqref{hsw} solving 
\begin{equation}\begin{aligned}\label{hsp}
-div(A_\omega\nabla w_{\lambda}(y)) &= 0 \mbox{ in }\omega, \\
 w_{\lambda}(y) - \langle\lambda, y\rangle \in H^1_0(\omega),&\quad \langle A_\omega\nabla w_{\lambda}, \nu\rangle = \langle M\lambda,\nu\rangle \mbox{ on }\partial\omega.
\end{aligned}\end{equation}
We remark that, as viewed in this manner on $\omega$, the above system is overdetermined since there exist too many boundary conditions.\\
Following that, one defines $u^{n} \in H^1(\Omega)$ by
\begin{equation}\label{uB}
v^{n}(x) = \epsilon_{p,n}w_{\lambda}\Big(\frac{x-y^{p,n}}{\epsilon_{p,n}}\Big)+ (\lambda, y^{p,n})\quad\mbox{ in  }\epsilon_{p,n}\omega + y^{p,n}.
\end{equation}
Then, one has the following convergences \cite[Page no. 283]{T}:
\begin{equation*}\begin{aligned}
& v^{n}(x) \rightharpoonup (\lambda,x) \mbox{ weakly in }H^1(\Omega;\mathbb{R}^N),\\
& A^{n}_\omega\nabla v^{n}(x)\rightharpoonup M\lambda \mbox{ weakly in } L^2(\Omega;\mathbb{R}^N),\\
& -div( A^{n}_\omega(x)\nabla v^{n}(x)) =\ 0 \quad\mbox{in }\Omega. 
\end{aligned}\end{equation*}
So, by the definition of $H$-convergence, one has the following convergence of the entire sequence
\begin{equation*} A^{n}_\omega \xrightarrow{H \mbox{ converges }} M, \end{equation*}
where $M\in L_{+}(\mathbb{R}^N,\mathbb{R}^N)$ is a positive definite matrix equivalent to $A$.\\
\\
We have the following integral representation similar to \eqref{prr}:
\begin{equation}
\begin{aligned}\label{mkl}
m_{kl} =&\ \frac{1}{|\omega|}\int_{\omega} A_\omega(y)\nabla w_{e_k}\cdot e_l\ dy \\
 =&\ \frac{1}{|\omega|}\int_{\omega} A_\omega(y)\nabla w_{e_k}\cdot\nabla w_{e_l}\ dy, 
\end{aligned}
\end{equation}
where $w_{e_k}, w_{e_l}$ are the solution of \eqref{hsp} for $\lambda= e_k$ and $\lambda=e_l$, respectively.\\
Notice that the above expressions are simply coming through doing integration by-parts in \eqref{hsp} multiplied 
by $y_l$ and $w_{e_l}(y),$ respectively. 
\begin{remark}
It can be also seen that, for every $n$, we have
\begin{equation*}
M\lambda\cdot\lambda \ = \ \frac{1}{|\Omega|} \int_{\Omega} A^{n}_\omega(x)\nabla v^{n}(x)\cdot\nabla v^{n}(x)\ dx,
\end{equation*}
because\begin{equation*}\begin{aligned}
\frac{1}{|\Omega|}& \int_{\Omega} A^{n}_\omega(x)\nabla v^{n}(x)\cdot\nabla v^{n}(x) dx \\
&= \ \frac{1}{|\Omega|}\sum_p \int_{\epsilon_{p,n}\omega +y^{p,n}} A_\omega\Big(\frac{x-y^{p,n}}{\epsilon_{p,n}}\Big)\nabla w_{\lambda}\Big(\frac{x-y^{p,n}}{\epsilon_{p,n}}\Big)\cdot\nabla w_{\lambda}\Big(\frac{x-y^{p,n}}{\epsilon_{p,n}}\Big)dx\\
&= \  \frac{1}{|\Omega|}\sum_p\epsilon_{p,n}^N \int_{\omega} A(x)\nabla w_{\lambda}\cdot\nabla w_{\lambda} dx\ =\ \frac{1}{|\omega|}\int_{\omega} A(x)\nabla w_{\lambda}\cdot\nabla w_{\lambda} dx  \mbox{ as }\sum_p \epsilon_{p,n}^N = \frac{|\Omega|}{|\omega|}.
\end{aligned}\end{equation*} 
\end{remark}

One important thing to notice is the $H$-limit does not depend on the choice of translations $y^{p,n}$ and 
the scales $\epsilon_{p,n}$ as long as they are imposed to satisfy the Vitali covering theorem's criteria \eqref{hso}. 
Where as, the higher order approximation of the medium, for example, the fourth order
approximation or \textit{Burnett coefficient}s (see \cite{TV2}), depends on the particular choice of the scales. 
\begin{figure}
 \begin{center}
  \includegraphics[width = 14cm]{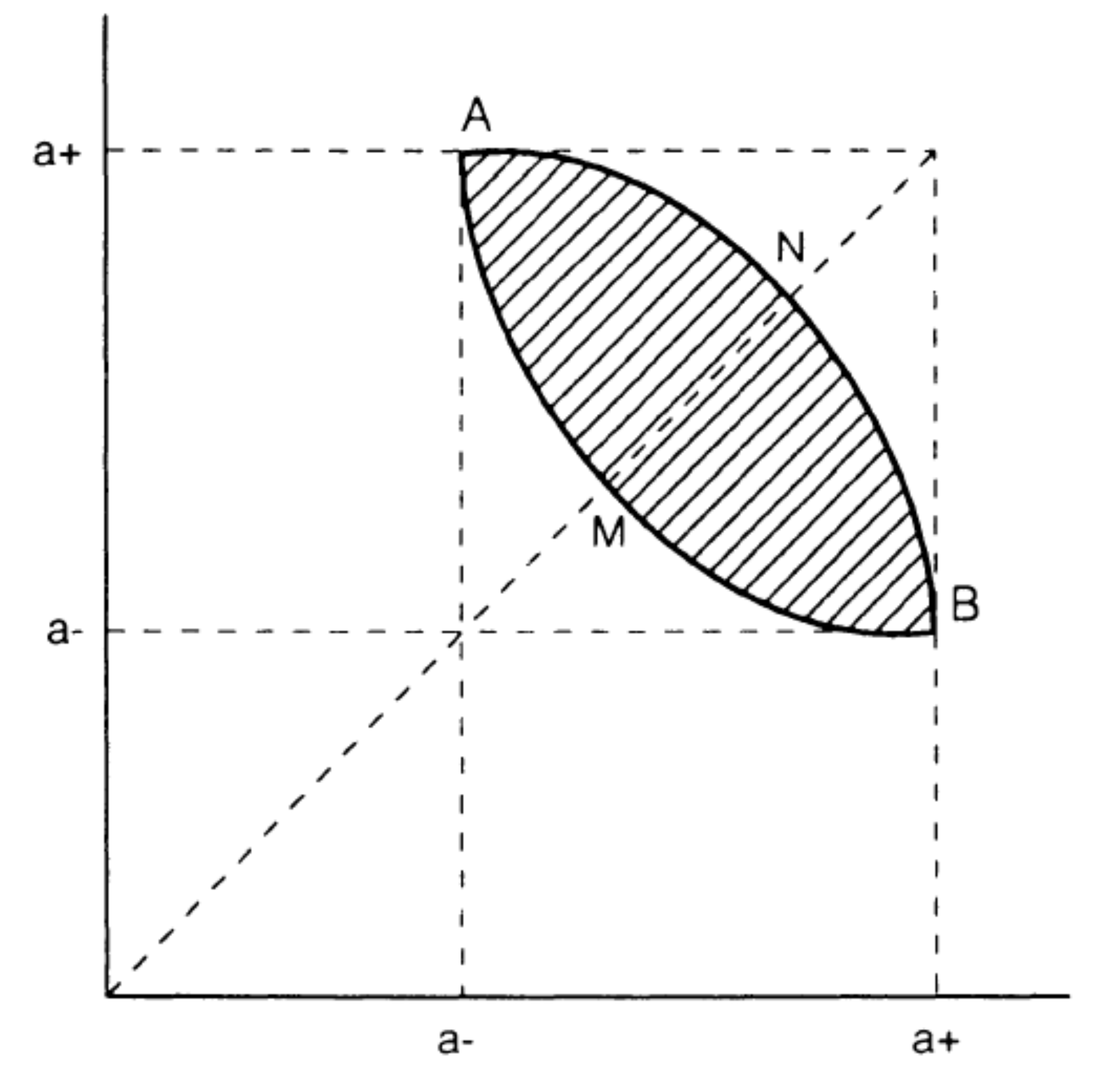}
  \vspace*{-10ex}
  \caption{Murat-Tartar Bounds.}
 \end{center}
\end{figure}
\begin{example}[Spherical Inclusions in two-phase medium]\label{si}
If $\omega= B(0,1)=\{ y\ | \ |y|\leq 1\} $ and
\begin{equation*}
\begin{aligned}
A_\omega(y)=a_B(r)I &= 
\left\{
\begin{array}{ll}
   \alpha I \quad\mbox{if } |y| \leq R,\\[1ex]
   \beta I \quad\mbox{if }  R < |y| \leq 1,
\end{array}\right.
\end{aligned}
\end{equation*}
$\alpha$ and $\beta$ are known as core and coating respectively. Then $A_\omega$ is equivalent to $\gamma I$, where $\gamma$ satisfies (see \cite{HS}): 
\begin{equation*}
\frac{\gamma - \beta}{\gamma + (N-1)\beta} = \theta \frac{\alpha - \beta}{\alpha + (N-1)\beta},\ \mbox{ where }\theta= R^N .
\end{equation*} 
\end{example}
\begin{example}[Elliptical Inclusions in two-phase medium]\label{ei}
For $m_1,..,m_N\in \mathbb{R}$ and $\rho + m_j >0 $ for $j=1,..,N$, the 
family of confocal ellipsoids $S_\rho$ of equation
$$ \sum_{j=1}^N \frac{y^2_j}{\rho + m_j} = 1,$$ defines implicitly a
real function $\rho$, outside a possibly degenerate ellipsoid in
a subspace of dimension $<$ $N$.\\
Now, if we consider $\omega=E_{\rho_2+m_1,..,\rho_2+m_N}= \Big\{ y\ | \ \sum\limits_{j=1}^N \frac{y^2_j}{\rho_2 + m_j} \leq 1\Big\},$ 
with $\rho_2 + \underset{j}{min}\ m_j >0 $  and  
\begin{align*}
 A_\omega(y)= a_E(\rho)I &=
 \left\{\begin{array}{ll} 
 \alpha I \quad\mbox{if } \rho \leq \rho_1,\\[1ex]
 \beta I \quad\mbox{if }  \rho_1 < \rho \leq \rho_2,
\end{array}\right.
\end{align*}
then $A_\omega$ is equivalent to a constant diagonal matrix $\Gamma= [\gamma_{jj}]_{1\leq j\leq N}$ satisfying
$$ \sum_{j=1}^N \frac{1}{\beta - \gamma_{jj}} =\ \frac{(1-\theta)\alpha + (N+\theta-1)\beta}{\theta \beta(\beta-\alpha)},\ \mbox{ where }\theta = \underset{j}{\Pi}\ \sqrt{\frac{\rho_1 + m_j}{\rho_2 + m_j}}.$$ 
\end{example}
\begin{remark}\label{SL}
In the problem of characterizing conductivities of 2-phase mixtures
taken in a given proportion, we know that, geometrically,
conductivities of all possible mixtures lie in a convex lens-shaped region 
bounded by an upper hyperbola and a lower hyperbola. Analytically, they satisfy
certain inequalities. The above Hashin-Shtrikman spherical inclusions 
appear among others as extremal structures $(M,N)$ (see Figure $1$) in this characterization 
in the sense that they are on the boundary of the above region, whereas the elliptical inclusions
appear on the all the points of two hyperbolas $AMB$ and $ANB$, respectively, expect the two intersection points $A$ and $B$.
The points $A$ and $B$ correspond the simple laminates which can be viewed 
as a limit of elliptical inclusions, where all of its axis goes to infinity except one axis
gives a strip structure of being infinite eccentricity.  
No wonder therefore that these structures have played important role in the construction of the above set.
\end{remark}
Before going to introduce the Bloch waves for the respective micro-structures 
here we address the interaction between adjacent elements among the micro-structures
and its importance towards fixing the state space for the Bloch waves. The main difference between periodic
and Hashin-Shtrikman structures lies in the choice of state space for ground state. Once the
right space is chosen, the procedure is quite similar to the one in the case of periodic structures except 
for technical details arising out of the difference in the state space. 
\subsection{An important comparison between periodic and Hashin-\\ Shtrikman micro-structures through its interfaces}\label{compare}

Let us start with some heuristics. Einstein principle says, 
mass creates curvature of space. Flat space with mass is equivalent
to curved space without mass. Mass is often compared with inhomogeneities
and curved space is compared with
inhomogeneous medium. Metric tensor is compared with variable coefficient
matrix. Laplace-Beltrami operator is compared with div-form operator with
variable coefficients.
We now consider a cover of the domain $\Omega$ by disjoint sets $\omega_p$ 
$(p\in\mathbb{N})$ up to a null set. If $\Omega$ lies in a flat Euclidean 
space, then the above open cover remains intact without any interaction 
between adjacent cells $\omega_p$, even though they touch each other.
Now, imagine that $\Omega$ is on curved space, say unit sphere. This amounts
to pulling the network of $\omega_p$ in a certain direction so that they
form a cover of the unit sphere. It is intuitively obvious that in order 
the cover be intact without collapsing/breaking, then necessarily, there has 
to be interaction between adjacent cells $\omega_p$. That is, adjacent balls 
exert force on each other. Analogously, if the balls are homogeneous, there 
is no interaction between them. On the other hand, if the balls represent 
inhomogeneity there has to be interaction between them.\\
\\
$\textbf{(1a):} \ $ As we see the periodic structures admit an invariance group, 
namely the additive group of integers which acts on the Euclidean
space by translation. Starting from one basic cell $Y$, we can tile 
the whole Euclidean space by its integer translates, i.e.
given $f=f(y)\in L^2 (Y)$, we can produce an element $F\in
L^2(\Omega)$ by periodically extending $f$ throughout $\mathbb{R}^N$. 
We say $F$ is $Y$-periodic extension of $f$.\\
$\textbf{(1b):} \ $ In the case of Hashin-Shtrikman structures, 
let us assume for simplicity the basic cell is a ball $B=B(0,1)$. Using translation group, we may move the
centers of the ball but does not generate a tiling of the whole
space. Similarly, the scaling group acts on the radius of the ball but again
it does not generate a tiling either. We need both the groups to generate
a tiling. Even though each one of the groups is commutative, but their
combination is not. We consider now a Vitali covering of $\Omega$
(up to a null set) by balls which are translated and scaled versions of 
the unit ball and denote them by $B_p= B(y^p,\epsilon_p)$. 
Let us imitate the previous construction. We start with $f=f(y)\in L^2(B)$.
We may define a translated and scaled version of $f$ on the ball $B_p$
denoted by $\widetilde{f}_p(x) = f(\frac{x-y^p}{\epsilon_p}),$ when $x\in B_p$.
Putting these versions together, we have a function 
$\widetilde{F}=\widetilde{F}(x)$ with $\widetilde{F}|_{B_p} = \widetilde{f}_p$ 
defined a.e. on $\Omega$. Then $\widetilde{F}\in L^2(\Omega)$.\\
\\
$\textbf{(2a):} \ $ Let us now consider Sobolev space $H^1$. Assume $f\in H^1(Y)$. 
It is classical that $F\in H^1(\Omega)$ if and only if
the trace of $f$ on the boundary of $Y$
are equal on opposite faces of the boundary. This later condition defines a
new space denoted by $H^1_{\#}(Y)$. This is one of the ways in which the space
$H^1_{\#}(Y)$ arises in periodic homogenization.\\
$\textbf{(2b):} \ $Going back to the construction in $\textbf{(1b)}$, we ask, 
under what condition, $\widetilde{F}\in H^1(\Omega)$ provided $f\in H^1(B)$?
Such questions are motivated by homogenization: starting from cell test
functions, we wish to construct oscillating approximation to physical
fields in $\Omega$ with finite energy and so they are usually in Sobolev
spaces. There is no doubt that $\widetilde{f}_p\in H^1(B_p),$ for each $p$. 
Since each ball $B_p$ touches its neighbors at a single point, 
it is clear that $\widetilde{F}$ is in $H^1$ in the open domain formed by the 
disjoint union of finitely many balls from the collection $B_p$.
Let us denote by $\widetilde{\Omega}$, the open set formed by the disjoint union of all
balls $B_p$.\\
\\
\textbf{Question:} Does $\widetilde{F}\in H^1(\widetilde{\Omega})$ ?\\
We may think that the answer is affirmative. This is wrong.
There is trouble because of the presence of small scales in the Vitali
covering. More precisely, $\nabla_x \widetilde{f}_p(x) = {\epsilon_p}^{-1}\nabla_yf(y).$
Since $\epsilon_p$ tends to zero, these negative powers pose
difficulties. In the case of periodic structures, the number of small cells
inside $\Omega$ is finite and so this difficulty does not arise.
In order to overcome this difficulty, we normalize $\widetilde{f}_p$ and define $f_p(x)$
by $\epsilon_p\widetilde{f}_p(x)$. As before, putting these $f_p$
together, we define $F=F(x)$ with $\widetilde{F}|_{B_p} = f_p$ a.e. on $\widetilde{\Omega}$.
This normalization does not alter the $L^2$-ness of $F$. But
now, computing the gradients, we have, $\nabla_x f_p(x) = \nabla_y f(y)$ on 
$B_p$ and there are no negative powers of $\epsilon_p$. Putting together these
gradients, we get an $L^2(\widetilde{\Omega})$ function. This proves $F$ is in $H^1(\widetilde{\Omega})$.\\
\\
\textbf{Question:} Is $F$ in $H^1(\Omega)$?\\
Answer: No, in general.
This answer may surprise a few, at first sight. One may think that, since
$\Omega$ and $\widetilde{\Omega}$ differ by a null set and since $F$ is in $H^1(\widetilde{\Omega})$,
the answer is affirmative. But this is wrong. In fact, the answer is affirmative
if and only if the trace of $f$ on the boundary of $B$ vanishes, i.e. $f\in H^1_0(B)$
This is again due to presence of infinite scales in the Vitali
covering: any ball in the covering has neighbors whose radii tend to zero.
Appearance of Dirichlet boundary condition is imposed by the Hashin-Shtrikman
micro-structure (see Appendix Lemma \ref{A1} for the proof ).
Thus, we see that the space $H^1_0(B)$ replaces $H^1_{\#}(Y),$
when we pass from periodic to Hashin-Shtrikman micro-structures.
\hfill
\qed
\\
\\
We will end this section with a brief survey of introducing Bloch waves in the periodic micro-structures.


\subsection{Survey of Bloch waves, Bloch eigenvalues and eigenvectors in periodic structures}\label{survey}

Introduction of Bloch waves in periodic structures is based on Floquet
principle: periodic structures can be regarded as multiplicative
perturbations of homogeneous media by periodic functions. This principle
gave rise to a new class of functions, namely $(\eta, Y)-$periodic functions.
\begin{align*}
\psi(.;\eta) \mbox{ is }(\eta;Y)-\mbox{periodic if }
\psi(y+2\pi m;\eta) = e^{2\pi i m.\eta}\psi(y;\eta) \quad \forall m \in \mathbb{Z}^N , y \in \mathbb{R}^N.
\end{align*}
Accordingly, the following spaces were used: $L^2_{\#}(\eta,Y),
L^2_{\#}(Y),H^1_{\#}(\eta,Y),H^1_{\#}(Y),$ etc. 
\begin{align*}
  L^2_{\#}(\eta;Y) &= \{ v\in L^2_{loc}(\mathbb{R}^N)\ |\   v \mbox{ is }(\eta,Y)-\mbox{periodic}\},\\
  H^1_{\#}(\eta; Y) &= \{ v\in H^1_{loc}(\mathbb{R}^N)\ |\ v \mbox{ is }(\eta,Y)-\mbox{periodic}\}.
\end{align*}
The $(\eta,Y)-$periodicity condition remains unaltered if we
replace $\eta$ by $(\eta + q)$ with $q \in\mathbb{Z}^N$, so $\eta$ 
can therefore be confined to the dual cell $\eta \in Y^{\prime} = [-\frac{1}{2},\frac{1}{2}]^N$
or equivalently dual torus. $\eta=0$ gives back the usual periodicity condition.\\
In fact, these classes are state spaces for Bloch waves.
The link between `Bloch waves' and the traditional `Homogenization theory' is as
follows: the homogenized tensor and the cell test functions can be
obtained as infinitesimal approximation from (ground state) Bloch waves at $\eta =0$
and its energy.\\
\\
We consider the operator 
\begin{equation*} \mathcal{A}_Y \equiv -\frac{\partial}{\partial y_k}\lb a^Y_{kl}(y)\frac{\partial}{\partial y_l} \rb \quad k,l=1,2..,N,\end{equation*}
where the coefficient matrix  $A_Y(y) = [a^Y_{kl}(y)]$ defined on $Y$ a.e., where $Y =[0,1]^N$ is known as the periodic cell
and $A_Y\in\mathcal{M}(\alpha,\beta;Y)$ for some $0<\alpha <\beta$, i.e. 
\begin{equation}a^Y_{kl}=a^Y_{lk}\hspace{5pt}\forall k,l \mbox{ and }\ (A_Y(y)\xi,\xi) \geq \alpha|\xi|^2,\ \ |A_Y(y)\xi|\leq \beta|\xi|\mbox{ for any } \xi \in \mathbb{R}^N,\mbox{ a.e. on }Y.\end{equation}
We define Bloch waves $\psi$ associated with the operator 
$\mathcal{A}_Y$ as follows. Let us consider the following spectral problem 
parameterized by $\eta \in \mathbb{R}^N$: \\
\\
Find $\zeta = \zeta(\eta) \in \mathbb{R}$ 
and $\psi_Y = \psi_Y(y; \eta)$ (not identically zero) such that
\begin{equation}\label{b1}
\mathcal{A}_Y\psi_Y(\cdot;\eta) =\zeta(\eta)\psi_Y(\cdot;\eta) \quad\mbox{in }\mathbb{R}^N,\ \ \ \psi_Y(\cdot;\eta) \mbox{ is }(\eta;Y)-\mbox{periodic.}
\end{equation}
By applying the Floquet principle, we define $\varphi_Y(y; \eta) = e^{-iy·\eta} \psi_Y(y; \eta)$  and then \eqref{b1} can be rewritten in terms of 
$\varphi_Y$ as follows:
\begin{equation}\label{ps}
\mathcal{A}_Y(\eta)\varphi_Y = \zeta(\eta)\varphi_Y \mbox{ in }\mathbb{R}^N, \quad \varphi_Y \mbox{ is $Y-$periodic,}
\end{equation}
where the operator $\mathcal{A}_Y(\eta)$  is defined by  
\begin{equation}\label{ae}
\mathcal{A}_Y(\eta) = -\Big(\frac{\partial}{\partial y_k} + i\eta_k\Big)\Big[a^Y_{kl}(y)\Big(\frac{\partial}{\partial y_l} + i\eta_l\Big)\Big].
\end{equation}
It is well known from \cite{CPV} that for each $\eta\in Y^{\prime}$ ,
the above spectral problem admits a discrete sequence of eigenvalues with the following properties:
\begin{equation*}
0 \leq \zeta_1 (\eta) \leq ... \leq \zeta_m (\eta) \leq ... \rightarrow \infty, \mbox{ and } \forall\ m \geq 1,\ \zeta_m(\eta)\mbox{ is a Lipschitz function of }\eta \in Y^{\prime}.
\end{equation*}
The corresponding eigenfunctions denoted by $\psi_{Y,m}(\cdot; \eta)$ and $\varphi_{Y,m} (\cdot; \eta)$ 
form orthonormal bases in the spaces of all $L^2_{loc} (\mathbb{R}^N )$
functions which are $(\eta; Y )-$periodic and $Y$-periodic, respectively. 
In fact, these eigenfunctions belong to the spaces $H^1_{\#}(\eta; Y)$ and $H^1_{\#}(Y),$ respectively.\\
\\
To obtain the spectral resolution of $\mathcal{A}^{\epsilon}_Y$ in an analogous manner, 
let us introduce Bloch waves at the $\epsilon$-scale:
$$\zeta^{\epsilon}_m(\xi) = \epsilon^{-2}\zeta_m(\eta),\quad \varphi^{\epsilon}_{Y,m}(x;\xi) = \varphi_{Y,m} (y;\eta),
\ \ \psi^{\epsilon}_{Y,m}(x;\xi) = \psi_{Y,m}(y; \eta)$$
where the variables $(x, \xi)$ and $(y;\eta)$ are related by $y = \frac{x}{\epsilon}$ and $\eta =\epsilon\xi$.
Observe that $\varphi^{\epsilon}_{Y,m}(x;\xi)$ is $\epsilon Y$-periodic (in $x$) and $\epsilon^{-1}Y^{\prime}$-periodic 
with respect to $\xi$. In the same manner, $\psi^{\epsilon}_{Y,m}(\cdot; \xi)$ is $(\epsilon\xi; \epsilon Y )-$periodic. 
The dual cell at $\epsilon$-scale is $\epsilon^{-1}Y^{\prime}$, where $\xi$ varies.\\
The functions $\psi_{Y,m}^{\epsilon}$ and $\varphi_{Y,m}^{\epsilon}$ 
(referred to as Bloch waves) enable us to describe the spectral resolution of $\mathcal{A}^{\epsilon}_Y$ 
(an unbounded self-adjoint operator in $L^2 (\mathbb{R}^N)$) in the orthogonal basis 
$\{e^{ix\cdot\xi} \varphi_{Y,m}^{\epsilon}(x; \xi)\ |\ m \geq 1, \xi \in \epsilon^{-1}Y^{\prime} \}$. 
More precisely, we have the following result:\begin{proposition}[Bloch decomposition \cite{CV}]\label{deA}
Let $g \in L^2(\mathbb{R}^N)$. The $m$-th Bloch coefficient of $g$ at the $\epsilon$-scale is
defined as follows:
\begin{equation}\label{bt}(B^{\epsilon}_m g)(\xi) = \int_{\mathbb{R}^N} g(x)e^{-ix\cdot\xi} \overline{\varphi^{\epsilon}}_{Y,m}(x;\xi)dx \quad \forall m \geq 1,\ \xi \in \epsilon^{-1}Y^{\prime}.\end{equation}
Then the following inverse formula holds:
\begin{equation}\label{ibt}g(x) = \int_{\epsilon^{-1}Y^{\prime}}(B^{\epsilon}_m g)(\xi)e^{ix\cdot\xi} \varphi^{\epsilon}_{Y,m}(x; \xi)d\xi.\end{equation}
And the Parseval's identity:
$$\int_{\mathbb{R}^N}|g(x)|^2 dx = \int_{\epsilon^{-1}Y^{\prime}}\sum_{m=1}^{\infty}|(B^{\epsilon}_m g)(\xi)|^{2} d\xi.$$
Finally, for all $g$ in the domain of $\mathcal{A}^{\epsilon}$, we have
$$\mathcal{A}^{\epsilon}g(x) = \int_{\epsilon^{-1}Y{\prime}}\sum_{m=1}^{\infty}\zeta^{\epsilon}_m(\xi)(B^{\epsilon}_m g)(\xi)e^{ix\cdot\xi} \varphi^{\epsilon}_{Y,m}(x;\xi)d\xi,$$
i.e. $\{ e^{ix\cdot\xi}\varphi_{Y,m}^{\epsilon}(x;\xi);\ m=1,\ldots,N, \ \xi\in\epsilon^{-1}Y^{\prime}\}$ is a basis for $L^2(\mathbb{R}^N)$. 
\hfill\qed\end{proposition}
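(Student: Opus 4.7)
The plan is to view the statement as the unitarity and spectral--intertwining property of the Bloch (Floquet--Gelfand) transform attached to $\mathcal{A}^\epsilon_Y$. First I would reduce to the case $\epsilon=1$: under the change of variables $y=x/\epsilon$, $\eta=\epsilon\xi$, the operator $\mathcal{A}^\epsilon_Y$ becomes $\epsilon^{-2}\mathcal{A}_Y$, the cells become $\epsilon Y$ and $\epsilon^{-1}Y'$, the Jacobians in $dx$ and $d\xi$ match, and the pointwise identifications $\varphi^\epsilon_{Y,m}(x;\xi)=\varphi_{Y,m}(y;\eta)$, $\zeta^\epsilon_m(\xi)=\epsilon^{-2}\zeta_m(\eta)$ transport every unit-scale identity to scale $\epsilon$.

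At the unit scale two ingredients combine. The first is a plain Floquet decomposition: for $g$ in the Schwartz class $\mathcal{S}(\mathbb{R}^N)$ (dense in $L^2$), set
\begin{equation*}
(Tg)(y;\eta)=\sum_{m\in\mathbb{Z}^N} g(y+m)\,e^{-i(y+m)\cdot\eta},
\end{equation*}
so that $(Tg)(\cdot;\eta)$ is $Y$-periodic in $y$ and $(Tg)(y;\eta+q)=e^{-iy\cdot q}(Tg)(y;\eta)$ for $q$ in the dual lattice; equivalently $e^{iy\cdot\eta}(Tg)(y;\eta)$ is $(\eta;Y)$-periodic in $y$. Orthonormality of $\{e^{im\cdot\eta}\}_{m\in\mathbb{Z}^N}$ on $L^2(Y')$ yields at once the Parseval relation $\int_{\mathbb{R}^N}|g|^2\,dx=\int_{Y'\times Y}|(Tg)(y;\eta)|^2\,dy\,d\eta$ and the inversion $g(y)=\int_{Y'}e^{iy\cdot\eta}(Tg)(y;\eta)\,d\eta$. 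A density argument extends this to all $g\in L^2(\mathbb{R}^N)$ and identifies $L^2(\mathbb{R}^N)$ with the direct integral $\int^{\oplus}_{Y'}L^2_\#(Y)\,d\eta$.

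The second ingredient, recalled just before the statement, is that for each $\eta\in Y'$ the family $\{\varphi_{Y,m}(\cdot;\eta)\}_{m\geq 1}$ is an orthonormal basis of $L^2_\#(Y)$. Expanding $(Tg)(\cdot;\eta)$ in this basis gives coefficients $\int_Y(Tg)(y;\eta)\,\overline{\varphi_{Y,m}(y;\eta)}\,dy$, which after unfolding the sum in $Tg$ into an integral over $\mathbb{R}^N$ are identified with $(B_m g)(\eta)$ from the statement. Substituting back into the Floquet inversion yields the claimed inversion formula, and combining the fibrewise Parseval with the one for $T$ gives the full Parseval identity. For $g\in\mathrm{Dom}(\mathcal{A}^\epsilon)$ the spectral representation follows by bringing $\mathcal{A}_Y$ under the integral sign and invoking the eigenvalue equation $\mathcal{A}_Y\psi_{Y,m}(\cdot;\eta)=\zeta_m(\eta)\psi_{Y,m}(\cdot;\eta)$.

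The main obstacle, in my view, is the rigorous justification of the interchange of sums and integrals (handled by a truncation and dominated-convergence argument on $\mathcal{S}$ followed by passage to $L^2$ by density), together with the measurable choice of eigenfunctions $\varphi_{Y,m}(\cdot;\eta)$ as $\eta$ varies, needed so that $\xi\mapsto(B_m g)(\xi)$ is a bona fide measurable function and the direct-integral decomposition is well posed. The classical route, followed in \cite{CV,CPV}, is to use the compactness of the resolvent of $\mathcal{A}_Y(\eta)+I$ on $H^1_\#(Y)$ together with a Kato--Rellich-type analytic-perturbation argument in $\eta$; once these points are settled, the remainder is a bookkeeping exercise assembling the two ingredients above.
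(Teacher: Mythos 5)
This proposition is stated in the paper without any proof---it is recalled verbatim from \cite{CV} as part of the survey of periodic Bloch theory---and your sketch reproduces exactly the classical Floquet--Bloch argument (rescaling to the unit cell, Gelfand/Floquet direct-integral decomposition of $L^2(\mathbb{R}^N)$, fibrewise expansion in the orthonormal eigenbasis of $\mathcal{A}_Y(\eta)$, then density and the eigenvalue equation for the spectral formula) on which that citation rests, so your proposal is correct in outline and matches the paper's source. The only caveat is a normalization bookkeeping point: with the paper's cells $Y=[0,1]^N$ and $Y^{\prime}=[-\frac{1}{2},\frac{1}{2}]^N$, the exponentials entering your Floquet transform must carry the factor $2\pi$ (i.e. $e^{2\pi i m\cdot\eta}$, or else the dual cell must be $[-\pi,\pi]^N$) for the orthonormality on $L^2(Y^{\prime})$ that you invoke, a discrepancy already present in the paper's own statement of $(\eta;Y)$-periodicity.
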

\noindent 
Using the above proposition, the classical homogenization result was deduced in \cite{CV}. 
We consider a sequence $u^{\epsilon} \in H^1(\mathbb{R}^N)$ satisfying 
\begin{equation}\label{ed1}\mathcal{A}^{\epsilon}_Yu^{\epsilon} = f \quad\mbox{ in }\mathbb{R}^N, \end{equation} 
with the fact $u^{\epsilon}\rightharpoonup u$ in $H^{1}(\mathbb{R}^N)$ 
weak and $u^{\epsilon}\rightarrow u $ in $L^{2}(\mathbb{R}^N)$ strong.\\
The homogenization problem consists of passing to the limit, 
as $\epsilon \rightarrow 0$ in \eqref{ed1}, we get the homogenized equation 
satisfied by $u$, namely
\begin{equation*} \mathcal{A}^{*}_Yu =\ -\frac{\partial}{\partial x_k}\lb q_{kl}\frac{\partial u}{\partial x_l} \rb = f \quad\mbox{in }\mathbb{R}^N, \end{equation*}
where $A^{*}_Y=[q_{kl}]$ is the constant homogenized matrix \cite{A}.\\

Simple relation linking $A^{*}_Y=[q_{kl}]$ with Bloch waves is the following: 
$q_{kl}=\ \frac{1}{2}D^2_{kl}\zeta_1(0)$ (see \cite{CV}).
At this point, it is appropriate to recall that derivatives of the first eigenvalue
and eigenfunction at $\eta=0$ exist, thanks to the regularity property established 
in \cite{CV}. In fact, we know that there exists $\delta \geq 0$ such that the first 
eigenvalue $\zeta_1(\eta)$ is an analytic function on $B_{\delta}(0) = \{\eta\in\mathbb{R}^N \ |\ |\eta| < \delta\}$
and there is a choice of the first eigenvector $\varphi_{Y,1}(y;\eta)$ satisfying 
$$\eta \mapsto \varphi_{Y,1}(\cdot;\eta) \in H^1_{\#}(Y)\mbox{ is analytic on } B_{\delta}\quad\mbox{ and }\ \varphi_{Y,1}(y;0)= |Y|^{-1/2}.$$
\noindent
\\
We wish to carry out an analogous program for the generalized Hashin-Shtrikman structures.
\section{Bloch spectral analysis in the class of Hashin-Shtrikman micro-structures} 
\setcounter{equation}{0} 
\subsection{State space of Bloch Ground state on Hashin-Shtrikman structures}
What is the state space of Bloch waves on Hashin-Shtrikman structures? 
We assume Bloch waves on Hashin-Shtrikman structures still obey 
Floquet principle. As a consequence, because of
Dirichlet boundary condition (cf . [\textbf{(2b)}, Section\eqref{compare}])
we see that the state space for Bloch waves with momentum $\eta$ is
still $H^1_0(B)$ which is the state space for zero-momentum Bloch waves. 
There is no possibility of raising the energy of the ground state.
There is no effect of $\eta$ at all. To overcome the above difficulty, our proposal is the following: we keep
the Floquet principle intact and we change the state space from $H^1_0(B)$
to $H^1_c(B)$, which is the subspace of $H^1(B)$ whose boundary trace is a
constant (depending on the function). Compared with $H^1_0(B)$, the energy of
the ground state is now lowered. Since homogenization is a lower energy
approximation, one feels that $H^1_c(B)$ is more appropriate than
$H^1_0(B)$. Obviously, there is now the effect of $\eta$, which we will exploit
in the sequel. It should be remarked that this energy is obviously higher than
the ground energy in $H^1(B)$. Let us recall that the latter is the state
space for Neumann boundary condition. For various reasons, Bloch waves
with Dirichlet and Neumann boundary conditions do not yield desirable results. 
That is why, these boundary conditions are rejected.
\begin{remark}
Caution: By translation and scaling, we cannot produce a function $F$ in
$H^1(\Omega)$ starting from an element in $H^1_c(B)$.
\end{remark}
\begin{remark}
To get the motivation to fix the state space, we begin with the discussion by considering the ball $B$ 
as an example of basic-cell. One can retrace the same above arguments under the hypothesis `$A$ is equivalent 
to $M$' in a bounded open set $\omega\subset \mathbb{R}^N$ whenever non-uniform scales and translations are involved. 
\end{remark}
\noindent
We will be able to define a sequence of Bloch waves in the above state space
associated to the div-form operator on Hashin-Shtrikman structures. 
We are not able to show that they form an orthonormal basis diagonalizing the
operator under consideration. Surprisingly, we are able to show that the
associated ground state and its energy have some desired properties
described below.\\
More precisely, we show that one-half of the Hessian of the ground energy
is a scalar and it coincides with the homogenized coefficient of the 
Hashin-Shtrikman structures. In particular, this shows the usual spectral 
characterization holds for the homogenized coefficient of Hashin-Shtrikman
structures. Secondly, we can go beyond the homogenization approximation 
for the acoustic waves propagating on Hashin-Shtrikman micro-structured medium.
One knows that the situation in a general micro-structured medium is pretty 
complicated to describe beyond homogenization level. However, thanks to the 
introduction of Bloch waves, we can define the next order approximation beyond 
homogenization. This is achieved by the introduction of a certain 4-tensor 
``$d$''. This captures in a quantitative way the dispersion undergone by these
propagating waves. Thirdly, we make a link between this ``$d$'' and the similar 
$4$-tensor already introduced in the case of periodic structures and denoted 
by ``$d_Y$". This link enables us to prove a conjecture based on numerics 
concerning the behavior of ``$d_Y$" on periodic structures: 
``$d_Y$" attains its minimum value among all periodic Hashin-Shtrikman
micro-structures at periodic Apollo Hashin-Shtrikman structure \cite{TV2}. 
Perhaps, this last result provides solid justification of our
definition of Bloch waves on Hashin-Shtrikman structures.
\subsection{Bloch waves, Bloch eigenvalues and eigenvectors in the Hashin-Shtrikman structures}\label{nl9}

Let $\omega \subset \mathbb{R}^N$ be a bounded open with Lipschitz boundary and $A_\omega(y)=[a^\omega_{kl}(y)]_{1\leq k,l\leq N}
\in \mathcal{M}(\alpha,\beta,\omega)$. 
We consider the following spectral problem parameterized by $\eta \in \mathbb{R}^N$: Find $\lambda := \lambda(\eta) \in \mathbb{C}$ and $\varphi_\omega := \varphi_\omega(y; \eta)$ (not identically zero) such that
\begin{equation}\begin{aligned}\label{NE1}
-\Big(\frac{\partial}{\partial y_k} + i\eta_k\Big)\Big[a^\omega_{kl}(y)(\frac{\partial}{\partial y_l} + i\eta_l)\Big]\varphi_\omega(y;\eta) &= \lambda(\eta)\varphi_\omega(y;\eta) \mbox{ in }\omega, \\
\varphi_\omega(y;\eta)\mbox{ is constant on }\partial\omega,  \\
\int_{\partial\omega} a^\omega_{kl}(y)(\frac{\partial}{\partial y_l} + i\eta_l)\varphi_\omega(y;\eta)\nu_k\ d\sigma &= 0. 
\end{aligned}\end{equation}
(where $\nu$ is the outer normal unit vector on the boundary and $d\sigma$ is the surface measure on $\partial\omega$).
\paragraph{Weak formulation:}
Here first we introduce the function spaces
\begin{align*}
L^2_c(\omega) &=  \ \{\varphi\in L^2_{loc}(\mathbb{R}^N) \  | \ \varphi \mbox{ is constant in } \mathbb{R}^N \smallsetminus\omega \},\\
H^1_c(\omega) &=\ \{\varphi\in H^1_{loc}(\mathbb{R}^N) \  | \ \varphi \mbox{ is constant in } \mathbb{R}^N \smallsetminus\omega \},\\
              &= \ \{\varphi\in H^1(\omega) \  | \ \varphi|_{\partial\omega} = \mbox{constant} \}.
\end{align*}
Here “$c$” is a floating constant depending on the element under consideration.\\
Notice that $L^2_c(\omega)$ and $H^1_c(\omega)$ are proper subspace of $L^2(\omega)$ and  $H^1(\omega)$ respectively, and they inherit
the subspace norm-topology of the parent space. \\
In a similar fashion, we define for $\eta\in \mathbb{R}^N$:
\begin{align*}
&L^2_{c}(\eta;\omega) = \ \{\varphi \in L^2_{loc}(\mathbb{R}^N) \  | \ e^{-iy\cdot \eta}\varphi \mbox{ is constant in } \mathbb{R}^N \smallsetminus\omega \},\\
&H^1_c(\eta;\omega) =\ \{\varphi\in H^1_{loc}(\mathbb{R}^N) \  | \ e^{-iy\cdot \eta}\varphi \mbox{ is constant in } \mathbb{R}^N \smallsetminus\omega \}.
\end{align*}
The next step is to give a weak formulation of the problem in these function spaces. To this
end, let us introduce some bilinear forms:
\begin{align*}
a_\omega(u,w) &= \ \int_\omega a^\omega_{kl}(y)\frac{\partial u}{\partial y_l}\overline{ \frac{\partial w}{\partial y_k}}dy,\\
a_\omega(\eta)(u,w) &= \ \int_\omega a^\omega_{kl}(y)\lb\frac{\partial u}{\partial y_l} +i\eta_l v \rb \overline{ \lb \frac{\partial w}{\partial y_k} + i\eta_k w \rb }dy,\\
(u,w) &= \ \int_\omega u\overline{w} dy. 
\end{align*}
Based on these above bilinear forms we are interested into proving the existence 
of the eigenvalue and the corresponding eigenvector $(\lambda(\eta), \varphi(y;\eta))$
with $\lambda(\eta)\in\mathbb{C}$ and $\varphi(.;\eta) \in H^1_c(\omega)$ of the following weak formulation of  \eqref{NE1}:
\begin{equation}\label{nl17}
a_\omega(\eta)(\varphi_\omega(y;\eta), \psi) = \lambda(\eta)(\varphi_\omega(y;\eta),\psi) \quad \forall \psi\in H^1_c(\omega).
\end{equation}
\paragraph{Existence Result:}
We start with the following parameterized boundary value problem:
\\
\\
Given $F\in L^2_c(\omega)$, find $u\in H^1_c(\omega)$ satisfying 
\begin{equation}\label{fe1}
 a_\omega(\eta)(u,v) = (F,v) \quad \forall v \in H^1_c(\Omega).
\end{equation}
As we see that for each $\eta $ fixed, the bilinear form 
\begin{equation*}a_\omega(\eta) : H^1_c(\omega) \times H^1_c(\omega) \rightarrow \mathbb{C} \quad\mbox{is continuous.}\end{equation*}
And if $|\eta|$ is bounded, then for some constant $C$ large enough, it satisfies the following ellipticity property of 
\begin{equation}\label{ellip} \ a_\omega(\eta)(v,v) + C||v||^2 \geq \ \frac{\alpha}{2} \{ ||\nabla v||^2 + ||v||^2  \} \quad \forall v \in H^1_c(\omega).\end{equation}
The continuity part follows trivially, the ellipticity follows as
\begin{equation*}\begin{aligned}
a_\omega(\eta)(v,v) = &\ \int_{\omega} A_\omega(y)(\nabla + i\eta)v\cdot\overline{(\nabla + i\eta)v}\ dy\\
                               \geq & \ \alpha (||\nabla v||^2 + |\eta|^2||v||^2) - 2\beta||\nabla v||\cdot|\eta|||v|| \\
                               \geq & \ \alpha (||\nabla v||^2 + |\eta|^2||v||^2) - \beta\lb \frac{\alpha}{2\beta} ||\nabla v||^2 + \frac{2\beta}{\alpha}|\eta|^2||v||^2 \rb, 
\end{aligned}\end{equation*}
where in the above step we use the Cauchy-Schwarz inequality. Then choosing a constant $C \geq \frac{\alpha}{2} + (\frac{2\beta^2}{\alpha} - \alpha)|\eta|^2$, we get \eqref{ellip}.  \\
Thus, whenever $|\eta|$ is bounded, we can apply Lax-Milgram Lemma and get the solvability of \eqref{fe1}.
To solve the corresponding spectral problem \eqref{nl17}, we consider the corresponding Green’s operator
\begin{align*}
&G_\omega(\eta) : L^2_c(\omega) \rightarrow H^1_c(\omega) \hookrightarrow L^2_c(\omega),\\ 
&G_\omega(\eta)F =\ u. 
\end{align*}
Since the inclusion $H^1_c(\omega) \hookrightarrow L^2_c(\omega)$ is compact, $G_\omega(\eta)$ is a compact operator whenever $|\eta|$ is bounded
and further $G_\omega(\eta)$ is self-adjoint which essentially comes from the symmetry of $a^\omega_{kl} = a^\omega_{lk}$. Thus, applying the spectral theory
of compact self-adjoint operators, we arrive at:
\begin{theorem}\label{deH}
Fix $\eta \in \mathbb{R}^N$. Then there exist a sequence of eigenvalues $\{ \lambda_m(\eta); m \in \mathbb{N} \}$
and corresponding eigenvectors $\{ \varphi_{\omega,m} (y;\eta)\in H^1_c(\omega), m \in \mathbb{N} \}$ such that
\begin{align*}
&(i) \ \mathcal{A}_\omega(\eta)\varphi_{\omega,m}(y;\eta) = \lambda_m(\eta)\varphi_{\omega,m}(y;\eta) \quad \forall m \in \mathbb{N}.\\ 
&(ii) \ 0 \leq \lambda_1(\eta) \leq \lambda_2 (\eta) \leq ... \rightarrow \infty. \ \mbox{ Each eigenvalue is of finite multiplicity. }\\
&(iii)\ \{\varphi_{\omega,m}(\cdot;\eta);\ m \in  \mathbb{N} \}\ \mbox{ is an orthonormal basis for }L^2_c(\omega).\\
&(iv)\ \mbox{ For }\varphi \mbox{ in the domain of } \mathcal{A}_\omega(\eta), \mbox{ we have }
\mathcal{A}_\omega(\eta)\varphi(y) = \sum_{m=1}^{\infty} \lambda_m (\eta)(\varphi, \varphi_{\omega,m} (\cdot;\eta))\varphi_{\omega,m} (y;\eta).
\end{align*}
\end{theorem}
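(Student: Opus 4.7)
The plan is to realize the spectral problem \eqref{nl17} as an eigenvalue problem for the Green's operator $G_\omega(\eta)$ on the Hilbert space $L^2_c(\omega)$, show this operator is compact and self-adjoint, and then invoke the classical Hilbert--Schmidt spectral theorem; the four conclusions of the theorem will then follow by translating back.

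First I would check that $H^1_c(\omega)$ is a closed subspace of $H^1(\omega)$, hence a Hilbert space with the inherited norm. Indeed it is the preimage, under the continuous trace map $H^1(\omega)\to H^{1/2}(\partial\omega)$, of the one-dimensional subspace of constant traces; the analogous statement for $L^2_c(\omega)\subset L^2(\omega)$ is immediate. By Rellich--Kondrachov the embedding $H^1(\omega)\hookrightarrow L^2(\omega)$ is compact, and restricting to closed subspaces preserves compactness, so $H^1_c(\omega)\hookrightarrow L^2_c(\omega)$ is compact as well.

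Next, using the continuity of $a_\omega(\eta)$ and the coercivity estimate \eqref{ellip} already displayed in the excerpt, I introduce the shifted form
\begin{equation*}
b_\omega(\eta)(u,v) := a_\omega(\eta)(u,v) + C(u,v),
\end{equation*}
which for the value of $C$ given in \eqref{ellip} is continuous and coercive on $H^1_c(\omega)\times H^1_c(\omega)$. Then for every $F\in L^2_c(\omega)$, the Lax--Milgram lemma provides a unique $u=G_\omega(\eta)F\in H^1_c(\omega)$ satisfying $b_\omega(\eta)(u,v)=(F,v)$ for all $v\in H^1_c(\omega)$. Composing with the compact embedding above we get $G_\omega(\eta)\colon L^2_c(\omega)\to L^2_c(\omega)$ compact, and the symmetry of the coefficients $a^\omega_{kl}=a^\omega_{lk}$ makes $b_\omega(\eta)$ a symmetric sesquilinear form, so $G_\omega(\eta)$ is self-adjoint on $L^2_c(\omega)$. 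Moreover $G_\omega(\eta)$ is positive, since $b_\omega(\eta)(u,u)\geq\tfrac{\alpha}{2}\|u\|_{H^1}^2\geq 0$ implies $(G_\omega(\eta)F,F)=b_\omega(\eta)(u,u)\geq 0$.

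The Hilbert--Schmidt theorem now yields a non-increasing sequence of positive eigenvalues $\mu_m(\eta)\to 0^+$ together with an orthonormal basis $\{\varphi_{\omega,m}(\cdot;\eta)\}$ of $L^2_c(\omega)$, each eigenspace of finite dimension. Setting $\lambda_m(\eta):=\mu_m(\eta)^{-1}-C$ one recovers $\mathcal{A}_\omega(\eta)\varphi_{\omega,m}=\lambda_m(\eta)\varphi_{\omega,m}$, proving (i); the ordering, divergence and finite multiplicity in (ii) follow from the corresponding statement for $\mu_m$, and the nonnegativity $\lambda_1(\eta)\geq 0$ is obtained directly from
\begin{equation*}
a_\omega(\eta)(v,v)=\int_\omega A_\omega(y)(\nabla+i\eta)v\cdot\overline{(\nabla+i\eta)v}\,dy\geq 0
\end{equation*}
combined with the variational characterization $\lambda_1(\eta)=\inf_{v\ne 0} a_\omega(\eta)(v,v)/\|v\|_{L^2}^2$. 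Items (iii) and (iv) are the orthonormal-basis conclusion of Hilbert--Schmidt and the resulting spectral expansion.

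The main point requiring care, rather than a true obstacle, is the nonstandard state space $H^1_c(\omega)$: one must verify that the natural $L^2$-pairing between $L^2_c(\omega)$ and $H^1_c(\omega)$ is the correct one for the variational formulation (it is, since the test space is a closed subspace of $H^1(\omega)$ and we restrict the usual pairing), and that the overdetermined boundary conditions in \eqref{NE1} are genuinely encoded by the weak formulation \eqref{nl17} tested against $v\in H^1_c(\omega)$ whose trace is a free constant. Once these subspace issues are settled, everything reduces to the standard spectral theory of compact self-adjoint operators.
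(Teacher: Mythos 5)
Your proposal is correct and follows essentially the same route as the paper: Lax--Milgram applied to the coercively shifted form from \eqref{ellip}, the Green's operator $G_\omega(\eta)$ composed with the compact embedding $H^1_c(\omega)\hookrightarrow L^2_c(\omega)$, self-adjointness from the symmetry $a^\omega_{kl}=a^\omega_{lk}$, and the spectral theorem for compact self-adjoint operators. You merely spell out details the paper leaves implicit (closedness of $H^1_c(\omega)$, the shift $\lambda_m(\eta)=\mu_m(\eta)^{-1}-C$, positivity), so no substantive difference.
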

\noindent
The above result establishes the existence of Bloch eigenvalues and Bloch waves and describes
some of their properties. 
\begin{remark}
 In context to periodic structures, we do not have $\mathcal{A}_\omega = \int \mathcal{A}_\omega(\eta)d\eta$.
 As a consequence, we do not have a basis of eigenvectors. Fortunately, this is not required 
 for homogenization purposes. What is required is some information  about ground state.
\end{remark}

\subsection{Regularity of the Ground State}

We have obtained a simplified picture of the operator $\mathcal{A}_\omega$, namely it
is a multiplication operator with eigenvalues $\{\lambda_m (\eta)\}_{m\in \mathbb{N}}$
as multipliers. We expect, the higher eigenvalues $\{\lambda_m (\eta); m \geq 2\}$
do not play any role as they are not excited in the homogenization process. 
Thus, we are reduced to consider the ground state  $\varphi_1(y;\eta)$
and the corresponding energy $\lambda_1(\eta)$ and what matters in homogenization is
their regular behavior near $\eta = 0$. In the sequel, we establish two types of results which are consistent with 
the regularity observed in the homogeneous case.
The first one is a global regularity result valid for all $m \geq 1$. The second one is
local regularity of the ground state at $\eta = 0.$
Our first result is concerned with global regularity of all eigenvalues.
\begin{proposition}
 For $m \geq 1 $, $\lambda_m(\eta)$ is a Lipschitz function of $\eta$.
\end{proposition}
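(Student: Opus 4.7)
The plan is to combine the Courant--Fischer min-max characterization of $\lambda_m(\eta)$ with the explicit polynomial dependence of the sesquilinear form $a_\omega(\eta)$ on the parameter $\eta$. By Theorem \ref{deH}(ii)--(iii), the compact self-adjoint Green operator $G_\omega(\eta)$ gives the usual variational formula
\begin{equation*}
\lambda_m(\eta) \;=\; \min_{\substack{V\subset H^1_c(\omega)\\ \dim V = m}}\;\max_{0\neq v\in V}\; \frac{a_\omega(\eta)(v,v)}{(v,v)},
\end{equation*}
so Lipschitz control on the Rayleigh quotient $R_\eta(v):=a_\omega(\eta)(v,v)/\|v\|_{L^2(\omega)}^2$ will transfer to $\lambda_m$.

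First I would expand the shifted form. Using the symmetry $a^\omega_{kl}=a^\omega_{lk}$,
\begin{equation*}
a_\omega(\eta)(v,v) \;=\; a_\omega(v,v) \;+\; 2\,\mathrm{Re}\!\left(i\eta_l \int_\omega a^\omega_{kl}(y)\, v\,\overline{\tfrac{\partial v}{\partial y_k}}\,dy\right) \;+\; \int_\omega a^\omega_{kl}(y)\,\eta_k\eta_l\,|v|^2\,dy,
\end{equation*}
which is a real polynomial of degree two in $\eta$ with coefficients that are continuous quadratic forms in $v\in H^1(\omega)$. Subtracting the same identity at $\eta'$ and applying Cauchy--Schwarz together with $|a^\omega_{kl}|\le\beta$, one obtains the pointwise estimate
\begin{equation*}
\bigl|a_\omega(\eta)(v,v)-a_\omega(\eta')(v,v)\bigr| \;\le\; C_\beta\,|\eta-\eta'|\,\bigl(1+|\eta|+|\eta'|\bigr)\,\bigl(\|\nabla v\|_{L^2}^2+\|v\|_{L^2}^2\bigr).
\end{equation*}

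Next I would convert the $H^1$-norm on the right into a quantity that can be bounded purely in terms of the Rayleigh quotient at $\eta'$. For this, the ellipticity estimate \eqref{ellip} gives
\begin{equation*}
\|\nabla v\|^2+\|v\|^2 \;\le\; \frac{2}{\alpha}\bigl(a_\omega(\eta')(v,v)+C(\eta')\|v\|^2\bigr),
\end{equation*}
with $C(\eta')$ a polynomial in $|\eta'|$ coming from the proof of \eqref{ellip}. Dividing by $\|v\|^2$ yields
\begin{equation*}
\bigl|R_\eta(v)-R_{\eta'}(v)\bigr| \;\le\; K(|\eta|,|\eta'|)\,\bigl(R_{\eta'}(v)+1\bigr)\,|\eta-\eta'|,
\end{equation*}
for a continuous function $K$. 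Feeding this into the min-max formula with the optimal $m$-dimensional subspace $V^*(\eta')$ realising $\lambda_m(\eta')$, each $v\in V^*(\eta')$ satisfies $R_{\eta'}(v)\le\lambda_m(\eta')$, so
\begin{equation*}
\lambda_m(\eta) \;\le\; \max_{v\in V^*(\eta')} R_\eta(v) \;\le\; \lambda_m(\eta') + K(|\eta|,|\eta'|)\bigl(\lambda_m(\eta')+1\bigr)\,|\eta-\eta'|,
\end{equation*}
and the symmetric inequality holds by interchanging $\eta\leftrightarrow\eta'$. This delivers a Lipschitz bound locally in $\eta$ with constant depending on $\lambda_m$.

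The only real obstacle is the potential growth of the Lipschitz constant with $|\eta|$ and with $m$: to get a clean Lipschitz statement one should either (a) state local Lipschitz on every bounded subset of $\mathbb{R}^N$, observing that $\lambda_m(\eta)$ is automatically bounded on such sets by comparison with the fixed-sign shift $a_\omega(\eta)(v,v)+C\|v\|^2$ and $\eta\mapsto C(\eta)$ being continuous; or (b) reduce modulo the translation invariance $\eta\mapsto\eta+q$ available for an appropriate lattice (although unlike the periodic case this is not built into the Hashin--Shtrikman setting, so (a) is the natural statement). I would present the argument in the form (a), since that is what is actually used in the subsequent local analysis around $\eta=0$.
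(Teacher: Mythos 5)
Your proposal is correct and takes essentially the same route as the paper: the Courant--Fischer min-max characterization combined with a Cauchy--Schwarz perturbation estimate for the difference $a_\omega(\eta)(v,v)-a_\omega(\eta')(v,v)$. The only refinement is that you make explicit, via the ellipticity bound \eqref{ellip}, how the $H^1$-norm appearing in that estimate is absorbed into the Rayleigh quotient (hence a local-in-$\eta$, $m$-dependent Lipschitz constant), a normalization step the paper leaves implicit.
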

\begin{proof}
According to Courant-Fischer characterization of the eigenvalue via min-max principle, we have
$$\lambda_m(\eta) = \underset{dim F = m}{min}\ \underset{v\in F}{max}\ \ \frac{a_\omega(\eta)(v,v)}{(v,v)},$$
where $F$ ranges over all subspace of $H^1_{c}(\omega)$ of dimension=$m$. We notice that
$a_\omega(\eta)(v,v)$ can be decomposed as 
$$ a_\omega(\eta)(v,v) = a_\omega(\eta^{\prime})(v,v) + R_\omega(v,\eta,\eta^{\prime}),$$
where
$$ R_\omega =  \int_\omega a^\omega_{kl}(\eta_l\eta_k-\eta^{\prime}_l\eta^{\prime}_k)|v|^2 dy
+ \int_\omega a^\omega_{kl} \frac{\partial v}{\partial y_l}\overline{(i\eta_k-i\eta^{\prime}_k)v}dy 
+ \int_\omega a^\omega_{kl} (i\eta_l-i\eta^{\prime}_l)v \overline{\frac{\partial v}{\partial y_k}} dy. 
$$
By Cauchy-Schwartz inequality, $R_\omega$ can be estimated as 
$$ |R_\omega| \leq c |\eta -\eta^{\prime}| ||v||_{H^1_c(\omega)}. $$
And consequently, using the above min-max characterization, we get 
$$ |\lambda_m(\eta) -\lambda_m(\eta^{\prime})| \leq c_m |\eta - \eta^{\prime}|.$$
\hfill\end{proof}
\noindent
For homogenization purposes, above global regularity is not sufficient. We need a strong
local regularity of the ground state and the corresponding energy. 
Here, as an application of the Kato-Rellich perturbations theory \cite{K},
we will eastablish the analyticity of $(\lambda_1(\eta), \varphi_{\omega,1}(y;\eta))$ 
in some neighborhood $\omega^{\prime}$ of $\eta = 0$. We execute it, in the following steps:

\begin{proposition}
$\mathcal{A}_\omega(\eta)$ is a holomorphic family of type $(B)$.
\end{proposition}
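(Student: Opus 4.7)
The plan is to verify the four defining conditions of a holomorphic family of type (B) in the sense of Kato \cite{K}: a common form domain independent of $\eta$, holomorphic dependence of $\eta \mapsto a_\omega(\eta)(u,v)$ for every fixed pair $u, v$ in that domain, sectoriality of each form, and closedness. The common form domain is $H^1_c(\omega)$, which is a closed subspace of $H^1(\omega)$ and hence a Hilbert space in its own norm, densely and compactly embedded into $L^2_c(\omega)$; it clearly does not depend on $\eta$.

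For holomorphicity, I would first expand the defining integrand and observe that it is the sum of a constant, linear, and quadratic term in $\eta$. To extend to complex $\eta \in \mathbb{C}^N$, one must \emph{not} complex-conjugate $\eta$: explicitly, I would replace the factor $\overline{(\partial_k + i\eta_k) v}$ by $(\partial_k - i\eta_k)\overline{v}$, obtaining an extension $\tilde a_\omega(\eta)(u,v)$ which is a polynomial of total degree two in the components of $\eta$, hence entire in $\eta$ for each fixed $u, v \in H^1_c(\omega)$. On $\mathbb{R}^N$ this extended form coincides with the original $a_\omega(\eta)(u,v)$.

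Sectoriality and closedness of each $\tilde a_\omega(\eta)$ should follow from Cauchy--Schwarz estimates of the same type that already produced \eqref{ellip}: one expects $\mathrm{Re}\,\tilde a_\omega(\eta)(v,v) + C\|v\|_{L^2}^2 \geq \tfrac{\alpha}{2}\|v\|_{H^1}^2$ and $|\mathrm{Im}\,\tilde a_\omega(\eta)(v,v)| \leq c\|v\|_{H^1}^2$ for $\eta$ in any bounded subset of $\mathbb{C}^N$, with constants depending on $\alpha$, $\beta$, and $|\eta|$. Combined with the completeness of $H^1_c(\omega)$ in the $H^1$-norm, the first estimate immediately gives closedness, and together the two estimates force the numerical range into a sector of aperture less than $\pi$, which is $m$-sectoriality. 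The main technical point, and the only non-routine step, is to verify \emph{uniform} sectoriality on a complex neighborhood of $0$: one has to check that the opening angle of the sector can be chosen independent of $\eta$ locally, which amounts to controlling $|\mathrm{Im}\,\tilde a_\omega(\eta)(v,v)|$ by $\mathrm{Re}\,\tilde a_\omega(\eta)(v,v)$ modulo lower-order terms. This reduces to a routine Cauchy--Schwarz computation exploiting the symmetry and uniform bounds on $A_\omega$ together with the smallness of $|\mathrm{Im}\,\eta|$.
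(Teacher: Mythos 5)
Your proposal is correct and follows essentially the same route as the paper: extend the form to complex $\eta'$ as a degree-two polynomial (holomorphic family of type (a) on the fixed domain $H^1_c(\omega)$), obtain sectoriality from the G\r{a}rding-type estimate \eqref{ellip} for $\eta'$ in a bounded complex neighborhood of $0$, get closedness from the completeness of $H^1_c(\omega)$, and invoke Kato's representation of the associated $m$-sectorial operators to conclude type (B). If anything, your remark that one must not conjugate $\eta$ in the extension is a point the paper glosses over (it even writes $\Re\mathbf{t}_\omega(\eta')[u]=\mathbf{t}_\omega(\eta')[u]$, which is only literally true for real $\eta$), so your version is the more carefully stated one.
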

\begin{proof}
For the definition purpose we start with this simple extension 
$\mathcal{A}_\omega(\eta^{\prime})$ of $\mathcal{A}_\omega(\eta)$ to $\eta^{\prime}\in \mathbb{C}^N$,
which will be shown as a holomorphic family of type $(B)$. 
We refer \cite{K} for the definition of a holomorphic family of sesquilinear forms. 
Corresponding to a sesquilinear form $\mathbf{t}_\omega$ on $L^2_c(\omega)$ with domain $\mathcal{D}(\mathbf{t}_\omega) = \ H^1_c(\omega)$, 
we define the quadratic form $\mathbf{t}_\omega[u]$ by
\begin{center}$\mathbf{t}_\omega[u] := \mathbf{t}_\omega[u, u].$\end{center}
We denote the real and imaginary parts of $\mathbf{t}_\omega[u]$ by $\Re \mathbf{t}_\omega[u]$ and $\Im \mathbf{t}_\omega[u],$ respectively.
The sesquilinear form associated with the operator
$\mathcal{A}_\omega(\eta)$, namely, the family of sesquilinear forms $\mathbf{t}_\omega(\eta^{\prime})$
depending on $\eta^{\prime}\in \mathbb{C}^N$,
with same domain given by $D(\mathbf{t}_\omega(\eta^{\prime})) = H^1_c(\omega)$ contained in $L^2_c(\omega)$ defined as 
\begin{equation}\label{st}
\mathbf{t}_\omega(\eta^{\prime})[u, w] = \int_\omega A_\omega(y)(\nabla + i\eta^{\prime})u\cdot\overline{(\nabla +i\eta^{\prime})w}\ dy.
\end{equation}
$\mathbf{t}_\omega(\eta^{\prime})[u, w]$ reduces to the sesquilinear form corresponding to the operator $\mathcal{A}_\omega(\eta),$ when $\eta = \eta^{\prime}\in \mathbb{R}^N$. 
Since $\eta\in \omega^{\prime}$ a neighborhood around zero, we restrict $\eta$ to the set
\begin{equation*} R:= \Big\{\eta^{\prime} \in \mathbb{C}^N : \eta^{\prime} = \sigma + i\tau ; \sigma, \tau \in \mathbb{R}^N, |\sigma| < \frac{1}{2}, |\tau| < \frac{1}{2} \Big\}.\end{equation*}
\paragraph{Step 1.$\hspace{2pt}$ Sectoriality of $\hspace{3pt}\mathbf{t}_\omega(\eta^{\prime})$:} 
The sectoriality of $\mathbf{t}_\omega(\eta^{\prime})$ means that the 
numerical range $\mathbf{t}_\omega(\eta^{\prime})[u],$ as $u$ varies on the unit sphere
in $L^2_c(\omega),$ lies inside a sector of the complex plane with vertex 
at some point in the complex plane, and its projection on the real axis 
is bounded from below. However, the real part of $\mathbf{t}_\omega(\eta^{\prime})[u]$ is 
\begin{equation*}
\Re\mathbf{t}_\omega(\eta^{\prime})[u] = \mathbf{t}_\omega(\eta^{\prime})[u]=  \int_{\omega} A_\omega(y)(\nabla + i\eta^{\prime})u\cdot\overline{(\nabla + i\eta^{\prime})u}\ dy.
\end{equation*}
Similar to \eqref{ellip}, by choosing a constant $C \geq \frac{\alpha}{2} + (\frac{2\beta^2}{\alpha} - \alpha)|\eta^\prime|^2$, we have   
\begin{equation}\label{ret}
\mathbf{t}_\omega(\eta^{\prime})[u] + C||u||^2_{L^2_c(\omega)} \geq \frac{\alpha}{2}||u||^2_{H^1_c(\omega)}.
\end{equation}
Following this, we now consider a new family of forms $\mathbf{t}_\omega^{\prime}(\eta^{\prime})$,
with same domain as that of the family $\mathbf{t}_\omega(\eta^{\prime}),$ namely $H^1_c(\omega)$, defined by
\begin{equation*} \mathbf{t}_\omega^{\prime}(\eta^{\prime})[u, w] = \mathbf{t}_\omega(\eta^{\prime})[u, w] + C(u, w)_{L^{2}(\omega)}.\end{equation*}
For the new family $\mathbf{t}_\omega^{\prime}(\eta^{\prime})$, the inequality \eqref{ret} reads as
\begin{equation*} 
\mathbf{t}_\omega^{\prime}(\eta^{\prime})[u,u] \geq \frac{\alpha}{2}||u||^{2}_{H^1_c(\omega)}.
\end{equation*}
Thus, $\mathbf{t}_\omega^{\prime}(\eta^{\prime})$ is sectorial for each $z$ and uniformly with respect to $z\in R$. 
Since the addition of a scalar does not affect the sectorial nature, it follows that $\mathbf{t}_\omega(\eta^{\prime})$ is sectorial.
\paragraph{Step 2.$\ \ \mathbf{t}_\omega(\eta^{\prime})$ is closed:}
Let $u_n \in H^{1}_c(\omega)$ be such that 
$u_n \rightarrow u$. By the definition of
$\mathbf{t}_\omega^{\prime}$-convergence, 
we have $\mathbf{t}_\omega^{\prime}[u_n - u_m] \rightarrow 0$ 
and in turn  $\Re\mathbf{t}_\omega^{\prime}[u_n - u_m ]\rightarrow 0$ which implies $||u_n - u_m||^2_{H^{1}(\omega)}\rightarrow 0$,
as $ n, m \rightarrow \infty. $
Since $H^1_c(\omega)$ is complete, thus by following the definition of $\mathbf{t}_\omega^{\prime}$-convergence, 
clearly we have $\mathbf{t}_\omega^{\prime}(\eta^{\prime})$ is closed for each $z$. Due to the fact
that adding a real number is independent of the property of closedness of the form, 
it follows that $\mathbf{t}_\omega(\eta^{\prime})$ is closed.
\paragraph{Step 3.$\ \ \mathbf{t}_\omega(\eta^{\prime})$ is a holomorphic family of type (a):}
It comes from the fact that $\mathbf{t}_\omega(\eta^{\prime})[u]$
is a quadratic polynomial in $\eta^{\prime}$ for each fixed $u \in H^{1}_c(\omega)$.\\
\\ 
Now following \cite[Page no. 322]{K} associated with each 
$\mathbf{t}_\omega(\eta^{\prime})$, there exists a unique $m$-sectorial 
operator $\mathcal{A}_\omega(\eta^{\prime})$ with domain contained in 
$\mathcal{D}(\mathbf{t}_\omega)(\eta^{\prime}) $ ($H^1_c(\omega)$)
and the family of such operators associated with a holomorphic family of
sesquilinear forms of type $(a)$ is called a holomorphic family of type $(B)$. 
\hfill
\end{proof}
\paragraph{Application of Kato-Rellich theorem:}
We now apply the following Kato--Rellich theorem 
in order to show the existence of analytic eigenvalue 
$\lambda_1(\eta^{\prime})$ and analytic eigenvector $\varphi_{\omega,1}(\eta^{\prime})$ branches
with values in $L^2(\omega)$, near $\eta^{\prime}= 0$.
\begin{theorem}(see \cite{K})
Let $\mathcal{A}_\omega(\eta^{\prime})$, for $\eta^{\prime}$ in a domain 
in $\mathbb{C}^N$, be a holomorphic family of type $(B)$ with 
domains $\mathcal{D}(\mathcal{A}_\omega(\eta^{\prime}))$ contained in $L^2_c(\omega)$. 
Further, let $\lambda_0$ be an isolated
eigenvalue of $\mathcal{A}_\omega(\eta^{\prime}_0)$ that is algebraically simple. 
Then there exists a neighborhood 
$R_{0}\subseteq \mathbb{C}^N$ of $\eta^{\prime}_0$ such that 
for $\eta^{\prime}$ in $R_0$ , the following affirmations hold:
\begin{enumerate}
\item  There is exactly one point $\lambda_1(\eta^{\prime})$ of $\sigma(\mathcal{A}_\omega(\eta^{\prime}))$ 
near $\lambda_0$. Also, $\lambda_1(\eta^{\prime})$ 
is isolated and algebraically simple. 
Moreover, $\lambda_1(\eta^{\prime})$ is an analytic function of $\eta^{\prime}$.
\item  There is an associated eigenvector $\varphi_{\omega,1}(\eta^{\prime})$ depending 
analytically on $\eta^{\prime}$ with values in $L^2_c(\omega)$.
\end{enumerate}
\end{theorem}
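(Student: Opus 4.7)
The plan is to proceed via the Riesz (spectral) projection machinery, which is the standard route in Kato's book and adapts cleanly to form-defined holomorphic families of type $(B)$. The key structural input we have already secured in the previous propositions is that $\mathcal{A}_\omega(\eta')$ arises from the closed sectorial form $\mathbf{t}_\omega(\eta')$ defined on the fixed domain $H^1_c(\omega)$, with uniform sectoriality constants on bounded sets of $\eta'$; this is exactly what upgrades to joint analyticity of the resolvent.

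First I would establish the analyticity of the resolvent as a bounded operator. Fix a circle $\Gamma\subset\mathbb{C}$ of small radius around $\lambda_0$ such that $\Gamma$ lies in $\rho(\mathcal{A}_\omega(\eta'_0))$, possible since $\lambda_0$ is isolated. For $\eta'$ close to $\eta'_0$, write the form
\begin{equation*}
\mathbf{t}_\omega(\eta')[u,w]-\lambda(u,w) = \mathbf{t}_\omega(\eta'_0)[u,w]-\lambda(u,w) + \mathbf{r}(\eta',\eta'_0)[u,w],
\end{equation*}
where the remainder $\mathbf{r}$ is a quadratic polynomial in $(\eta'-\eta'_0)$ with form-bound small relative to $\mathbf{t}_\omega(\eta'_0)$. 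A standard Neumann-series argument at the level of forms, combined with the uniform coercivity inequality (\ref{ret}), shows that for $\eta'$ in a neighborhood $R_0$ of $\eta'_0$ and $\lambda\in\Gamma$, the operator $(\mathcal{A}_\omega(\eta')-\lambda)^{-1}$ exists in $\mathcal{B}(L^2_c(\omega))$ and depends jointly analytically on $(\eta',\lambda)$. Shrinking $R_0$ if necessary, $\Gamma$ stays in the resolvent set for every $\eta'\in R_0$.

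Second, define the Riesz projection
\begin{equation*}
P(\eta') = -\frac{1}{2\pi i}\oint_\Gamma (\mathcal{A}_\omega(\eta')-\lambda)^{-1}\,d\lambda.
\end{equation*}
Analyticity of the integrand in $\eta'$ carries over to $P(\eta')$, and $P(\eta')$ is a projection onto the spectral subspace associated with the portion of $\sigma(\mathcal{A}_\omega(\eta'))$ enclosed by $\Gamma$. Since $\mathrm{rank}\,P(\eta'_0)=1$ by the algebraic simplicity hypothesis, and the map $\eta'\mapsto P(\eta')$ is norm-continuous, a classical perturbation lemma (projections close in norm to a finite-rank projection have the same rank) forces $\mathrm{rank}\,P(\eta')=1$ throughout $R_0$. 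Hence $\Gamma$ encloses exactly one eigenvalue $\lambda_1(\eta')$, algebraically simple.

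Third, extract $\lambda_1(\eta')$ and its eigenvector. The analytic formula
\begin{equation*}
\lambda_1(\eta') = \mathrm{tr}\bigl(\mathcal{A}_\omega(\eta')\,P(\eta')\bigr)
\end{equation*}
exhibits $\lambda_1$ as an analytic function of $\eta'$. For the eigenvector, pick any $\varphi_0\in L^2_c(\omega)$ with $P(\eta'_0)\varphi_0\neq 0$, and set $\varphi_{\omega,1}(\eta')=P(\eta')\varphi_0$; then $\varphi_{\omega,1}$ is analytic in $\eta'$ (with values in $L^2_c(\omega)$, and in fact in $H^1_c(\omega)$ after multiplying by the analytic resolvent once) and lies in the rank-one range of $P(\eta')$, so it is an eigenvector for $\lambda_1(\eta')$. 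I expect the main technical obstacle to be the joint resolvent analyticity in the first step: because the family is type $(B)$ and not $(A)$, the operator domains $\mathcal{D}(\mathcal{A}_\omega(\eta'))$ may vary with $\eta'$, so one cannot differentiate $\mathcal{A}_\omega(\eta')$ directly; the argument must be carried out at the form level using Kato's representation theorem to pass from form-boundedness of the perturbation $\mathbf{r}$ to a convergent resolvent Neumann series in $\mathcal{B}(L^2_c(\omega))$, and this is precisely where the uniform coercivity estimate established in Step 1 of the previous proposition does the essential work.
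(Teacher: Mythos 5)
The paper does not prove this statement at all: it is quoted verbatim as the Kato--Rellich theorem with a citation to Kato's book, and the form-theoretic groundwork (sectoriality, closedness, type (a)/(B) structure of $\mathbf{t}_\omega(\eta^{\prime})$) established in the preceding proposition is exactly what makes the citation applicable. Your Riesz-projection argument --- resolvent analyticity obtained at the form level via the uniform coercivity \eqref{ret}, constancy of $\mathrm{rank}\,P(\eta^{\prime})$ near a simple isolated eigenvalue, the trace formula for $\lambda_1(\eta^{\prime})$, and $\varphi_{\omega,1}(\eta^{\prime})=P(\eta^{\prime})\varphi_0$ --- is correct and is essentially the standard proof in the cited source, including the right handling of the type $(B)$ subtlety that the operator domains may vary so the perturbation must be controlled through the forms rather than the operators.
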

\noindent
We choose $\eta^{\prime} = 0$ and prove the required non-degeneracy of 
the eigenvalue $\lambda_1(0)$ of $\mathcal{A}_\omega(0)$ in the following proposition.
\begin{proposition}
Zero is an eigenvalue of $\mathcal{A}_\omega(0)$ and is an isolated point of the spectrum
with its algebraic multiplicity one.
\end{proposition}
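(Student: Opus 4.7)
The plan is to verify the three claims separately: that $0$ lies in the spectrum, that it is isolated, and that its algebraic multiplicity equals one.

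First, I would exhibit an explicit eigenfunction. At $\eta=0$, the operator $\mathcal{A}_\omega(0)$ reduces to $-\partial_k[a^\omega_{kl}(y)\partial_l\,\cdot\,]$ with domain sitting inside $H^1_c(\omega)$ together with the zero--flux condition at $\partial\omega$. Any non-zero constant function $\varphi_0\equiv c$ lies in $H^1_c(\omega)$, has vanishing gradient, trivially verifies the boundary constancy and the zero--flux condition, and satisfies $\mathcal{A}_\omega(0)\varphi_0=0$. Hence $0$ is an eigenvalue with $\varphi_{\omega,1}(y;0)=|\omega|^{-1/2}$ a natural normalized choice.

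Next, to obtain that $0$ is isolated I would invoke the compact, self-adjoint nature of the Green's operator $G_\omega(0):L^2_c(\omega)\to L^2_c(\omega)$ established in Theorem \ref{deH}: the spectrum of $\mathcal{A}_\omega(0)$ consists of a discrete sequence $0\le\lambda_1(0)\le\lambda_2(0)\le\cdots\to\infty$ of finite multiplicity, so $0$, being the smallest eigenvalue, is automatically isolated provided $\lambda_2(0)>0$. This reduces the claim to showing that the eigenspace of $0$ is exactly the line of constants (i.e.\ geometric multiplicity one), which in turn forces $\lambda_2(0)>0$.

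For the multiplicity statement, let $u\in H^1_c(\omega)$ belong to the kernel. Testing the weak formulation \eqref{nl17} against $v=u$ gives
\begin{equation*}
0=a_\omega(0)(u,u)=\int_\omega a^\omega_{kl}(y)\frac{\partial u}{\partial y_l}\overline{\frac{\partial u}{\partial y_k}}\,dy\;\ge\;\alpha\int_\omega|\nabla u|^2\,dy,
\end{equation*}
by the ellipticity of $A_\omega$. Therefore $\nabla u\equiv 0$ on the connected domain $\omega$, so $u$ is a constant. This shows the geometric multiplicity of $0$ is exactly one.

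Finally, I would upgrade geometric to algebraic multiplicity by appealing to self-adjointness. The symmetry hypothesis $a^\omega_{kl}=a^\omega_{lk}$ makes the sesquilinear form $\mathbf{t}_\omega(0)$ symmetric, so the associated $m$-sectorial operator $\mathcal{A}_\omega(0)$ is self-adjoint on $L^2_c(\omega)$. For self-adjoint operators on a Hilbert space the algebraic and geometric multiplicities of each isolated eigenvalue coincide, because the corresponding Riesz spectral projection is orthogonal and its range is the eigenspace itself (no nontrivial Jordan block can occur). Hence the algebraic multiplicity of $\lambda=0$ equals one, completing the proof. The only non-routine step is the kernel characterization; this is the ``main obstacle,'' but the coercivity argument above handles it immediately once the correct test function is used.
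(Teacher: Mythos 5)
Your proof is correct and follows essentially the same route as the paper: the compact self-adjoint Green's operator gives a discrete spectrum (hence isolation), the coercivity/integration-by-parts argument identifies the kernel with the constants, and self-adjointness rules out any discrepancy between geometric and algebraic multiplicity. The paper states these same steps more tersely, so your write-up is just a more detailed version of its argument.
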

\begin{proof}
Consider the problem
\begin{equation*}
 \begin{aligned}
& -\frac{\partial}{\partial y_k}(a^{\omega}_{kl}(y)\frac{\partial}{\partial y_l})\varphi_{\omega}(y) = \lambda \varphi_{\omega}(y) \mbox{ in } \omega,\\ 
&\varphi_{\omega}(y)\in H^1_c(\omega) \quad\mbox{and}\quad \int_{\partial\omega} a^{\omega}_{kl}(y)\frac{\partial}{\partial y_k}\varphi_{\omega}(y)\cdot\nu_l\ d\sigma = \ 0.
\end{aligned}
\end{equation*}
$\lb-\frac{\partial}{\partial y_k}(a^{\omega}_{kl}(y)\frac{\partial}{\partial y_l})\rb^{-1}$ is a compact 
self-adjoint non-negative definite operator from $L^2(\omega)$ to itself. 
When $\lambda =0$, $\varphi_{\omega}=$ constant is the only solution (via integration by parts).
Which says the first eigenvalue $\lambda_1$ is zero, it is an simple eigenvalue
with the corresponding eigenspace $E_{\lambda_1} =\{$set of all constants$\}$.
\hfill
\end{proof}
\noindent
Therefore, following the above stated Kato-Rellich theorem by considering $\eta^{\prime}=0$  
and $\lambda_1(0)=0$ and $\varphi_{\omega,1}(y;0) = |\omega|^{-1/2}$, there exists a neighborhood of $R$, say $R_0$, where there is an eigenvalue branch $\lambda_1(\eta^{\prime})$ and the corresponding 
eigenvector branch $\varphi_{\omega,1}(\cdot;\eta^{\prime})$, which are analytic with values 
in $\mathbb{C}$ and $H^1_c(\omega)$ respectively.
\begin{remark}
The strong holomorphy for a Banach space valued functions of a complex variable 
which is same as the notion of weak holomorphy, i.e. for each $v \in L^2_c(\omega)$, 
the function $\langle \varphi(\eta^{\prime}), v)\rangle_{L^2_c(\omega)}$ is 
holomorphic in $R_0$ and using the usual duality argument, together with the 
 compact embedding $ H^1_c(\omega) \hookrightarrow L^2_c(\omega) \hookrightarrow H^{-1}(\omega)$,
$\langle \varphi(\eta^{\prime}),v\rangle_{H^1_c(\omega), H^{-1}(\omega)}$ 
is holomorphic in $R_0$, for each $v \in L^2_c(\omega)$.
\end{remark}
\noindent
We restrict $\lambda_1(\eta^{\prime})$ and $\varphi_{\omega,1}(\cdot;\eta^{\prime})$ 
to $\eta^{\prime}$ real, we will be using the symbol $\eta\in\mathbb{R}^N$ for that
and we denote neighborhood $\omega^{\prime}$ of $\eta =
0$ in $\mathbb{R}^N$ containing the real-analytic eigenelements.\\

In the following section we will explore the analyticity of the ground state and compute various derivatives.

\section{Computation of derivatives of ground state}\label{nl15}
\setcounter{equation}{0} 
Here we compute the derivatives of $\lambda_1(\eta)$ and 
derivatives of $\varphi_{\omega,1}(\cdot;\eta)$ at $\eta = 0$.
We will see that $2M$ ($M$ is the homogenized matrix)
coincides with the Hessien matrix of $\lambda_1(\eta)$ at $\eta = 0$.
Before proceeding further, there is a need for proper normalization of 
ground state and this is what we do next.

\paragraph{Boundary Normalization:}
Since $\varphi_{\omega,1}(y;\eta)\in H^1_c(\omega)$ i.e. $\varphi_{\omega,1}(y;\eta)|_{\partial\omega} = c_\eta \in \mathbb{C}$, where 
some constants which may depend upon $\eta$.
There is a choice of eigenvectors $\varphi_{\omega,1}(y; \eta)$ of dimension $1$
that depends analytically on $\eta$ in a small neighborhood $\omega^{\prime}$ of $\eta = 0$.
At $\eta =0$ we already made a choice that $\varphi_{\omega,1}(y;0) = |\omega|^{-1/2}$. Now
due to the analyticity of $\varphi (y; \eta )$ near $\eta=0$, we choose a
neighborhood (still denoting as $\omega^{\prime}$), where $\varphi_{\omega,1}(y; \eta )$ is always non-zero, in particular $c_{\eta} \neq 0$. 
Therefore, by multiplying $\varphi_{\omega,1}(y;\eta )$ by $\frac{ |\omega|^{-\frac{1}{2}}}{c_\eta}$, here we make a 
new choice of $\varphi_{\omega,1}(y;\eta)$ which is uniformly (w.r.t. $\eta$) constant on the  boundary, i.e.
$$\varphi_{\omega,1}(y;\eta)|_{\partial\omega} = |\omega|^{-1/2}\quad\mbox{for } \eta \in \omega^{\prime}.$$ 
Consequently, for any $l\in \mathbb{Z}^N_{+}$ ($l\neq 0$),
\begin{equation*}D^{l}_{\eta}\varphi_{\omega,1}(y;0) \in H^1_0(\omega). \end{equation*} 

\paragraph{Derivatives of $\lambda_1(\eta)$ and $\varphi_{\omega,1}(\eta)$ at $\eta=0$:}
The procedure consists of differentiating the following eigenvalue equation
\begin{align}
&-\Big(\frac{\partial}{\partial y_k} + i\eta_k\Big)\Big[a^{\omega}_{kl}(y)\Big(\frac{\partial}{\partial y_l} + i\eta_l\Big)\Big]\varphi_{\omega,1}(y;\eta) = \lambda_1(\eta)\varphi_{\omega,1}(y;\eta) \mbox{ in }\omega, \label{com}\\
&\varphi_{\omega,1}(y;\eta)= |\omega|^{-1/2} \mbox{ on }\partial\omega \quad\mbox{and}\quad \int_{\partial\omega} a^{\omega}_{kl}(y)\Big(\frac{\partial}{\partial y_k} + i\eta_k\Big)\varphi_{\omega,1}(y;\eta)\cdot\nu_l\ d\sigma = \ 0. \label{comb}
\end{align}
We begin with the expression \eqref{ae} of the shifted operator
\begin{align*}
\mathcal{A}_\omega(\eta) &=\ \mathcal{A}_\omega + i\eta_k C^{\omega}_k + \eta_k\eta_l a^{\omega}_{kl}(y),\\
\mbox{with }\ C^{\omega}_k(\varphi) &=\ - a^{\omega}_{kj}(y)\frac{\partial \varphi}{\partial y_j} - \frac{\partial}{\partial y_j}(a^{\omega}_{kj}(y)\varphi).
\end{align*}
\paragraph{Step 1.\hspace{2pt} Zeroth order derivatives:}
We simply recall that $\varphi_{\omega,1}(y;0) = |\omega|^{-1/2}$ by our choice and $\lambda_1(0) = 0.$
\paragraph{Step 2.\hspace{2pt} First order derivatives of $\lambda_1(\eta)$ at $\eta =0$:}
By differentiating the equation \eqref{com} once with respect to $\eta_k$, we obtain
\begin{equation}\label{fto}D_k(\mathcal{A}_\omega(\eta)- \lambda_1(\eta))\varphi_{\omega,1}(\cdot;\eta) + (\mathcal{A}_\omega(\eta) - \lambda_1(\eta))D_k\varphi_{\omega,1}(\cdot;\eta) = 0.\end{equation}
Taking scalar product with $\varphi_{\omega,1}(\cdot;\eta)$ in $L^2(\omega)$ and evaluate the above relation at $\eta =0$, we get
\begin{equation*}\langle D_k (\mathcal{A}_\omega(0) - \lambda_1(0))\varphi_{\omega,1}(\cdot;0),\varphi_{\omega,1} (\cdot;0)\rangle = 0.\end{equation*}
Since $D_k \mathcal{A}_\omega(0) = iC^{\omega}_k $ and $\varphi_{\omega,1}(\cdot;0) = |\omega|^{-1/2}  $ does not depend on $y$, then we obtain
\begin{equation*}C^{\omega}_k \varphi_{\omega,1}(\cdot;0) = -\frac{\partial}{\partial y_j}(a^{\omega}_{kj}\varphi_{\omega,1}(\cdot;0)).\end{equation*}
Hence its integral over $\omega$ vanishes through integration by parts together with using \eqref{comb}.
It follows therefore that 
\begin{equation}\label{l10}  D_k \lambda_1(0) = 0 \quad \forall\hspace{2pt} k = 1,..,N. \end{equation}
\paragraph{Step 3. \hspace{2pt} First order derivatives of $\varphi_{\omega,1}(.;\eta)$ at $\eta=0$: }
Using \eqref{l10} and from \eqref{fto} at $\eta = 0$, we get the following equation 
\begin{equation*} \mathcal{A}_\omega D_k \varphi_{\omega,1}(\cdot;0) =- D_k \mathcal{A}_\omega(0)\varphi_{\omega,1}(\cdot;0)\end{equation*}
which can be written as
\begin{equation}\label{p10}
\mathcal{A}_\omega D_k\varphi_{\omega,1}(.;0) = -iC^{\omega}_k\varphi_{\omega,1}(.;0) = i\varphi_{\omega,1}(.;0)\frac{\partial}{\partial y_j}(a^{\omega}_{kj})\quad\mbox{in }\omega.
\end{equation}
Differentiating the boundary condition in \eqref{comb} with respect to $\eta_k$ at $0$, we get
\begin{align}
&D_k\varphi_{\omega,1}(\cdot;0) =\ 0  \quad\mbox{on }\partial\omega \label{p10b}\\
\mbox{and}\quad \int_{\partial\omega} A_\omega(y) & \lb \nabla_y  D_k\varphi_{\omega,1}(y;0) + i\varphi_{\omega,1}(y;0)e_k\rb\cdot\nu \ d\sigma = 0 .\label{p10bb}
\end{align}
As we can see along with boundary condition \eqref{p10b} for the elliptic equation \eqref{p10},
the solution $D_k\varphi_{\omega,1}(\cdot;0)$ gets uniquely determined. So, comparing \eqref{p10}, \eqref{p10b} with 
the test function $w_{e_k}(y)$ defined in \eqref{hsp}, we get
\begin{equation*}
D_k \varphi_{\omega,1} (y; 0) = i|\omega|^{-1/2}(w_{e_k}(y)- y_k).
\end{equation*}
And using the fact $A$ is equivalent to $M$, we have
$$ A_\omega(y)(\nabla_y D_k\varphi_{\omega,1}(y;0) + i\varphi_{\omega,1}(y;0)e_k)\cdot \nu = i|\omega|^{-1/2} A_\omega(y)\nabla w_{e_k}(y)\cdot \nu =\ i|\omega|^{-1/2}\ Me_k\cdot\nu. $$
Hence, it  satisfies \eqref{p10bb}, i.e. 
$$\int_{\partial\omega} A_\omega(y)  \lb \nabla_y  D_k\varphi_{\omega,1}(y;0) + i\varphi_{\omega,1}(y;0)e_k\rb \cdot\nu \ d\sigma= \int_{\partial\omega}i|\omega|^{-1/2}\ M e_k\cdot \nu\ d\sigma = 0 . $$
In particular, $D_k \varphi_{\omega,1} (y; 0)$ is purely imaginary.
\paragraph{Step 4.\hspace{2pt} Second derivatives of $\lambda_1(\eta)$ at $\eta=0$:}
We differentiate \eqref{fto} with respect to $\eta_l$ to obtain
\begin{equation}\label{ffto}\begin{aligned}
&[D^2_{kl}(\mathcal{A}_\omega(\eta) - \lambda_1(\eta))]\varphi_{\omega,1}(\cdot;\eta) + [D_k(\mathcal{A}_\omega(\eta) - \lambda_1(\eta))]D_l\varphi_{\omega,1}(\cdot;\eta)\\
&+ [D_l(\mathcal{A}_\omega(\eta) -\lambda_1(\eta))]D_k \varphi_{\omega,1} (\cdot;\eta) +(\mathcal{A}_\omega(\eta) - \lambda_1(\eta))D^2_{kl}\varphi_{\omega,1}(\cdot;\eta) = 0.
\end{aligned}\end{equation}
Taking scalar product with $\varphi_{\omega,1}(\cdot;\eta)$ in $L^2(\omega)$, we get
\begin{equation*}\begin{aligned}
&\langle D^2_{kl}(\mathcal{A}_\omega(\eta) - \lambda_1(\eta))\varphi_{\omega,1}(\cdot;\eta), \varphi_{\omega,1}(\cdot;\eta)\rangle 
+ \langle [D_k(\mathcal{A}_\omega(\eta) - \lambda_1(\eta))]D_l\varphi_{\omega,1}(\cdot;\eta), \varphi_{\omega,1}(\cdot;\eta)\rangle\\
&+ \langle [D_l(\mathcal{A}_\omega(\eta)- \lambda_1(\eta))]D_k \varphi_{\omega,1}(\cdot;\eta), \varphi_{\omega,1}(\cdot;\eta)\rangle = 0.
\end{aligned}\end{equation*}
Note that $D^2_{kl}\mathcal{A}_\omega(\eta) = 2a^{\omega}_{kl}(y)\quad\forall k,l=1,..,N$.
Evaluating the above relation at $\eta = 0$ and using the information obtained 
in the previous steps, we obtain
\begin{equation*}\begin{aligned}
\frac{1}{2}D^2_{kl}\lambda_1(0) &= \frac{1}{|\omega|}\int_{\omega} a^{\omega}_{kl}(y)dy -
\frac{1}{2|\omega|}\int_{\omega}\{C^{\omega}_k(w_{e_l}(y) - y_l) + C^{\omega}_l(w_{e_k}(y) - y_k)\}dy\\
 &=\ \frac{1}{2|\omega|}\int_{\omega} A_\omega(y)\nabla w_{e_k}(y)\cdot e_l\ dy  + \frac{1}{2|\omega|}\int_{\omega} A_\omega(y)\nabla w_{e_l}(y)\cdot e_k\ dy  \\
 &=\ m_{kl} \quad\forall  k,l=1,.,N,
\end{aligned}\end{equation*}
due to simply using the integral identity \eqref{mkl}. 
They are indeed the homogenized coefficients governed with the Hashin-Shtrikman constructions.
\paragraph{Step 5. Higher order derivatives:}
In general the process can be continued indefinitely to compute all derivatives of
$\lambda_1(\eta)$ and $\varphi_{\omega,1}(\cdot;\eta)$ at $\eta = 0$. In particular, for any $l\in \mathbb{Z}^N_{+}$ with $|l| \geq 2,$ we find 
$D^l \varphi_{\omega,1}(y;0)\in H^1_0(\omega)$ by solving
\begin{equation}\begin{aligned}\label{fe2}
&\mathcal{A}_\omega D^l\varphi_{\omega,1}(y;0) =\ -\sum_{j \neq 0, \ |j| + |k| = |l|} D^{j}(\mathcal{A}_\omega(\eta) - \lambda_1(\eta))|_{\eta=0}\circ D^{k}\varphi_{\omega,1}(y;\eta) |_{\eta = 0} \\
& D^l\varphi_{\omega,1}(y;0) =0 \quad\mbox{on }\partial\omega \quad\mbox{and }\ \int_{\partial\omega}A_{\omega}(y)\nabla_y D^l\varphi_{\omega,1}(y;0)\cdot \nu\ d\sigma = 0
\end{aligned}
\end{equation}
and consequently, $D^q \lambda_1(0)$ with $|q| = |l| +1$.\\
\\
Moreover, it can be shown that all odd order derivatives of $\lambda_1 $ at $\eta=0$ are zero, i.e. 
\begin{equation*}D^{q}\lambda_1(0)=0 \quad\forall q\in \mathbb{Z}^N_{+},\ \ |q| \mbox{ odd}. \end{equation*}
In particular, the third order derivative is zero. However, we are interested in the further next order approximation 
by calculating the fourth order derivatives of $\lambda_1(0)$, i.e.
$D^4_{klmn}\lambda_1(0)$; which is in general a non-positive definite tensor and can be defined as follows:\\
Following \eqref{fe2}, the second order derivative of the eigenvector $D^2_{kl}\varphi_{\omega,1}(\cdot;0)\in H^1_0(\omega)$ solves
\begin{equation*}\begin{aligned}
&\mathcal{A}_\omega D^2_{kl}\varphi_{\omega,1}(y;0)= -( a^{\omega}_{kl}(y) - m_{kl} )\varphi_{\omega,1}(y;0) - iC^{\omega}_k(D_l(\varphi_{\omega,1}(y,0)) - iC^{\omega}_l(D_k\varphi_{\omega,1}(y,0))\mbox{ in }\omega,\\
&D^2_{kl}\varphi_{\omega,1}(y;0) = 0 \quad\mbox{on }\partial\omega \quad\mbox{and}\quad \int_{\partial\omega} A_\omega(y)\nabla_y D^2_{kl}\varphi_{\omega,1}(y;0)\cdot\nu \ d\sigma= 0 .
\end{aligned}\end{equation*}
We call $D^2_{kl}\varphi_{\omega,1}(y;0) = |\omega|^{-1/2}w_{kl}(y)$ and let us define 
\begin{align*}
C_\omega &=\eta_k C^{\omega}_k \quad\mbox{ with }\quad C^{\omega}_k(\varphi) = - a^{\omega}_{kj}(y)\frac{\partial \varphi}{\partial y_j} - \frac{\partial}{\partial y_j}(a^{\omega}_{kj}(y)\varphi), \\
X^{(1)}_\omega &= \eta_k(w_{e_k}(y)-y_k),\quad X^{(2)}_\omega = \eta_k\eta_lw_{kl},\quad  \widetilde{A}_\omega= \eta_k\eta_j a^{\omega}_{kj},\quad \widetilde{M}=\eta_k\eta_j m_{kj},
\end{align*}
where they satisfy
\begin{equation*}\begin{aligned}
-div(A_\omega\nabla X^{(1)}_\omega) &= \eta_k\frac{\partial a^{\omega}_{kl}}{\partial y_l} \mbox{ in }\omega, \\
 X^{(1)}_\omega= 0\mbox{ on }\partial\omega\quad&\mbox{and}\quad \int_{\partial\omega} A_\omega\lb \nabla X^{(1)}_\omega + \eta \rb\cdot\nu\ \ d\sigma  = 0
\end{aligned}\end{equation*} 
and  
\begin{equation*}\begin{aligned}
-div(A_\omega\nabla X^{(2)}_\omega) &= (\widetilde{A}_\omega-\widetilde{M}) - C_{\omega}X^{(1)}_\omega \mbox{ in }\omega,\\
X^{(2)}_\omega = 0\mbox{ on }\partial\omega\quad\mbox{and}&\quad \int_{\partial\omega} A_\omega\nabla X^{(2)}_\omega\cdot\nu\ \ d\sigma  = 0.
\end{aligned}\end{equation*}
Then, by summation, following \cite[Proposition 3.2]{COVB}, it can be shown 
that the following expression defines the fourth order derivative of $\lambda_1(\eta)$ at $\eta=0$:
\begin{equation}\begin{aligned}\label{exp4}
\frac{1}{4!}D^4_{klmn}\lambda_1(0)\eta_k\eta_l\eta_m\eta_n &= -\frac{1}{|\omega|}\int_{\omega} \mathcal{A}_\omega\lb X^{(2)}_\omega - \frac{1}{2}(X^{(1)}_\omega)^2\rb \lb X^{(2)}_\omega - \frac{1}{2}(X^{(1)}_\omega)^2\rb  dy \\
&\leq 0.
\end{aligned}\end{equation}
This tells us that $\lambda_1^{(4)}(\eta)$ at $\eta=0$ is a non-positive definite tensor.
\begin{remark}
In the case of spherical inclusions with two phase materials, i.e.  $\omega=B(0,1)$ and
$A_\omega(y) =\ (\alpha \chi_{B(0,R)}(y) + \beta (1-\chi_{B(0,R)}(y))I$, $y\in B(0,1)$, $R < 1$, for any $l\in \mathbb{Z}^N_{+}$ with $|l|\geq 2$, $D^l\varphi_{\omega,1}(y;0)\in H^1_0(B(0,1))$ solving the Dirichlet boundary value problem \eqref{fe2}  
also satisfies       
\begin{equation}\label{fe3}
 A_\omega(y)\nabla_y D^l\varphi_{\omega,1}(y;0) \cdot \nu = 0 \quad\mbox{on }\partial B(0,1).
\end{equation}
Then the boundary flux/co-normal derivative on the boundary vanishes point-wise.\\
Here we make the following ansatz: for $l =(l_1,l_2,..,l_N)\in \mathbb{Z}^N_{+}$ with $|l| \geq 2,$
$$D^l\varphi_{\omega,1}(y;0) = \sum_{i=1}^N \lb \prod_{j=1}^i y_{l_j} \rb f_i(r) + g(r)\in H^1_0(B(0,1)),$$
which solves the Dirichlet boundary value problem \eqref{fe2} in $B(0,1)$ together with \eqref{fe3}.\\
In particular, for $|l|=2$, we seek
$$ D^2_{kl}\varphi_{\omega,1}(y;0) = y_ky_l f(r) + g(r),$$
with 
$$ f(r) = b + \frac{c}{r^N} + \frac{d}{r^{N+2}}, \quad g(r) = \frac{p}{r^N} + qr^2 + t,$$
where $(b,c,d)$, $(p,q,t)$ are constants and can be found explicitly in terms of $\alpha,\beta,N,\ \theta\ (=R^N)$. \\
For details, see \cite{TV2}.
\end{remark}
\section{The corresponding eigenvalue and eigenvector at $\epsilon_{p,n}$-scale, $y^{p,n}$-translation and Bloch transform}\label{nl10}
\setcounter{equation}{0}
We introduce the operator $\mathcal{A}_\omega^{n}$ motivated from the Hashin-Shtrikman construction
$$\mathcal{A}_\omega^{n} = -\frac{\partial}{\partial x_k}\Big(a^{n}_{kl}(x)\frac{\partial}{\partial x_l}\Big)
\ \mbox{ with  }a_{kl}^{n}(x) =\ a^{\omega}_{kl}\Big(\frac{x-y^{p,n}}{\epsilon_{p,n}}\Big)\ \mbox{ in } \epsilon_{p,n}\omega + y^{p,n} \mbox{ a.e. on } \Omega,$$
where $ meas\big(\Omega \smallsetminus \underset{p\in K}{\cup} (\epsilon_{p,n}\omega + y^{p,n})\big) = 0,$ with 
$\kappa_n = \underset{p\in K}{sup}\hspace{2pt} \epsilon_{p,n}\rightarrow 0$ 
for a finite or countable $K$ and, for each $n$, the sets 
$\epsilon_{p,n}\omega + y^{p,n},\ p\in K$ are disjoint.\\
\\
We obtain the spectral resolution of $\mathcal{A}_\omega^{n}$ for a fixed $n$, in each $\{\epsilon_{p,n}\omega +y^{p,n}\}_{p\in K}$ domain in an analogous manner.  
We introduce the following shifted operator 
\begin{equation*} 
(\mathcal{A}_\omega^{n,p})(\xi)=\ -\Big(\frac{\partial}{\partial x_l} + i\xi_l\Big)\Big( a^{\omega}_{kl}\big(\frac{x-y^{p,n}}{\epsilon_{p,n}}\big)\big(\frac{\partial}{\partial x_k} +i\xi_k\big) \Big),\quad x\in \epsilon_{p,n}\omega +y^{p,n}.
\end{equation*}
By homothecy, for a fixed $n$ and for each $p$, we define the first Bloch eigenvalue $\lambda_1^{n,p}(\xi)$ 
and the corresponding Bloch mode $\varphi_{\omega,1}^{n,p}(\cdot;\xi)$ for the operator $(\mathcal{A}_\omega^{n,p})(\xi)$ for $\xi \in \kappa_n^{-1}\omega^{\prime}$ as follows:
\begin{equation*}
\lambda_1^{n,p}(\xi) := \epsilon_{p,n}^{-2}\lambda_1(\epsilon_{p,n} \xi),\quad 
\varphi^{n,p}_{\omega,1}(x;\xi):= \varphi_{\omega,1}\Big(\frac{x-y^{p,n}}{\epsilon_{p,n}}; \epsilon_{p,n}\xi\Big) \quad \mbox{for } x\in\epsilon_{p,n}\omega + y^{p,n},
\end{equation*}
where $\lambda_1(\eta)$ and $\varphi_{\omega,1}(y;\eta)$ are the eigenelements defined in Section \ref{nl15}. \\
\\
This leads to define the following Bloch transformation in $L^2(\mathbb{R}^N)$ in the following manner:
\begin{proposition}\label{prop1}
\hfill
\begin{enumerate}
\item For $g \in L^2(\mathbb{R}^N)$, for each $n$, the following limit in the space $L^2(\kappa_n^{-1}\omega^{\prime})$ exists:
\begin{equation}\label{btype}
B^{n}_1 g(\xi): = B^{(\epsilon_{p,n},\ y^{p,n})}_1 g(\xi) := \sum_p\int_{(\epsilon_{p,n}\omega + y^{p,n})} g(x)e^{-ix\cdot\xi}\overline{\varphi_{\omega,1}}\Big(\frac{x-y^{p,n}}{\epsilon_{p,n}};\epsilon_{p,n}\xi\Big)dx,
\end{equation}
where for each $n$,  $ meas( \mathbb{R}^N \smallsetminus \underset{p\in K}{\cup} (\epsilon_{p,n}\omega + y^{p,n})) = 0,$ 
with $\kappa_n = \underset{p\in K}{sup}\hspace{2pt} \epsilon_{p,n}\rightarrow 0$ 
for a finite or countable $K$ and the sets $\epsilon_{p,n}\omega + y^{p,n},\ p\in K$ are disjoint.\\
\\
The above definition \eqref{btype} is the corresponding first Bloch transformation governed with
Hashin-Shtrikman micro-structures.
\item We have the following Bessel inequality for elements of $L^2(\mathbb{R}^N)$:
\begin{equation}\label{basel}
\int_{\kappa_n^{-1}\omega^{\prime}} |B^{n}_1 g(\xi)|^2 d\xi \leq \mathcal{O}(1) ||g||^2_{L^2(\mathbb{R}^N)}.
\end{equation}
\item For $g\in H^1(\mathbb{R}^N)$, we have
\begin{equation}\label{rln}
B^{n}_1 \lb\mathcal{A}_\omega^{n}g(\xi)\rb := \sum_p\int_{(\epsilon_{p,n}\omega + y^{p,n})}\lambda_1^{n,p} g(x)e^{-ix\cdot\xi}\overline{\varphi_{\omega,1}}\Big(\frac{x-y^{p,n}}{\epsilon_{p,n}};\epsilon_{p,n}\xi\Big)dx.
\end{equation}
\end{enumerate}
\end{proposition}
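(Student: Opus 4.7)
I treat the three parts in their natural order; both (1) and (3) lean on the Bessel estimate in (2).

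Writing $I_p(\xi)$ for the integral over the $p$-th piece in \eqref{btype} and enumerating $K$, the disjointness of $\{\epsilon_{p,n}\omega + y^{p,n}\}_p$ gives $\sum_{p=N+1}^{N'} I_p = B_1^n\bigl(g\cdot\mathbf{1}_{\bigcup_{N<p\leq N'}(\epsilon_{p,n}\omega+y^{p,n})}\bigr)$, so once the Bessel bound (2) is in place,
\[
\bigl\|S_{N'}^n g - S_N^n g\bigr\|^2_{L^2(\kappa_n^{-1}\omega')} \leq \mathcal{O}(1)\,\|g\|^2_{L^2\bigl(\bigcup_{N<p\leq N'}(\epsilon_{p,n}\omega+y^{p,n})\bigr)} \xrightarrow[N,N'\to\infty]{} 0
\]
by dominated convergence, so the partial sums $S_N^n g := \sum_{p\leq N}I_p$ are Cauchy in $L^2(\kappa_n^{-1}\omega')$ and (1) follows.

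For part (2), I unfold each $I_p$ via $y = (x-y^{p,n})/\epsilon_{p,n}$ and $\eta = \epsilon_{p,n}\xi$ to get
\[
I_p(\xi) = \epsilon_{p,n}^N\,e^{-iy^{p,n}\cdot\xi}\,\bigl(g_p,\psi_{\omega,1}(\cdot;\eta)\bigr)_{L^2(\omega)},
\]
where $g_p(y) := g(\epsilon_{p,n}y+y^{p,n})$ and $\psi_{\omega,m}(y;\eta) := e^{iy\cdot\eta}\varphi_{\omega,m}(y;\eta)$. Since multiplication by $e^{iy\cdot\eta}$ is a unitary on $L^2(\omega)$, Theorem~3.1(iii) promotes $\{\psi_{\omega,m}(\cdot;\eta)\}_{m\geq 1}$ to an orthonormal basis of $L^2(\omega)$, and Parseval yields the pointwise diagonal bound $|I_p(\xi)|^2 \leq \epsilon_{p,n}^N\,\|g\|^2_{L^2(\epsilon_{p,n}\omega+y^{p,n})}$; integration in $\xi$ then gives $\sum_p\int_{\kappa_n^{-1}\omega'}|I_p|^2\,d\xi \leq |\omega'|\,\|g\|^2_{L^2(\mathbb{R}^N)}$. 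To absorb the cross terms $\int I_p\overline{I_q}\,d\xi$ for $p\neq q$, I plan to exploit the oscillating phase $e^{-i(y^{p,n}-y^{q,n})\cdot\xi}$: the analyticity of $\eta\mapsto\varphi_{\omega,1}(\cdot;\eta)$ from Section~3 supplies enough $\xi$-regularity to integrate by parts in $\xi$ any number of times, producing decay $\lesssim |y^{p,n}-y^{q,n}|^{-M}$ that a Schur-type summation converts to the required $\mathcal{O}(1)\|g\|^2$ bound.

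For part (3), set $\Xi(x;\xi) := e^{ix\cdot\xi}\varphi_{\omega,1}^{n,p}(x;\xi)$ on the $p$-th piece. The boundary normalization of Section~4 enforces the common trace $|\omega|^{-1/2}e^{ix\cdot\xi}$ on every piece boundary, so $\Xi(\cdot;\xi)\in H^1_{\mathrm{loc}}(\mathbb{R}^N)$, while inside each piece one has $\mathcal{A}_\omega^{n}\Xi = \lambda_1^{n,p}(\xi)\,\Xi$. For compactly supported $g\in H^1(\mathbb{R}^N)$, duality and the piecewise decomposition give
\[
B^n_1(\mathcal{A}_\omega^{n} g)(\xi) = \int_{\mathbb{R}^N} A_\omega^{n}\nabla g\cdot\overline{\nabla\Xi}\,dx = \sum_p\int_{\epsilon_{p,n}\omega+y^{p,n}} A_\omega^{n}\nabla g\cdot\overline{\nabla\Xi}\,dx,
\]
and integrating by parts on each piece (moving the derivative onto $\Xi$) produces the volume terms $\sum_p\lambda_1^{n,p}(\xi)\,I_p(\xi)$ plus the boundary contributions $\sum_p\int_{\partial(\epsilon_{p,n}\omega+y^{p,n})} g\,A_\omega^{n}\overline{\nabla\Xi}\cdot\nu\,d\sigma$; these vanish by combining the (conjugated) flux condition from \eqref{NE1} with $\sum_p\int_{\partial(\epsilon_{p,n}\omega+y^{p,n})} e^{-ix\cdot\xi}A_\omega^{n}\nabla g\cdot\nu\,d\sigma = 0$, itself obtained by applying the divergence theorem on $\mathbb{R}^N$ to the compactly supported vector field $e^{-ix\cdot\xi}A_\omega^{n}\nabla g$. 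The general case $g\in H^1(\mathbb{R}^N)$ follows by density.

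\emph{Main obstacle.} The crux is absorbing the off-diagonal contributions in part (2): the Parseval identity controls only the diagonal, and because $\sum_p\epsilon_{p,n}^N$ diverges (the pieces tile $\mathbb{R}^N$), pointwise triangle or Cauchy--Schwarz bounds on $\sum_p I_p(\xi)$ are not summable. Carrying out the oscillatory-integral cancellations uniformly over a Vitali covering containing arbitrarily many scales $\epsilon_{p,n}$ is the most intricate step.
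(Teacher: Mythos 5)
Your proposal does not prove the proposition: the Bessel inequality \eqref{basel}, which is the heart of the statement (and on which your own proofs of parts (1) and (3) are made to rest), is left unestablished. You control only the diagonal sum $\sum_p\int_{\kappa_n^{-1}\omega'}|I_p|^2\,d\xi$ and defer the off-diagonal terms to a non-stationary-phase ``plan'' that you yourself flag as the main obstacle. As described, that plan would fail: in a Vitali covering the dominant pairs $(p,q)$ are touching cells, whose centers satisfy $|y^{p,n}-y^{q,n}|\lesssim \epsilon_{p,n}+\epsilon_{q,n}\le 2\kappa_n$, so over the dual cell $\kappa_n^{-1}\omega'$ the phase $(y^{p,n}-y^{q,n})\cdot\xi$ varies by at most $\mathcal{O}(1)$ (and by $o(1)$ for small adjacent cells); there is simply no oscillation to integrate by parts against for the pairs that matter, quite apart from the boundary terms produced by integrating by parts in $\xi$ over a bounded set. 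A Schur summation over polynomially decaying kernels in $|y^{p,n}-y^{q,n}|$ therefore cannot close the argument. For comparison, the paper does not attempt any diagonal/off-diagonal splitting or almost-orthogonality at all: it applies the Cauchy--Schwarz inequality directly to the full sum over the disjoint cells in \eqref{btype}, and uses the uniform bound $\sup_{\eta\in\omega'}\|\varphi_{\omega,1}(\cdot;\eta)\|_{L^2(\omega)}<\infty$ (a consequence of analyticity near $\eta=0$) together with the scaling $\int_{\epsilon_{p,n}\omega+y^{p,n}}|\varphi_{\omega,1}(\tfrac{x-y^{p,n}}{\epsilon_{p,n}};\epsilon_{p,n}\xi)|^2dx=\epsilon_{p,n}^N\,\mathcal{O}(1)$, first for $g$ in a dense class; your change of variables and diagonal estimate are consistent with this, but the cross-term programme is an added difficulty of your own making that you then do not resolve. (A minor point: Theorem 3.1(iii) gives an orthonormal basis of $L^2_c(\omega)$, not of $L^2(\omega)$; for the single first mode you only need Cauchy--Schwarz anyway.)

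Part (3) also has a flaw in the cancellation step. After moving derivatives onto $\Xi(x;\xi)=e^{ix\cdot\xi}\varphi_{\omega,1}^{n,p}(x;\xi)$ the interface contribution is $\sum_p\int_{\partial(\epsilon_{p,n}\omega+y^{p,n})}g\,\overline{A^n_\omega\nabla\Xi\cdot\nu}\,d\sigma$, in which the co-normal flux of $\Xi$ is weighted by the non-constant factor $g(x)e^{-ix\cdot\xi}$. The flux condition in \eqref{NE1}/\eqref{comb} is only an integral (average) condition over $\partial\omega$, so it cannot annihilate such weighted boundary integrals; and the auxiliary identity $\sum_p\int_{\partial(\epsilon_{p,n}\omega+y^{p,n})}e^{-ix\cdot\xi}A^n_\omega\nabla g\cdot\nu\,d\sigma=0$ you invoke is neither well defined for general $g\in H^1$ (the flux $A^n_\omega\nabla g\cdot\nu$ has no trace) nor the term that actually appears. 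What would be needed is pairwise cancellation of the co-normal fluxes of $e^{ix\cdot\xi}\varphi^{n,p}_{\omega,1}$ and $e^{ix\cdot\xi}\varphi^{n,q}_{\omega,1}$ across each shared interface, and you have not established that; the paper itself justifies \eqref{rln} only by a one-line appeal to integration by parts combined with the identity $\mathcal{A}_\omega\big(e^{iy\cdot\eta}\varphi_{\omega,1}(y;\eta)\big)=\lambda_1(\eta)e^{iy\cdot\eta}\varphi_{\omega,1}(y;\eta)$, so your instinct about which identity to use is right, but the specific boundary-term argument you give does not hold up as written.
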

\begin{remark}
 For each fixed $n$, the Bloch transform $B_1^n\ =\ B_1^{(\epsilon_{p,n},\ y^{p,n})}$ depends on the choice of 
 Vitali covering. However, in the rest of the paper, we will be using the short notation 
 $B_1^n$ instead of $B_1^{(\epsilon_{p,n},\ y^{p,n})}$.
\end{remark}
\begin{remark}
Spectral decomposition of operators exploiting their group invariance is a classical topic 
in Harmonic analysis. It uses `Group Representation Theory'; while it is 
successfully applied to periodic structures to generate full basis of eigenvectors,
its applicability to Hashin-Shtrikman structures is open.\\
However, for $\xi\in \kappa_n^{-1}\omega^{\prime}$ fixed and for each fixed $n$, 
one has this following spectral decomposition  
$$L^2(\mathbb{R}^N) =\ \bigoplus_{p} L^2(\epsilon_{p,n}\omega + y^{p,n},\epsilon_{p,n}\xi). $$  
Which simply follows from the Theorem \ref{deH} and it is not considered to be the 
desired Bloch spectral decomposition of $L^2(\mathbb{R^N})$.
\end{remark}
\begin{proof}[Proof of Proposition \ref{prop1}]
We notice that
\begin{align*}
&\eta\in \omega^{\prime} \longmapsto ||\varphi_{\omega,1}(\cdot;\eta)||^2_{L^2(\omega)} \mbox{ is analytic. So, } ||\varphi_{\omega,1}(\cdot;\eta)||^2 \mbox{ is bounded on }\omega^{\prime}.\\
\mbox{Thus, }\ \ &\int_{\omega^{\prime}}\int_{\omega}|\varphi_{\omega,1}(y;\eta)|^2 dy d\eta =\ \mathcal{O}(1). 
\end{align*}
Let $g$ be an element of a dense subset of $L^2(\mathbb{R}^N)$. Then, from the definition of $B^{(\epsilon_{p,n},\ y^{p,n})}_1 g(\xi),$ we get 
\begin{align*}
|B^{(\epsilon_{p,n},\ y^{p,n})}_1 g(\xi)| &
\leq\sum_p\int_{\epsilon_{p,n}\omega + y^{p,n}} |g(x)|\Big|{\varphi_{\omega,1}}\Big(\frac{x-y^{p,n}}{\epsilon_{p,n}};\epsilon_{p,n}\xi\Big)\Big|dx \\
&\leq \sum_p\lb \int_{\epsilon_{p,n}\omega + y^{p,n}} |g(x)|^2 dx \rb^{\frac{1}{2}}\lb \int_{\epsilon_{p,n}\omega + y^{p,n}}  \Big|{\varphi_{\omega,1}}\Big(\frac{x-y^{p,n}}{\epsilon_{p,n}};\epsilon_{p,n}\xi\Big)\Big|^2 dx\rb^{\frac{1}{2}} \\
&\leq \mathcal{O}(1)||g||_{L^2(\mathbb{R}^N)}.
\end{align*}
Moreover,
\begin{align*}
\int_{\epsilon_{p,n}^{-1}\omega^{\prime}}|B^{(\epsilon_{p,n},\ y^{p,n})}_1& g(\xi)|^2 d\xi 
=\ \int_{\kappa_n^{-1}\omega^{\prime}}\lb\sum_p\int_{\epsilon_{p,n}\omega + y^{p,n}} g(x)e^{ix\cdot\xi}{\varphi_{\omega,1}}\Big(\frac{x-y^{p,n}}{\epsilon_{p,n}};\epsilon_{p,n}\xi\Big)dx \rb^2 d\xi \\
\leq& \lb\sum_p\int_{\epsilon_{p,n}\omega + y^{p,n}} |g(x)|^2 dx\rb \lb \int_{\kappa_n^{-1}\omega^{\prime}}\sum_p\int_{\epsilon_{p,n}\omega + y^{p,n}}\Big|{\varphi_{\omega,1}}\Big(\frac{x-y^{p,n}}{\epsilon_{p,n}};\epsilon_{p,n}\xi\Big)\Big|^2 dx d\xi \rb\\
\leq&\ \mathcal{O}(1)||g||_{L^2(\Omega)},
\end{align*}
which shows that \eqref{btype} is well defined and belongs to $L^2(\kappa_n^{-1}\omega^{\prime})$
with satisfying \eqref{basel}.\\
The last part \eqref{rln} follows simply from performing integration by parts together with the identity 
\begin{equation*}
\mathcal{A}_\omega(e^{iy\cdot\eta}\varphi_{\omega,1}(y;\eta)) = e^{iy\cdot\eta} \mathcal{A}_\omega(\eta)\varphi_{\omega,1}(y;\eta) = \lambda_1(\eta)e^{iy·\eta}\varphi_{\omega,1}(y;\eta).
\end{equation*}
\hfill\end{proof}
\begin{remark}
One can also define the Bloch transform \eqref{btype} for $H^{-1}(\mathbb{R}^N)$ elements.
Let's consider $F \equiv u^0 + \sum_{j=1}^N \frac{\partial u^j}{\partial x_j}$ $\in H^{-1}(\mathbb{R}^N)$,
then we define $B^{(\epsilon_{p,n},\ y^{p,n})}_1 F(\xi)$ in 
$L^2(\kappa_n^{-1}\omega^{\prime})$ space by using the duality
\begin{equation}\begin{aligned}\label{hm1}
&B^{(\epsilon_{p,n},\ y^{p,n})}_1 F(\xi) := \underset{R \rightarrow \infty}{lim}\Bigg[ \sum_p\int_{(\epsilon_{p,n}\omega + y^{p,n})\cap |x|\leq R} \Big\{u^0(x)e^{-ix\cdot\xi}\overline{\varphi_{\omega,1}^{n,p}}(x;\xi) \\
& + i \sum_{j=1}^N \xi_j u^j(x)\overline{\varphi_{\omega,1}^{n,p}}(x;\xi) \Big\} dx-  \sum_p\int_{(\epsilon_{p,n}\omega + y^{p,n})\cap |x|\leq R}  e^{-ix\cdot\xi}\sum_{j=1}^N u^j(x)\frac{\partial\overline{\varphi_{\omega,1}^{n,p}}}{\partial x_j}(x;\xi)dx\Bigg]. 
\end{aligned}
\end{equation}
The definition is independent of the representation used for $F$ and is consistent with 
the fact that $F\in L^2(\mathbb{R}^N)$.
\end{remark}
\begin{remark}
By  homothecy, because we have $\lambda_1^{n,p}(\xi) = \epsilon_{p,n}^{-2} \lambda_1(\epsilon_{p,n}\xi)$, using the Taylor expansion of $\lambda_1(\eta)$ at $\eta=0$,
we get
\begin{equation*}
\lambda_1^{n,p}(\xi) = \frac{1}{2!}\frac{\partial^2\lambda_1}{\partial\eta_k\partial\eta_l}(0)\xi_k\xi_l + \epsilon_{p,n}^2\frac{1}{4!}\frac{\partial^4\lambda_1}{\partial\eta_k\partial\eta_l\partial\eta_m\partial\eta_n}(0)\xi_k\xi_l\xi_m\xi_n + \mathcal{O}(\epsilon_{p,n}^2).
\end{equation*}
Thus, from \eqref{rln}, we write
\begin{equation*}\begin{aligned}
B^{n}_1\lb\mathcal{A}_\omega^{n}g(\xi)\rb &= \sum_p\int_{\epsilon_{p,n}\omega + y^{p,n}}\lambda_1^{n,p} g(x)e^{-ix\cdot\xi}\overline{\varphi_{\omega,1}}\Big(\frac{x-y^{p,n}}{\epsilon_{p,n}};\epsilon_{p,n}\xi\Big)dx\\
=&\ \frac{1}{2!}\frac{\partial^2\lambda_1}{\partial\eta_k\partial\eta_l}(0)\xi_k\xi_l\hspace{2pt} B^{n}_1g(\xi) + \frac{1}{4!}\frac{\partial^4\lambda_1}{\partial\eta_k\partial\eta_l\partial\eta_m\partial\eta_n}(0)\xi_k\xi_l\xi_m\xi_n\cdot \\
 &\cdot\sum_p\int_{\epsilon_{p,n}\omega + y^{p,n}}\epsilon_{p,n}^2 g(x)e^{-ix\cdot\xi}\overline{\varphi_{\omega,1}}\Big(\frac{x-y^{p,n}}{\epsilon_{p,n}};\epsilon_{p,n}\xi\Big)dx + o(\kappa_n^2).
\end{aligned}\end{equation*}
Here in the above expression, the first term or the second order approximation provides 
the homogenized coefficients. The second term provides the fourth order approximation
related to the dispersion tensor or  \textit{Burnett coefficient}
in  Hashin-Shtrikman structures. For more details, see \cite{TV2}. 
\end{remark}

\subsection{First Bloch transform $B_1^{n}$ tends to Fourier transform}
 
Here we establish the limiting association of 
the first Bloch transform $B_1^{n}g(\xi)$ of $g\in L^2(\mathbb{R}^N)$ 
associated with the Hashin-Shtrikman structures
and the Fourier transform $\widehat{g}(\xi)$ which is 
similarly associated with a homogeneous medium. 
\begin{theorem}\label{t7}
\noindent
\begin{enumerate}
 \item  If $g_{n} \rightharpoonup g$ in $L^2(\mathbb{R}^N)$ weak, then
$\chi_{\kappa_n^{-1}\omega^{\prime}}(\xi)B_1^{n}g^{n}(\xi) \rightharpoonup \widehat{g}(\xi)$
in $L^2(\mathbb{R}^N)$ weak provided there is a fixed compact  $R$ 
such that support of $ g^n\subseteq R\quad\forall n.$ 

\item  If $g_{n} \rightarrow g$ in $L^2(\mathbb{R}^N)$ strong, then for the subsequence $\epsilon_{p,n}$
$\chi_{\kappa_n^{-1}\omega^{\prime}}(\xi)B_1^{n}g^n \rightarrow \widehat{g}(\xi)$
in $L^2_{loc}(\mathbb{R}^N).$
\end{enumerate}
\end{theorem}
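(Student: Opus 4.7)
The plan is to Taylor-expand the Bloch eigenvector at the origin. By the analyticity established in Section 3.3, one writes $\varphi_{\omega,1}(y;\eta) = |\omega|^{-1/2} + \rho(y;\eta)$ with $\|\rho(\cdot;\eta)\|_{L^2(\omega)} \leq C|\eta|$ on a neighborhood of $0$. Substituting into \eqref{btype} produces the decomposition
\begin{equation*}
B_1^n g^n(\xi) = |\omega|^{-1/2}\,\widehat{g^n}(\xi) + E_n(\xi),
\end{equation*}
since $\{\epsilon_{p,n}\omega + y^{p,n}\}_{p\in K}$ tiles $\mathbb{R}^N$ up to null sets and the principal term collapses to the ordinary Fourier integral $\int_{\mathbb{R}^N}g^n(x)e^{-ix\cdot\xi}\,dx$. (I read the statement modulo the normalization constant $|\omega|^{-1/2}$, which can be absorbed into the definition of $\widehat{\cdot}$.)

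For Part (1), rescaling the $L^2(\omega)$-bound on $\rho$ to each cell gives $\|\rho(\cdot;\epsilon_{p,n}\xi)\|_{L^2(\epsilon_{p,n}\omega+y^{p,n})} \leq C\epsilon_{p,n}^{1+N/2}|\xi|$. Cauchy--Schwarz cell-by-cell, followed by Cauchy--Schwarz over the $p$'s indexing the finitely many cells meeting the fixed compact support $R$ of $g^n$, yields
\begin{equation*}
|E_n(\xi)| \leq C|\xi|\,\kappa_n\,\|g^n\|_{L^2(\mathbb{R}^N)}\,\left(\frac{|R|}{|\omega|}\right)^{1/2} \longrightarrow 0
\end{equation*}
uniformly on compact sets in $\xi$. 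The principal term $|\omega|^{-1/2}\widehat{g^n}$ passes to the weak limit $|\omega|^{-1/2}\widehat{g}$ by Plancherel (Fourier is an $L^2$-isometry, hence preserves weak convergence), and the cutoff $\chi_{\kappa_n^{-1}\omega^{\prime}}$ tends to $1$ pointwise since $\kappa_n\to 0$, which is harmless when tested against functions in $C^\infty_c(\mathbb{R}^N)$.

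For Part (2), the hypothesis of strong $L^2$-convergence no longer supplies uniformly compact support, so I truncate. Given $\delta>0$, pick $R$ large with $\|(1-\chi_R)g\|_{L^2}<\delta$ and split $g^n = \chi_R g^n + (1-\chi_R)g^n$. The truncated pieces have fixed compact support $B(0,R)$, so Part (1)'s quantitative error bound applies; combined with strong $L^2$-convergence of $\widehat{\chi_R g^n}$ (Plancherel), this gives strong $L^2(K)$-convergence of $\chi_{\kappa_n^{-1}\omega^{\prime}} B_1^n(\chi_R g^n)$ on any compact $K\subset\mathbb{R}^N$. The tails are controlled by the Bessel inequality \eqref{basel}: $\|B_1^n((1-\chi_R)g^n)\|_{L^2(\kappa_n^{-1}\omega^{\prime})} \leq C\|(1-\chi_R)g^n\|_{L^2} \leq C\delta$ for $n$ large, and similarly for $\widehat{(1-\chi_R)g}$ via Plancherel. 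Sending $\delta\to 0$ closes the argument; a subsequence extraction may be needed to stabilize diagonal errors.

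The main obstacle is bounding $E_n$ uniformly: a naive sum $\sum_p \epsilon_{p,n}^N$ diverges over $\mathbb{R}^N$ (infinite measure), so the compact-support hypothesis in (1) and the truncation in (2) are both essential for confining the sum to finitely many active cells. The other delicate point is that one genuinely needs the analyticity of the ground state near $\eta=0$ (not just continuity): it is precisely the quantitative $|\eta|$-sized smallness of $\rho$, together with the small scale $\kappa_n$, that allows the error $E_n$ to be beaten.
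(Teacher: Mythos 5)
Your argument is correct and follows essentially the same route as the paper: split $B_1^n g^n$ into the Fourier-transform term coming from $\varphi_{\omega,1}(\cdot;0)=|\omega|^{-1/2}$ plus an error controlled by the analyticity bound $\|\varphi_{\omega,1}(\cdot;\eta)-\varphi_{\omega,1}(\cdot;0)\|_{L^2(\omega)}\leq C|\eta|$, giving an error of size $O(\kappa_n|\xi|\,\|g^n\|_{L^2})$ that vanishes locally uniformly, and then handle Part (2) by truncation/density together with the Bessel inequality \eqref{basel}. Your cell-by-cell estimate (keeping the $\epsilon_{p,n}^{N/2}$ Jacobian factor and the Cauchy--Schwarz over the finitely many cells meeting $R$) is in fact a slightly more careful version of the paper's own bound, but the substance of the proof is the same.
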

\begin{proof}
1. Let us consider a Vitali covering for $R$ to write as 
$R \approx \underset{p\in K}{\cup} (\epsilon_{p,n}\omega + y^{p,n})$, 
with $\kappa_n = \underset{p\in K}{sup}\hspace{2pt} \epsilon_{p,n}\rightarrow 0$ 
for a finite or countable $K$ and, for each $n$, the sets $\epsilon_{p,n}\omega + y^{p,n},\ p\in K$ are disjoint. \\
Then, from the definition of $B^{(\epsilon_{p,n},\ y^{p,n})}_1 g(\xi)$  for $\xi$ in $\kappa_n^{-1}\omega^{\prime}$, 
\begin{align*}
B_1^{n}g^n(\xi) =& \sum_p \int_{\epsilon_{p,n}\omega + y^{p,n}} g^n(x)e^{-ix\cdot\xi}\overline{\varphi_{\omega,1}}\Big(\frac{x-y^{p,n}}{\epsilon_{p,n}};0\Big)dx\\
            &+ \sum_p \int_{\epsilon_{p,n}\omega + y^{p,n}} g^{n}(x)e^{-ix\cdot\xi}\lb\overline{\varphi_{\omega,1}}\Big(\frac{x-y^{p,n}}{\epsilon_{p,n}};\epsilon_{p,n}\xi\Big)- \overline{\varphi_{\omega,1}}\Big(\frac{x-y^{p,n}}{\epsilon_{p,n}}; 0\Big)\rb dx. 
\end{align*}
Since $\varphi_{\omega,1}(y;0) = |\omega|^{-1/2}$, the first term of the above identity  is nothing else that the Fourier transform of
$g^n$ and so it converges to $g$ in $L^2(\mathbb{R})$ weak.
For the second term, we apply the Cauchy-Schwarz inequality to bound it from above
as follows:\begin{align*} \mbox{ Second term } \leq&\ \sum_p \|g^n\|_{L^2(\epsilon_{p,n}\omega + y^{p,n})} \Big\|\varphi_{\omega,1}\Big(\frac{x-y^{p,n}}{\epsilon_{p,n}};\epsilon_{p,n}\xi\Big)- \varphi_{\omega,1}\Big(\frac{x-y^{p,n}}{\epsilon_{p,n}}; 0\Big)\Big\|_{L^2(\epsilon_{p,n}\omega +y^{p,n})}\\
                                      \leq&\ \sum_p \|g^n\|_{L^2(\epsilon_{p,n}\omega + y^{p,n})}\cdot C\epsilon_{p,n}|\xi| \\
                                         \leq&\ \kappa_n |\xi|\|g^n\|_{L^2(\mathbb{R}^N)}.
\end{align*}                                         
The second inequality follows simply due to the analyticity of $\varphi_{\omega,1}(\cdot;\eta) \in L^2 (\omega)$ near $\eta=0$.
And thus, it converges to zero in $L^{\infty}_{loc}(\mathbb{R}^N_{\xi})$.
This completes the proof of $1$.\\
\\
2. Let us first consider the case 
$g^n = g$, where $g \in L^2(\mathbb{R}^N)$ is with compact support. 
Following the
proof of $1$, we have $B_1^{n}g \rightarrow g$ in $L^2_{loc}(\mathbb{R}^N)$ strong.\\
\\
In general, we introduce the operator $B_1^{n} : L^2(\mathbb{R}^N) \to L^2(\mathbb{R}^N)$
and following the Basel inequality \eqref{basel}, 
it shows that $\|B_1^{n}\|_{L^2\mapsto L^2}\leq \mathcal{O}(1).$
Now, we complete the rest of the proof $2$ by density arguments.
Take $ g\in L^2(\mathbb{R}^N)$  arbitrary, 
we approximate it by $h\in L^2(\mathbb{R}^N)$ with compact support. 
Then, the desired result follows via triangle inequality applied to the relation
\begin{equation*}
B_1^{n}g - \widehat{g} = B_1^{n}(g - h) + (B_1^{n} h - \widehat{h}) + (\widehat{h} - \widehat{g} ).
\end{equation*}
Finally, if $g^n \rightarrow g$ in $L^2(\mathbb{R}^N)$ strong, then 
\begin{equation*}
B_1^{n}g^n - \widehat{g} = B_1^{n}(g^n- g) + (B_1^{n} g - \widehat{g})
\end{equation*}
shows that $B_1^{n}g_{n} \rightarrow g$ in $L^2_{loc}(\mathbb{R}^N)$. This completes the proof of $2$.
\hfill\end{proof}
\begin{remark}
The factor $\chi_{\kappa_n^{-1}\omega^{\prime}}(\xi)$  in the above result 
was merely used to extend the relevant
functions by zero outside their domain of definition. 
It did not play any part in the proof
because we are interested in local convergence.
\end{remark}

\section{Homogenization result}\label{nl16}
\setcounter{equation}{0}
The purpose of this section is to provide a proof of the main 
result of homogenization stated in preliminary section. 
It will be based on the tools whatever we have derived 
in the previous sections.
\begin{theorem}
Let us consider $\Omega$ be an open set in $\mathbb{R}^N$. 
We introduce the operator $\mathcal{A}_\omega^{n}$ governed with the Hashin-Shtrikman construction (Example \ref{hsm}):
$$\mathcal{A}_\omega^{n} = -\frac{\partial}{\partial x_k}\Big(a^{n}_{kl}(x)\frac{\partial}{\partial x_l}\Big)
\ \mbox{ with  }a_{kl}^{n}(x) =\ a^{\omega}_{kl}\Big(\frac{x-y^{p,n}}{\epsilon_{p,n}}\Big)\ \mbox{ in } \epsilon_{p,n}\omega + y^{p,n} \mbox{ a.e. on } \Omega,$$
where $ meas\big(\Omega \smallsetminus \underset{p\in K}{\cup} (\epsilon_{p,n}\omega + y^{p,n})\big) = 0,$ with 
$\kappa_n = \underset{p\in K}{sup}\hspace{2pt} \epsilon_{p,n}\rightarrow 0$ 
for a finite or countable $K$ and, for each $n,$ the sets 
$\epsilon_{p,n}\omega + y^{p,n},\ p\in K$ are disjoint.
And $A_\omega$ is equivalent to $M$ $\in L^{+}(\mathbb{R}^N;\mathbb{R}^N)$,
then extending $A_\omega$ by $A_\omega(x) = M$ for $x \in \mathbb{R}^N\smallsetminus \omega$ 
for all $\lambda \in \mathbb{R}^N$, there exists $w_{\lambda}\in  H^1_{loc}(\mathbb{R}^N)$ satisfying
\begin{equation*}
- div (A_\omega\nabla w_{\lambda}(y)) = 0 \quad\mbox{in }\mathbb{R}^N,\quad  w_{\lambda}(y) = (\lambda, y) \quad\mbox{in }\mathbb{R}^N \smallsetminus \omega.
\end{equation*}
Let $f \in L^2(\Omega)$ and $u^{n} \in H^1_0(\Omega)$ be the
unique solution of the boundary value problem
\begin{equation*} \mathcal{A}_\omega^{n}u^{n} = f \quad\mbox{in }\Omega.\end{equation*}
Then there exists $u\in H^1_{0}(\Omega)$ such that the sequence $u^{n}$ converges to $u$ in $H^1_{0}(\Omega)$ weak
with the following convergence of flux
\begin{equation*}
\sigma^{n}_\omega= A^{n}_\omega\nabla u^{n} \rightharpoonup M\nabla u =\sigma_\omega \quad\mbox{in } L^2(\Omega)^N\mbox{ weak. }
\end{equation*}
In particular, the limit $u$ satisfies the homogenized equation:
\begin{equation*}-\frac{\partial}{\partial x_k}\big(m_{kl}\frac{\partial}{\partial x_l}u\big) = f \quad\mbox{in }\Omega.\end{equation*}
\end{theorem}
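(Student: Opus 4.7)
The plan is to follow the Bloch strategy developed in the paper, in the spirit of Conca–Vanninathan's periodic analysis, supplemented by the oscillating test functions from Example~\ref{hsm} to identify the flux. First I would establish uniform bounds. Multiplying $\mathcal{A}_\omega^n u^n=f$ by $u^n$, integrating by parts, and exploiting the uniform coercivity $A_\omega^n\in\mathcal{M}(\alpha,\beta,\Omega)$ gives $\|u^n\|_{H^1_0(\Omega)}\le\alpha^{-1}\|f\|_{L^2(\Omega)}$, and hence $\|\sigma_\omega^n\|_{L^2(\Omega)^N}\le C$. By Rellich–Kondrachov, extract a subsequence with $u^n\rightharpoonup u$ weakly in $H^1_0(\Omega)$, $u^n\to u$ strongly in $L^2(\Omega)$, and $\sigma_\omega^n\rightharpoonup\sigma$ weakly in $L^2(\Omega)^N$. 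Passing to the limit in $-\mathrm{div}\,\sigma_\omega^n=f$ yields $-\mathrm{div}\,\sigma=f$ in $\Omega$. It remains to identify $u$ as the solution of the homogenized equation and $\sigma$ with $M\nabla u$.

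For the first identification I would extend $u^n$ and $f$ by zero to all of $\mathbb{R}^N$ (legal since $u^n\in H^1_0(\Omega)$) and enlarge the Vitali covering to one of $\mathbb{R}^N$, then apply the first Bloch transform $B_1^n$. Proposition~\ref{prop1}(3) gives
\[
B_1^n(\mathcal{A}_\omega^n u^n)(\xi)\;=\;\sum_p\int_{\epsilon_{p,n}\omega+y^{p,n}}\lambda_1^{n,p}(\xi)\,u^n(x)\,e^{-ix\cdot\xi}\,\overline{\varphi_{\omega,1}^{n,p}}(x;\xi)\,dx\;=\;B_1^nf(\xi).
\]
Using homothecy $\lambda_1^{n,p}(\xi)=\epsilon_{p,n}^{-2}\lambda_1(\epsilon_{p,n}\xi)$ and the derivative computations of Section~\ref{nl15} — namely $\lambda_1(0)=0$, $D_k\lambda_1(0)=0$, $\tfrac12 D^2_{kl}\lambda_1(0)=m_{kl}$, and the vanishing of the third-order derivative — the Taylor expansion yields $\lambda_1^{n,p}(\xi)=m_{kl}\xi_k\xi_l+\mathcal{O}(\epsilon_{p,n}^{2}|\xi|^{4})$ uniformly on $\xi$-compacts. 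Splitting the identity as
\[
m_{kl}\xi_k\xi_l\,B_1^nu^n(\xi)\;+\;\sum_p\int\bigl(\lambda_1^{n,p}(\xi)-m_{kl}\xi_k\xi_l\bigr)u^n(x)e^{-ix\cdot\xi}\overline{\varphi_{\omega,1}^{n,p}}(x;\xi)\,dx\;=\;B_1^nf(\xi),
\]
Theorem~\ref{t7}(2) sends $B_1^nu^n\to\widehat{u}$ and $B_1^nf\to\widehat{f}$ in $L^2_{loc}(\mathbb{R}^N_\xi)$, while the remainder is controlled by $C\kappa_n^{2}|\xi|^{4}\|u^n\|_{L^2}$ via Cauchy–Schwarz and the local $L^2$-boundedness of $\varphi_{\omega,1}(\cdot;\eta)$ near $\eta=0$, so it vanishes on every $\xi$-compact. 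Taking the limit gives the distributional identity $m_{kl}\xi_k\xi_l\,\widehat{u}(\xi)=\widehat{f}(\xi)$; inverse Fourier transform and restriction to $\Omega$ (together with $u\in H^1_0(\Omega)$) produce the homogenized equation $-\partial_k(m_{kl}\partial_l u)=f$ in $\Omega$.

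Finally, to promote $-\mathrm{div}\,\sigma=f$ into $\sigma=M\nabla u$ I would invoke the Hashin–Shtrikman oscillating test functions $v^n_\lambda$ from \eqref{uB}, which by the construction recalled in Section~2 satisfy $v^n_\lambda\rightharpoonup\langle\lambda,x\rangle$ in $H^1(\Omega)$, $A_\omega^n\nabla v^n_\lambda\rightharpoonup M\lambda$ in $L^2(\Omega)^N$, and $-\mathrm{div}(A_\omega^n\nabla v^n_\lambda)=0$. For $\phi\in C_c^\infty(\Omega)$, Murat–Tartar's div–curl lemma (applicable since $\mathrm{div}\,\sigma^n=-f$ is $H^{-1}$-compact and $\nabla v^n_\lambda$ is curl-free, and symmetrically) yields $\int\sigma^n\cdot\nabla v^n_\lambda\,\phi\to\int\sigma\cdot\lambda\,\phi$ and, by symmetry of $A_\omega^n$, $\int\nabla u^n\cdot A_\omega^n\nabla v^n_\lambda\,\phi\to\int\nabla u\cdot M\lambda\,\phi$; equating and using symmetry of $M$ gives $\sigma\cdot\lambda=M\nabla u\cdot\lambda$ for all $\lambda\in\mathbb{R}^N$, hence $\sigma=M\nabla u$. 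Uniqueness of the constant-coefficient homogenized problem then forces the entire sequence, not just a subsequence, to converge. The step I expect to be the most delicate is the rigorous passage to the limit in the Bloch identity of Step~2: the remainder carries an unbounded polynomial factor $|\xi|^4$, and the Bloch image lives on the rescaled cell $\kappa_n^{-1}\omega^{\prime}$, so the cleanest way to conclude is to pair $B_1^n(\mathcal{A}_\omega^n u^n)-B_1^nf$ against test functions $\psi(\xi)\in C_c^\infty(\mathbb{R}^N)$ supported in a fixed compact (eventually contained in $\kappa_n^{-1}\omega^{\prime}$) and use Parseval, rather than trying to make sense of the identity pointwise in $\xi$.
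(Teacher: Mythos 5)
Your Step~2 has a genuine gap. The identity $B_1^n(\mathcal{A}_\omega^n u^n)=B_1^n f$ on all of $\mathbb{R}^N$ is not available: extending $u^n$ by zero is legitimate at the level of $H^1$ (since $u^n\in H^1_0(\Omega)$), but the extended function does not solve the equation across $\partial\Omega$. In distributions one has $\mathcal{A}_\omega^n \widetilde{u}^n=\widetilde{f}+(A^n_\omega\nabla u^n\cdot\nu)\,d\sigma_{\partial\Omega}$, and this conormal-flux surface term is neither zero nor negligible (it is not even bounded in $L^2$, and its Bloch transform does not vanish in the limit). This is precisely why the paper does not transform the equation globally: it first localizes with $v\in D(\Omega)$, obtaining $\mathcal{A}_\omega^n(vu^n)=vf+g^n_\omega+h^n_\omega$ in $\mathbb{R}^N$ as in \eqref{local}, and only then applies $B_1^n$ through \eqref{rln}. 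The price of localization is the two commutator terms, and handling them is the real content of the paper's proof: $g^n_\omega$ is $L^2$-bounded and passes to the limit via Theorem \ref{t7} and Lemma \ref{A2}, while $h^n_\omega$ contains $\partial a^n_{kl}/\partial x_k$, is only $H^{-1}$-bounded, and requires the duality definition \eqref{hm1}, a Taylor expansion of $\varphi_{\omega,1}$ in $\eta$, and the identification $D_k\varphi_{\omega,1}(\cdot;0)=i|\omega|^{-1/2}(w_{e_k}-y_k)$; it is exactly this term that injects the corrector information $w_{e_k}$ and produces $m_{kl}$ rather than the arithmetic average of $a^\omega_{kl}$. Your shortcut bypasses all of this, so the Bloch half of your argument does not stand as written; note also that the paper identifies the flux inside the same Bloch framework, via the localized Fourier-space equation and the oscillating choice $v=v_0e^{inx\cdot\zeta}$ in its final step, rather than by any external compactness lemma.

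Your final step, on the other hand, is correct but is a different proof altogether: pairing $\sigma^n_\omega$ with the Hashin--Shtrikman test functions $v^n_\lambda$ of \eqref{uB} through the div--curl lemma is exactly the classical Murat--Tartar $H$-convergence argument recalled in Section~2, and together with your a priori bounds and $-\mathrm{div}\,\sigma=f$ it already proves the whole theorem, making your Bloch step redundant. So as a proof of the statement your proposal can be salvaged by deleting Step~2 and keeping the div--curl identification; but then it is the physical-space proof the paper explicitly sets out to avoid, and it yields none of the spectral representation ($m_{kl}=\tfrac12 D^2_{kl}\lambda_1(0)$) that the Bloch route is designed to exhibit. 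To follow the paper's method you must reinstate the localization and carry out the limits of $B_1^ng^n_\omega$ and $B_1^nh^n_\omega$.
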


\begin{proof}
We start with the cut-off function technique to localize the equation.
\paragraph{Step 1. Localization:}
\noindent
Let $v \in D(\Omega)$ be arbitrary. Then the localization $vu^{n}$ satisfies
\begin{equation}\label{local} \mathcal{A}_\omega^{n}(vu^{n}) = vf + g^{n}_\omega + h^{n}_\omega\quad\mbox{in }\mathbb{R}^N, \end{equation}
where\begin{equation*}
 g^{n}_\omega =\ -2a^{n}_{kl}\frac{\partial u^{n}}{\partial x_l}\frac{\partial v}{\partial x_k} - a^{n}_{kl}\frac{\partial^2v}{\partial x_k\partial x_l}u^{n},\quad\quad
h^{n}_\omega  =\ -\frac{\partial a^{n}_{kl}}{\partial x_k}\frac{\partial v}{\partial x_l}u^{n}.
\end{equation*}
$g^{n}_\omega$ and $h^{n}_\omega$
correspond to terms containing zero and first order derivatives on $a^{n}_{kl}$, respectively.
\paragraph{Step 2. Limit of LHS of \eqref{local}:}
\noindent
We consider the first Bloch transform $B_1^{n}$ given in \eqref{btype} on L.H.S. of \eqref{local}
to get $\lambda_1^{n,p}(\xi)B_1^{n}(vu^{n})$. 
Since $v$ has compact support,  $vu^{n} \rightarrow vu$ in $L^2(\mathbb{R}^N)$ strong, then by using Theorem \ref{t7}, we get 
\begin{equation*}
\chi_{\kappa_n^{-1}\omega^{\prime}}(\xi)\lambda_1^{n,p}(\xi)B_1^{n}(vu^{n}) \rightarrow \frac{1}{2}D^2_{kl}\lambda_1(0)\xi_k\xi_l\widehat{vu}(\xi)\quad\mbox{in }L^2_{loc}(\mathbb{R}^N)\mbox{ strong.}
\end{equation*}
\paragraph{Step 3. Limit of $B_1^{n}g^{n}_\omega$:}
\noindent
Since $\sigma^{n}_\omega$ is bounded in $L^2(\Omega)^N$, 
there exists a convergent subsequence with limit $\sigma_\omega \in L^2(\Omega)^N$ and
we extend it by zero outside $\Omega$.
Thus,
we have\begin{equation*}
g^{n}_\omega \rightharpoonup g_\omega =\ -2\sigma^{\omega}_k\frac{\partial v}{\partial x_k} - M_{\omega}({a^{\omega}_{kl}})\frac{\partial^2v}{\partial x_k\partial x_l}u \quad\mbox{in }L^2_{loc}(\mathbb{R}^N)\mbox{ weak},
\end{equation*}
where $M_{\omega}({a^{\omega}_{kl}})$ is the $L^{\infty}$-weak* limit of $\{a^{n}_{kl}\}$
satisfying $M_{\omega}({a^{\omega}_{kl}}) =\frac{1}{|\omega|}\int_{\omega} a^{\omega}_{kl}(y) dy$ 
which follows from  Lemma \ref{A2}.\\
\\
Thus, by applying Theorem \ref{t7}, we have
\begin{equation*}
\chi_{\kappa_n^{-1}\omega^{\prime}}(\xi)B_1^{n}g^{n}_\omega(\xi) \rightharpoonup \widehat{g}_\omega(\xi)
\quad\mbox{in }L^2(\mathbb{R}^N)\mbox{ weak}. 
\end{equation*}
Due to integration by parts, we get
\begin{equation*}
\widehat{g}_\omega(\xi) = \frac{1}{|\omega|^{1/2}}\int_{\mathbb{R}^N} \Big[-2\sigma^{\omega}_k\frac{\partial v}{\partial x_k} + M_{\omega}(a^{\omega}_{kl})\frac{\partial v}{\partial x_l}\frac{\partial u}{\partial x_k} - (i\xi_k)M_{\omega}(a^{\omega}_{kl})\frac{\partial v}{\partial x_l}u\Big] e^{-ix\cdot\xi} dx.
\end{equation*}
\paragraph{Step 4. Limit of $B_1^{n}h^{n}_\omega(\xi)$:}
\noindent
Here we see that  $h^{n}_\omega$ is uniformly supported in a fixed compact set (say $R$) and
bounded in $H^{-1}(\mathbb{R}^N)$ but not in $L^2(\mathbb{R}^N)$. 
Then, in order to calculate $B_1^{n}h^{n}_\omega(\xi)$, we use the idea of 
decomposition what we have mentioned in \eqref{hm1}.
Let us consider a Vitali covering for $R$ to write as 
$R \approx \underset{p\in K}{\cup} (\epsilon_{p,n}\omega + y^{p,n})$ 
with $\kappa_n = \underset{p\in K}{sup}\hspace{2pt} \epsilon_{p,n}\rightarrow 0$ 
for a finite or countable $K$ and, for each $n$, the sets $\epsilon_{p,n}\omega + y^{p,n},\ p\in K$ are disjoint.
We have\begin{equation}\begin{aligned}\label{abv0}
B_1^{n}h^{n}_\omega(\xi) = &\sum_p\int_{\epsilon_{p,n}\omega + y^{p,n}} h^{n}_\omega(x)e^{-ix\cdot\xi}\overline{\varphi_{\omega,1}}\Big(\frac{x-y^{p,n}}{\epsilon_{p,n}};0\Big)dx \\
&+ \sum_p\int_{\epsilon_{p,n}\omega + y^{p,n}} h^{n}_\omega(x)e^{-ix\cdot\xi}\lb\overline{\varphi_{\omega,1}}\Big(\frac{x-y^{p,n}}{\epsilon_{p,n}};\epsilon_{p,n}\xi\Big)- \overline{\varphi_{\omega,1}}\Big(\frac{x-y^{p,n}}{\epsilon_{p,n}}; 0\Big)\rb dx.  
\end{aligned}\end{equation}
We start with the second term, by following the Taylor expansion around zero  of $\varphi_{\omega,1}(y;\cdot)$, we get
\begin{equation*}
-\sum_p\int_{\epsilon_{p,n}\omega + y^{p,n}}\frac{\partial a^{n}_{kl}}{\partial x_k}\frac{\partial v}{\partial x_l}u^{n}  
e^{-ix\cdot\xi}\lb \epsilon_{p,n}\frac{\partial\overline{\varphi_{\omega,1}}}{\partial \eta_j}\Big(\frac{x-y^{p,n}}{\epsilon_{p,n}};0\Big)\xi_j + \mathcal{O}(\epsilon_{p,n}^2\xi^2)\rb dx, 
\end{equation*}
which, via integrating by parts, becomes 
\begin{equation}\label{abv1}
\xi_j \sum_p\int_{\epsilon_{p,n}\omega + y^{p,n}} a^{n}_{kl}\frac{\partial v}{\partial x_l}u^{n}e^{-ix\cdot\xi}
\frac{\partial^2\overline{\varphi_{\omega,1}}}{\partial \eta_j\partial y_k}\Big(\frac{x-y^{p,n}}{\epsilon_{p,n}};0\Big) dx  + \mathcal{O}(\epsilon_{p,n}\xi).
\end{equation}
(the boundary term vanishes as $\frac{\partial\varphi_{\omega,1}}{\partial\eta_j}(\cdot;\eta)|_{\eta=0}\in H^1_0(\omega)$).\\

We claim that the above integral term converges in $L^2_{loc}(\mathbb{R}^N)$ to
\begin{equation}\label{abv2}
L_\omega\cdot\xi_j \int_{\mathbb{R}^N} \frac{\partial v}{\partial x_l}u e^{-ix\cdot\xi} dx,
\end{equation}
where 
\begin{align*}
L_\omega = &\ L^{\infty}\mbox{-weak* limit }\lb a^{\omega}_{kl}\Big(\frac{x-y^{p,n}}{\epsilon_{p,n}}\Big)\frac{\partial^2\overline{\varphi_{\omega,1}}}{\partial \eta_j\partial y_k}\Big(\frac{x-y^{p,n}}{\epsilon_{p,n}};0\Big)\rb\\
  =&\ \frac{1}{|\omega|}\int_{\omega}a^{\omega}_{kl}(y)\frac{\partial^2\overline{\varphi_{\omega,1}}}{\partial \eta_j\partial y_k}(y;0)dy \quad\mbox{(it follows from Lemma }\ref{A2}).
\end{align*}
Let $I^{n}(x) = a^{n}_{kl}\frac{\partial v}{\partial x_l}u^{n}e^{-ix\cdot\xi}
\frac{\partial^2\overline{\varphi_{\omega,1}}}{\partial \eta_j\partial x_k}(\frac{x-y^{p,n}}{\epsilon_{p,n}};0)\in L^1(\mathbb{R}^N_x)$,
then the above integrand term \eqref{abv1} is $\widehat{I^{n}}(\xi)\in L^{\infty}_{loc}(\mathbb{R}^N_{\xi})$
and consequently $\widehat{I^{n}}(\xi)\rightarrow \widehat{I}(\xi)$ $\forall \xi\in\mathbb{R}^N$,
where $I(x)$ is given by \eqref{abv2}. Thus $\xi_j\widehat{I^{n}}(\xi)\rightarrow \xi_j\widehat{I}(\xi)$ 
in $L^2_{loc}(\mathbb{R}^N)$ strongly.\\
\\
Now, let us consider the first term of the right hand side of \eqref{abv0}. After doing integration by parts, 
one has
\begin{equation*}
\frac{1}{|\omega|^{\frac{1}{2}}}\int_{\mathbb{R}^N} a^{n}_{kl} 
\Big[\frac{\partial^2v}{\partial x_k\partial x_l}u^{n} + \frac{\partial v}{\partial x_l}\frac{\partial u^{n}}{\partial x_k} -i\xi_k\frac{\partial v}{\partial x_l}u^{n}\Big]e^{-ix\cdot\xi} dx . 
\end{equation*}
By the similar way as we just have done, the limit of the above term would be
\begin{equation*}
\frac{1}{|\omega|^{\frac{1}{2}}}\int_{\mathbb{R}^N}\Big[M_{\omega}(a^{\omega}_{kl}) 
\frac{\partial^2v}{\partial x_k\partial x_l}u + \sigma^{\omega}_l\frac{\partial v}{\partial x_l} - (i\xi_k) M_{\omega}(a^{\omega}_{kl})\frac{\partial v}{\partial x_l}u\Big]e^{-ix\cdot\xi} dx  
\end{equation*}
and again performing the integration by parts in the first term, we see the above term is equal to
\begin{equation}\label{1st}
\frac{1}{|\omega|^{\frac{1}{2}}}\int_{\mathbb{R}^N}\Big( \sigma^{\omega}_l\frac{\partial v}{\partial x_l} - M_{\omega}(a^{\omega}_{kl})\frac{\partial v}{\partial x_l}\frac{\partial u}{\partial x_k}\Big) e^{-ix\cdot\xi}dx. 
\end{equation}
Now combining \eqref{1st} and \eqref{abv2} and using the fact 
$\frac{\partial\overline{\varphi_{\omega,1}}}{\partial \eta_j}(y;0) = -i|\omega|^{-1/2}(w_{e_j}(y)-y_j)$,
we see that $\chi_{\kappa_n^{-1}\omega^{\prime}}B_1^{n}h^{n}_\omega(\xi)$ converges 
strongly in $L^2_{loc}(\mathbb{R}^N)$ to
\begin{equation*}\begin{aligned}
-|\omega|^{-1/2}M_{\omega}\Big( a^{\omega}_{kl}\frac{\partial (w_{e_j}(y)-y_j)}{\partial y_k}\Big)(i\xi_j)
&\int_{\mathbb{R}^N} \frac{\partial v}{\partial x_l}u e^{-ix\cdot\xi}dx \\
+|\omega|^{-1/2}\int_{\mathbb{R}^N}&\Big( \sigma^{\omega}_l \frac{\partial v}{\partial x_l} 
- M_{\omega}(a^{\omega}_{kl})\frac{\partial v}{\partial x_l}\frac{\partial u}{\partial x_k}\Big) e^{-ix\cdot\xi} dx.
\end{aligned}\end{equation*}
\paragraph{Step 5. Limit of \eqref{local}:}
\noindent
By taking the Bloch transformation \eqref{btype} of the equation \eqref{local} and then, by passing to the limit onto it,
we get
\begin{equation}\label{lch}
\widehat{\mathcal{M}(vu)}(\xi) = \widehat{vf}(\xi) - |\omega|^{-1/2}\int_{\mathbb{R}^N} \sigma^{\omega}_k\frac{\partial v}{\partial x_k}e^{-ix\cdot\xi} dx 
 -(i\xi_k)|\omega|^{-1/2}m_{kl}\int_{\mathbb{R}^N} u\frac{\partial v}{\partial x_l} e^{-ix\cdot\xi} dx,
\end{equation}
where $\mathcal{M} \equiv -\frac{\partial}{\partial x_k}(m_{kl}\frac{\partial}{\partial x_l})$ the homogenized operator.\\
The above equation is considered as the localized homogenized equation in the Fourier space. 
The conclusion of the Theorem will follow as a consequence of this equation.
\paragraph{Step 6. Fourier space ($\xi)$ to physical space $(x)$: }
We take the inverse Fourier transform of the localized homogenized equation \eqref{lch} to go back to physical space
\begin{equation*}
\mathcal{M}(vu) = vf - \sigma^{\omega}_k\frac{\partial v}{\partial x_k} - m_{kl}\frac{\partial}{\partial x_k}\Big(\frac{\partial v}{\partial x_l}u\Big) \quad\mbox{in }\mathbb{R}^N.
\end{equation*}
The left hand side part $\mathcal{M}(vu)$ can be calculated directly from the definition of the operator $\mathcal{M}$:
\begin{equation*}
\mathcal{M}(vu) = -m_{kl} \frac{\partial^2 v}{\partial x_k\partial x_l} u - 2 m_{kl}\frac{\partial v}{\partial x_k}\frac{\partial u}{\partial x_l} + v \mathcal{M}(u) \quad\mbox{in }\mathbb{R}^N. 
\end{equation*}
Thus, by equating with the right hand side part, it follows 
\begin{equation}\label{re}
v(\mathcal{M}(u) -f) = \lb m_{kl}\frac{\partial u}{\partial x_l} - \sigma_k \rb \frac{\partial v}{\partial x_k} \quad\mbox{in }\mathbb{R}^N \quad \forall v\in D(\Omega).
\end{equation}
Let us choose $v(x) = v_0(x)e^{inx\cdot\zeta}$, where $\zeta$
is a unit vector in $\mathbb{R}^N$ and $v_0 \in D(\Omega)$ is fixed.
Then by letting $n\rightarrow \infty$ in the resulting relation \eqref{re}
and varying the unit vector $\zeta$, we can easily deduce
successively that 
$$\sigma^{\omega}_k = m_{kl}\frac{\partial u}{\partial x_l}\quad\mbox{ in }\Omega\quad\mbox{ and  }\quad -\frac{\partial}{\partial x_k}\Big(m_{kl}\frac{\partial}{\partial x_l}u\Big) = f \quad\mbox{ in }\Omega.$$
This completes our proof of the Main Theorem of Homogenization.
\hfill\end{proof}
\section{Bloch spectral representation of a class of non-periodic simple laminates in two-phase medium}
\setcounter{equation}{0}
In this section, we present Bloch spectral representation of one subclass of simple laminates which are non periodic structures.
It is governed with both non-uniform scales and transformations in one direction and maintains uniformity with respect to scales and translation in 
other directions. As a particular case, it includes the periodic laminates also.
\paragraph{Laminated micro-structure:}
The laminated micro-structures are defined as where the geometry of the problem varies 
only in a single direction, that means the sequence of matrices $A^{n}$ depends on a single
space variable,  $A^{n}(x) = A^{n}(x\cdot e)$ (where $e$ is some standard basis vector in $\mathbb{R}^N$)
and the homogenized composite is called laminates. If the component phases are stacked in slices orthogonal 
to the $e$ direction, in that case it is a generalization of the one-dimensional settings. 
In particular, the $H$-convergence can be reduced to the usual weak convergence 
of some combinations of entries of the matrix $A^{n}$. In effect, this yields another type of 
explicit formula for the homogenized matrix as in the one-dimensional case. \\
Let us consider this following result, $A^{n} \in \mathcal{M}(\alpha,\beta,\Omega)$ satisfying the assumption
\begin{center}$A^{n}(x) = a^{n}(x\cdot e_i)I$ a.e. $x\in\Omega$.\end{center} 
Then, $A^{n}$ $H$-converges to an homogenized matrix $A^{*}$ if and only if the following convergences hold in  $L^{\infty}(\Omega)$-weak*  (see \cite{A}):
$$\frac{1}{A^{n}_{ii}} \rightharpoonup \frac{1}{A^{*}_{ii}}\ \mbox{ and } A^{n}_{jj}\rightharpoonup A^{*}_{jj} \ \mbox{ for }1\leq i \leq N \mbox{ and }j\neq i\ \mbox{ in  $L^{\infty}(\Omega)$-weak*. }$$
\begin{center}So, $A^{*} = diag(\overline{a}(x\cdot e_i),..,\underline{a}(x\cdot e_i),..,\overline{a}(x\cdot e_i)),\quad$ $\underline{a}(x\cdot e_i)$ comes at the $i$-th diagonal entry.\end{center}
Here $\underline{a}(x\cdot e)$ and $\overline{a}(x\cdot e)$ call the harmonic mean and 
arithmetic mean of $a^{n}(x\cdot e)$.
\paragraph{A subclass of non-periodic laminates in two-phase medium:}
Here, let us consider $S=[-1,1]^N$ and for some fixed $i\in \{1,\ldots,N\}$ define $A_S\in \mathcal{M}(\alpha,\beta;S)$ as
\begin{equation}\begin{aligned}\label{nl6}
A_S(y) = a_S(y\cdot e_i)I &=
\left\{\begin{array}{ll} \alpha I \ \mbox{ whenever } y\cdot e_i \in [-a_i,a_i]\subset [-1,1],\\[1ex] 
                      \beta I \ \mbox{ elsewhere.}
                     \end{array} \right.
\end{aligned}\end{equation}
The volume fraction $\theta$ is $\theta=\ a_i$.\\
\\
Now we seek the equivalence of $a_S(y\cdot e_i)$ to some $m$ in $e_i$ direction.  
Then, extending $a_S(y\cdot e_i)$ by $m$ for $y\cdot e_i \in \mathbb{R}\smallsetminus [-1,1]$ where $m$ is a positive constant, 
if there exists $w_{e_i}\in  H^{1}_{loc}(\mathbb{R})$ satisfies (for some fixed $i\in \{1,..,N\}$):
\begin{equation}\label{nl1}
- \frac{d}{dy_i}\Big(a_S(y_i)\frac{d}{dy_i}w_{e_i}(y_i)\Big) =\ 0 \quad\mbox{in }\mathbb{R},\quad  w_{e_i}(y_i) = y_i \quad\mbox{in }\mathbb{R} \smallsetminus [-1,1].
\end{equation}
Then $a_S(y\cdot e_i)$ is said to be \textit{equivalent} to $m$.\\
\\
From the Example \ref{si}, restricting it in one-dimension ($N=1$) case, we establish the existence of $w_{e_i}\in H^{1}_{loc}(\mathbb{R})$ satisfying \eqref{nl1}
and $a_S(y\cdot e_i)$ is equivalent to 
$$m = \frac{\alpha\beta}{\theta\beta +(1-\theta)\alpha}=\underline{a}\mbox{ (say), }\mbox{ (the harmonic mean of $\alpha$ and $\beta$ with the proportion $\theta$)}. $$ 
In the other directions $e_j$ ( $j=1,..,(i-1),(i+1),..,N$ ) we will be using the periodic arrays to define the micro-structures as follows:
one uses a sequence of Vitali coverings $(\epsilon_{p,n},y^{p,n})$ of $\Omega$ by reduced copies of $S$ in $e_i$
direction and uses $\epsilon_{p,n}S$-periodicity in the directions orthogonal to $e_i$ on the cell centered at  $y^{p,n}$ to get
\begin{equation*}  meas\ (\Omega \smallsetminus\ \underset{z\in \mathbb{Z}^{N-1}}{\cup}\underset{p\in K}{\cup}(\epsilon_{p,n}(S+z) + y^{p,n})) = 0, \mbox{ with } \kappa_n = \underset{p\in K}{sup}\hspace{2pt} \epsilon_{p,n}\rightarrow 0,\end{equation*}
for a finite or countable $K$. These define the micro-structures in $A^{n}$ as
\begin{equation}\label{nl2} A^{n}_S(x) = a_S\Big(\frac{x - y^{p,n}}{\epsilon_{p,n}}\cdot e_i\Big)I\ \mbox{ in } \epsilon_{p,n}(S+z) + y^{p,n}\ \mbox{ a.e. in }\Omega, \quad p\in K,\ z\in\mathbb{Z}^{N-1},\end{equation}
which makes sense since for each $n$ the sets $\epsilon_{p,n}(S+z) + y^{p,n},\ p\in K,z\in\mathbb{Z}^{N-1}$ are disjoint.
The above construction \eqref{nl2} represents one subclass of \textit{non-periodic laminate} micro-structures in two-phase medium.\\
\\
Consequently, one has the following $H$-convergence of the entire sequence
\begin{equation}\label{nl5} A^{n}_S \xrightarrow{H \mbox{ converges }} A^{*}_S=\ diag(\overline{a_S},..,\underline{a_S},..,\overline{a_S}),\,\,\mbox{ $\underline{a_S}$ comes at only $i$-th diagonal entry. } \end{equation}
Due to the periodicity in $e_j$ ($j\neq i$) directions, the homogenized conductivity $(A^{*}_S)_{jj}$ ($j\neq i$) is defined by its entries:
\begin{equation}\label{nl3}
(A^{*}_S)_{jj} = \frac{1}{|S|}\int_S a_S(y\cdot e_i)(\nabla\chi_j + e_j)\cdot (\nabla\chi_j +e_j) dy, \quad j=1,..,(i-1),(i+1),..,N,
\end{equation}
where for each $j\neq i$,  $\chi_j\in H^1(S)$ solves the following cell problem 
\begin{equation}\label{nl4}
-div\ (a_S(y\cdot e_i)(\nabla\chi_j(y) + e_j)) = 0 \ \mbox{ in }S,\ \ y \mapsto\chi_j(y)\ \mbox{ is $1$-periodic in each $e_j\ (j\neq i$) direction.}
\end{equation}
As we see, $\chi_j(y) = 0$ ($j\neq i$) uniquely solves the above equation to give
$$ (A^{*}_S)_{jj} = \theta\alpha + (1-\theta)\beta= \overline{a}\mbox{ (say)},\mbox{ ( the arithmetic mean of $\alpha$ and $\beta$ with the proportion $\theta$ )}. $$ 
So, the limit in \eqref{nl5} is well understood now. 
\begin{remark}
As we see, the micro-structures governed by \eqref{nl2} are periodic in $(N-1)$ directions and in one direction it includes 
one-dimensional Hashin-Shtrikman construction.
\end{remark}
\paragraph{Bloch spectral analysis:}
We take $A_S(y)= a_S(y\cdot e_i)I\in \mathcal{M}(\alpha,\beta;S)$ defined in \eqref{nl6} to consider
the following spectral problem parameterized by $\eta \in \mathbb{R}^N$: 
Find $\mu := \mu(\eta) \in \mathbb{C}$ and $\varphi_S := \varphi_S(y; \eta)$ (not identically zero) such that
\begin{equation}\begin{aligned}\label{nl7}
-\Big(\frac{\partial}{\partial y_k} + i\eta_k\Big)\Big[a_S(y\cdot e_i)\Big(\frac{\partial}{\partial y_k} + i\eta_k\Big)\Big]\varphi_S(y;\eta) &= \mu(\eta)\varphi_S(y;\eta) \mbox{ in }S, \\
\varphi_S(y;\eta)\mbox{ is constant on }y\cdot e_i &= \pm 1,  \\
\varphi_S(y;\eta)\mbox{ is $1$-periodic in each $e_j$ }&( j\neq i )\mbox{ direction,}\\
\int_{y\cdot e_i =\pm 1} a_S(y\cdot e_i)\Big(\frac{\partial}{\partial y_k} + i\eta_k\Big)\varphi_S(y;\eta)\nu_k\ d\sigma &= 0, 
\end{aligned}\end{equation}
(where $\nu= \pm e_i$ is the outer normal unit vector on the boundary $\{y\cdot e_i =\pm 1\}$  and $d\sigma$ is the line measure on $\{y\cdot e_i=\pm 1\}$).
\begin{remark}
 $1$-periodicity has been taken for convenience. We can deal with $l$-periodicity ($l>0$) as well. 
\end{remark}
\paragraph{Weak formulation:}
We first introduce the function spaces
\begin{align*}
L^2_{c,\#}(S) =  \ \{\varphi\in L^2_{loc}(\mathbb{R}^N) \  | \ &\varphi \mbox{ is constant when }y\cdot e_i \in \mathbb{R} \smallsetminus (-1,1)\\
                                                        \mbox{ and }&\varphi \mbox{ is $1$-periodic in }e_j (j\neq i) \mbox{ direction} \},\\
H^1_{c,\#}(S)=\ \{\varphi\in H^1_{loc}(\mathbb{R}^N) \  | \ &\varphi \mbox{ is constant when  }y\cdot e_i \in \mathbb{R} \smallsetminus (-1,1)\\
                                                        \mbox{ and }&\varphi \mbox{ is $1$-periodic in }e_j (j\neq i) \mbox{ direction} \}.
\end{align*}
Here, “$c$” is a floating constant depending on the element under consideration. $L^2_{c,\#}(S)$ and $H^1_{c,\#}(Y)$ are proper 
subspace of $L^2(S)$ and  $H^1(S)$ respectively, and they inherit the subspace norm-topology of the parent space. \\
\\
The motivation for the state space $H^1_{c,\#}$ starting from $a_S(y\cdot e_i)$ is equivalent to $m$ in $e_i$ direction,
but independent of other $y_j$ variables ($j\neq i$). So, in particular, in other directions it is periodic with any period, 
for convenience we take $1$-periodicity. \\
\\
Similarly, one can define $L^2_{c,\#}(\eta;S)$ or $H^1_{c,\#}(\eta;S)$ spaces as follows:
\begin{align*}
L^2_{c,\#}(\eta;S) = \ \{\varphi \in L^2_{loc}(\mathbb{R}^N) \  | \ &e^{-iy\cdot \eta}\varphi \mbox{ is constant when }y\cdot e_i \in \mathbb{R} \smallsetminus (-1,1)\\
                                                        \mbox{ and }&e^{-iy\cdot \eta}\varphi \mbox{ is $1$-periodic in }e_j (j\neq i) \mbox{ direction}, \}\\
H^1_{c,\#}(\eta;S) =\ \{\varphi\in H^1_{loc}(\mathbb{R}^N) \  | \ &e^{-iy\cdot \eta}\varphi \mbox{ is constant when }y\cdot e_i \in \mathbb{R} \smallsetminus (-1,1)\\
                                                        \mbox{ and }&e^{-iy\cdot \eta}\varphi \mbox{ is $1$-periodic in }e_j (j\neq i) \mbox{ direction} \}.
\end{align*}
As a next step we give the weak formulation of the problem in these function spaces. 
We are interested into proving the existence of the eigenvalue and the corresponding eigenvector 
$(\mu(\eta), \varphi_S(y;\eta))$ with $\mu(\eta)\in\mathbb{C}$ and $\varphi_S(\cdot;\eta) \in H^1_{c,\#}(S)$ of 
the following weak formulation of  \eqref{nl7}:
\begin{equation}\label{nl8}
a_S(\eta)(\varphi_S(y;\eta), \psi) = \mu(\eta)(\varphi_S(y;\eta),\psi) \quad \forall \psi\in H^1_{c,\#}(S).
\end{equation}
\paragraph{Existence Result:}
By following the same analysis presented in Section \ref{nl9}, we state the corresponding existence result for this problem \eqref{nl8}.
\begin{proposition}
Fix $\eta \in \mathbb{R}^N$. Then, there exist a sequence of eigenvalues $\{ \mu_m(\eta); m \in \mathbb{N} \}$
and corresponding eigenvectors $\{ \varphi_{S,m}(y;\eta)\in H^1_{c,\#}(S), m \in \mathbb{N} \}$ such that
$$ a_S(\eta)(\varphi_{S,m}(y;\eta), \psi) = \mu_m(\eta)(\varphi_{S,m}(y;\eta),\psi) \quad \forall \psi\in H^1_{c,\#}(S)\ \mbox{ and }\ \forall m \in \mathbb{N}.$$
\end{proposition}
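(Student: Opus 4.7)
The plan is to mirror the proof of Theorem \ref{deH} step by step, with minor modifications to accommodate the mixed boundary conditions (constant in the $e_i$-direction, periodic in the orthogonal directions) encoded in the space $H^1_{c,\#}(S)$. First I would introduce the associated sesquilinear form
$$a_S(\eta)(u,w) = \int_S a_S(y\cdot e_i)\Bigl(\frac{\partial u}{\partial y_l} + i\eta_l u\Bigr)\overline{\Bigl(\frac{\partial w}{\partial y_l} + i\eta_l w\Bigr)}\,dy,$$
defined on $H^1_{c,\#}(S)\times H^1_{c,\#}(S)$. Continuity is immediate from boundedness of $a_S$. For coercivity, I would reproduce the estimate \eqref{ellip}: expanding the form and applying Cauchy--Schwarz exactly as in Section \ref{nl9}, one gets
$$a_S(\eta)(v,v) + C\|v\|_{L^2(S)}^2 \geq \frac{\alpha}{2}\|v\|_{H^1(S)}^2$$
for $C$ large enough depending on $|\eta|$, $\alpha$, $\beta$. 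This bound makes sense on $H^1_{c,\#}(S)$ because it inherits the $H^1(S)$ norm as a subspace.

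Next I would solve the associated source problem: given $F\in L^2_{c,\#}(S)$, apply the Lax--Milgram lemma to the shifted form $a_S(\eta) + C(\cdot,\cdot)$ on the Hilbert space $H^1_{c,\#}(S)$ to obtain a unique $u\in H^1_{c,\#}(S)$ solving the weak formulation. Here the critical point to verify is that $H^1_{c,\#}(S)$ is indeed a closed subspace of $H^1(S)$ — the constant-trace condition on the two faces $\{y\cdot e_i = \pm 1\}$ and the periodicity condition on the four lateral faces both pass to $H^1$-limits via the trace operator, so this is fine. This yields a bounded linear Green operator $G_S(\eta):L^2_{c,\#}(S)\to H^1_{c,\#}(S)$.

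Then I would compose with the natural inclusion $H^1_{c,\#}(S)\hookrightarrow L^2_{c,\#}(S)$ to view $G_S(\eta)$ as an operator on $L^2_{c,\#}(S)$. The main obstacle, and the only step that needs genuine thought beyond copying Section \ref{nl9}, is verifying that this inclusion is compact: since $S$ is bounded and Lipschitz, the full inclusion $H^1(S)\hookrightarrow L^2(S)$ is compact by Rellich--Kondrachov, and $H^1_{c,\#}(S)\hookrightarrow L^2_{c,\#}(S)$ inherits this compactness because it is obtained by restriction to a closed subspace with compatible topologies. Self-adjointness of $G_S(\eta)$ on $L^2_{c,\#}(S)$ follows from the symmetry $a_S(\eta)(u,w)=\overline{a_S(\eta)(w,u)}$, which in turn comes from the scalar (hence symmetric) nature of $a_S(y\cdot e_i)I$.

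Finally, applying the classical spectral theorem for compact self-adjoint operators on the Hilbert space $L^2_{c,\#}(S)$ produces a discrete sequence of real eigenvalues of $G_S(\eta)$ accumulating only at $0$, each of finite multiplicity, with eigenvectors forming an orthonormal basis; passing back through the resolvent identification $\mu_m(\eta) = \lambda_m^{-1} - C$ (where $\lambda_m$ is an eigenvalue of $G_S(\eta)$) yields the desired sequence $\{\mu_m(\eta)\}$ with eigenvectors $\varphi_{S,m}(\cdot;\eta)\in H^1_{c,\#}(S)$ satisfying \eqref{nl8}. The ordering $0\leq \mu_1(\eta)\leq \mu_2(\eta)\leq \cdots\to\infty$ follows from the min-max characterization, just as in Theorem \ref{deH}.
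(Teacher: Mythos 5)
Your proposal is correct and follows essentially the same route the paper takes: the paper proves this proposition simply by invoking ``the same analysis presented in Section \ref{nl9}'', i.e.\ the Lax--Milgram/Green operator/compact self-adjoint spectral theorem argument of Theorem \ref{deH}, which is exactly what you reproduce (with the right extra checks that $H^1_{c,\#}(S)$ is a closed subspace of $H^1(S)$ and that the embedding into $L^2_{c,\#}(S)$ is compact).
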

\paragraph{Regularity of the Ground state:}
In the next proposition, we announce the regularity result of the Ground state based on the previous Kato-Rellich analysis.
\begin{proposition}
\begin{enumerate}
 \item Zero is the first eigenvalue of \eqref{nl8} at $\eta=0$ and is an isolated point of the spectrum
with its algebraic multiplicity one.
\item There exists an open neighborhood $\omega^{\prime}$ around zero such that the 
first eigenvalue $\mu_1(\eta)$ is an analytic function on $\omega^{\prime}$ and
there is a choice of the first eigenvector $\varphi_{S,1}(y;\eta)$ satisfying
$$\eta \mapsto \varphi_{S,1}(\cdot;\eta) \in H^1_{c,\#}(S)\mbox{ is analytic on }\omega^{\prime}\,\mbox{ and }\ \varphi_{S,1}(y;0)= |S|^{-1/2},$$
with the boundary normalization condition 
$D^{l}_{\eta}\varphi_{S,1}(y;0)=\ 0$ on the boundary $y\cdot e_i = \pm 1$, for $l\in \mathbb{Z}^N_{+}\smallsetminus \{0\}.$ 
\end{enumerate}
\end{proposition}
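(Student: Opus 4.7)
The plan is to transplant the three-step programme of Section \ref{nl9} and Section 3.3 to the new state space $H^1_{c,\#}(S)$. The abstract ingredient that makes the transplant work is the compactness of the embedding $H^1_{c,\#}(S)\hookrightarrow L^2_{c,\#}(S)$, which follows from Rellich--Kondrachov applied to the bounded Lipschitz slab after accounting for the transverse periodic identifications, together with self-adjointness inherited from the symmetry $a_S(y\cdot e_i)=a_S(y\cdot e_i)^T$.

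For part (1), I would first test the weak formulation \eqref{nl8} at $\eta=0$ with $\psi=\varphi$: any solution with $\mu=0$ satisfies $\int_S a_S(y\cdot e_i)|\nabla\varphi|^2\,dy=0$, so $\nabla\varphi\equiv 0$ on $S$ and $\varphi$ is a constant, which lies in $H^1_{c,\#}(S)$ since it is trivially $1$-periodic in the transverse directions and trivially constant on the faces $y\cdot e_i=\pm 1$. Thus the kernel is exactly one-dimensional. Non-negativity of the form forces all eigenvalues to be $\geq 0$, so $0$ is the first, and its isolation follows from the discrete spectrum of the compact, self-adjoint Green's operator $\bigl(\mathcal{A}_S(0)+C\,\mathrm{Id}\bigr)^{-1}$ on $L^2_{c,\#}(S)$.

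For part (2), I would reproduce Steps 1--3 of the Hashin--Shtrikman proof verbatim for the sesquilinear form
\[
\mathbf{t}_S(\eta^{\prime})[u,w]=\int_S a_S(y\cdot e_i)(\nabla+i\eta^{\prime})u\cdot\overline{(\nabla+i\eta^{\prime})w}\,dy
\]
defined on $H^1_{c,\#}(S)$ for $\eta^{\prime}\in\mathbb{C}^N$: it is a quadratic polynomial in $\eta^{\prime}$ (type (a)), sectorial by the same Cauchy--Schwarz absorption that produced \eqref{ellip}, and closed because $H^1_{c,\#}(S)$ inherits completeness from $H^1$. Combined with (1), the Kato--Rellich theorem supplies analytic branches $\mu_1(\eta)$ and $\varphi_{S,1}(\cdot;\eta)\in H^1_{c,\#}(S)$ on a neighbourhood $\omega^{\prime}$ of $0$, with $\varphi_{S,1}(\cdot;0)$ a nonzero constant which we fix to $|S|^{-1/2}$. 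The boundary normalization is then an exact copy of the argument in Section \ref{nl15}: by analyticity the boundary constant $c_\eta$ is nonzero on a (possibly smaller) neighbourhood $\omega^{\prime}$, and replacing $\varphi_{S,1}$ by $(|S|^{-1/2}/c_\eta)\,\varphi_{S,1}$ forces $\varphi_{S,1}(y;\eta)\equiv|S|^{-1/2}$ on $\{y\cdot e_i=\pm 1\}$ uniformly in $\eta$. Differentiating this identity in $\eta$ at $\eta=0$ yields $D^l_\eta\varphi_{S,1}(y;0)=0$ on the boundary faces for every multi-index $l\in\mathbb{Z}^N_+\setminus\{0\}$.

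The only point I expect to require genuine care, rather than quotation from the earlier sections, is checking that the mixed constraint defining $H^1_{c,\#}(S)$ (Dirichlet-type constancy on the two faces $y\cdot e_i=\pm 1$ combined with full $1$-periodicity in the transverse directions) cuts out a closed subspace of $H^1$ over the fundamental domain and admits no extra zero modes: for instance, one must rule out a piecewise constant that takes distinct values on the two faces, which the connectedness of $S$ together with $\nabla\varphi=0$ rules out automatically. Once this compatibility is in place the compact resolvent, sectoriality, and holomorphy of type (B) are all immediate, and the remainder of the proof is mere bookkeeping already carried out in the Hashin--Shtrikman setting.
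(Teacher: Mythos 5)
Your proposal is correct and follows essentially the same route as the paper, which does not write out a separate proof but announces this proposition as the transplantation of the earlier Hashin--Shtrikman analysis (compact resolvent and kernel-of-constants argument for the simplicity of the zero eigenvalue, the sectorial/closed/type~(a) form argument feeding Kato--Rellich, and the boundary normalization by dividing out the analytic boundary constant $c_\eta$) to the state space $H^1_{c,\#}(S)$. Your added check that the mixed constancy-plus-periodicity constraints cut out a closed subspace of $H^1(S)$ with no extra zero modes is exactly the only new point the change of state space requires, and you resolve it correctly via connectedness of $S$.
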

\paragraph{Derivatives of $\mu_1(\eta)$ and $\varphi_{S,1}(\eta)$ at $\eta=0$:}
The procedure consists of differentiating the eigenvalue equation \eqref{nl7} 
for $\mu(\eta)=\mu_1(\eta)$ and $\varphi_S(\cdot;\eta)=\varphi_{S,1}(\cdot;\eta)$.
\paragraph{Step 1.\hspace{2pt} Zeroth order derivatives:}
We simply recall that $\varphi_{S,1}(y;0) = |S|^{-1/2}$ by our choice and $\mu_1(0) = 0.$
\paragraph{Step 2.\hspace{2pt}First order derivatives of $\mu_1(\eta)$ at $\eta =0$:}
By differentiating the equation \eqref{nl7} once with respect to $\eta_k$ and then 
taking scalar product with $\varphi_{S,1}(\cdot;\eta)$ in $L^2(S)$ at $\eta=0$ we get
\begin{equation*}\langle D_k (\mathcal{A}_S(0) - \mu_1(0))\varphi_{S,1}(\cdot;0),\varphi_{S,1}(\cdot;0)\rangle = 0.\end{equation*}
Then using $D_k \mathcal{A}_S(0)\varphi_{S,1}(\cdot;0) = iC^S_k\varphi_1(\cdot;0)$ whose integral over $S$ vanishes 
through integration by parts together with using the boundary conditions in \eqref{nl7},
it follows therefore that 
\begin{equation}D_k \mu_1(0) = 0 \quad \forall\hspace{2pt} k = 1,..,N. \end{equation}
\paragraph{Step 3. \hspace{2pt} First order derivatives of $\varphi_{S,1}(.;\eta)$ at $\eta=0$:}
By differentiating \eqref{nl7} once with respect to $\eta_k$ at zero, one has
\begin{align}
-\frac{\partial}{\partial y_l}\Big(a_S(y\cdot e_i)\frac{\partial}{\partial y_l}D_k\varphi_{S,1}(\cdot;0)\Big) &= -iC^S_k\varphi_1(\cdot;0) = i\varphi_{S,1}(\cdot;0)\frac{\partial}{\partial y_k}(a_S(y\cdot e_i))\quad\mbox{in }S, \label{nl11}\\
D_k\varphi_{S,1}(\cdot;0) &=\ 0  \quad\mbox{ on } y\cdot e_i = \pm 1,\label{nl12}\\
D_k\varphi_{S,1}(\cdot;0)\ &\mbox{ is }\ 1\mbox{-periodic in }e_j\ ( j\neq i) \mbox{ direction, }\label{nl13}\\ 
\mbox{and}\quad \int_{y\cdot e_i = \pm 1} a_S(y\cdot e_i)&\lb \nabla_x  D_k\varphi_{S,1}(\cdot;0) + i\varphi_{S,1}(\cdot;0)e_k\rb\cdot\nu \ d\sigma =\ 0 .\label{nl14}
\end{align}
For $k=i$ we seek $D_i\varphi_{S,1}(y;0)=\frac{1}{i}\frac{d}{dy_i}\varphi_{S,1}(y_i;0)$ solving 
the equation \eqref{nl11} uniquely with the Dirichlet boundary condition \eqref{nl12}. 
In that case, \eqref{nl11} becomes an ordinary differential equation and gets identified with \eqref{nl1} to give
\begin{equation*}
\frac{1}{i}\frac{d}{dy_i}\varphi_{S,1}(y_i;0)=\ i|S|^{-1/2}(w_{e_i}(y_i)-y_i)
\end{equation*}
satisfying \eqref{nl14} also.\\
And for $k\neq i $ we notice that $D_k \varphi_{S,1}(y;0)= 0$ is the unique solution of the above system of equations.
\paragraph{Step 4.\hspace{2pt} Second derivatives of $\mu_1(\eta)$ at $\eta=0$:}
We differentiate \eqref{nl7} with respect to $\eta_k$ twice and then, by taking 
scalar product with $\varphi_{S,1}(.;\eta)$ in $L^2(S)$ at $\eta=0$, one obtains 
$$\langle D^2_{kk}(\mathcal{A}_S(0) - \mu_1(0))\varphi_{S,1}(\cdot;0), \varphi_{S,1}(\cdot;0)\rangle 
+ 2\langle [D_k(\mathcal{A}_S(0) - \mu_1(0))]D_k\varphi_{S,1}(\cdot;0), \varphi_{S,1}(\cdot;0)\rangle =\ 0.$$
Which simply becomes 
\begin{equation*}\begin{aligned}
\frac{1}{2}D^2_{ii}\mu_1(0) &= \frac{1}{|S|}\int_{S} a_S(y\cdot e_i)dx - \frac{1}{|S|}\int_{S}C^S_i(w_{e_i}(y_i) - y_i)dy =\ m =\ \underline{a_S},\\
\mbox{and }\quad\frac{1}{2}D^2_{jj}\mu_1(0) &= \frac{1}{|S|}\int_{S} a_S(y\cdot e_i)dx =\ \overline{a_S}\quad\forall j \neq i,
\end{aligned}\end{equation*}
which are indeed the homogenized coefficients governed with the simple laminates in two-phase
medium stated in \eqref{nl5}.
\hfill\qed
\\

As a next step one can define the first Bloch transformation likewise 
in Section \ref{nl10} and successively the limit analysis can be done as it is done in 
Section \ref{nl16}. 

\section{Appendix}
\begin{lemma}\label{A1}
Let us take the ball $B=B(0,1)$ and an open set in $\Omega\in\mathbb{R}^N$, and 
consider a Vitali covering of $\Omega$ with a countable infinite union 
of disjoint balls with center $y^p$ and radius $\epsilon_p$, where
$p\in \mathbb{N}$, i.e. $\Omega \approx \underset{p\in \mathbb{N}}{\cup} (\epsilon_pB + y^{p})$.
Let us consider $f \in H^1(B)$ and define $F = F(x)$ by $F(x) = \epsilon_p f(\frac{x-y^p}{\epsilon_p})$ in $\epsilon_p B + y^p $ 
a.e. on $\Omega$. Then, $F\in H^1(\Omega)$ if and only if the trace of $f$ vanishes over the boundary
$\partial B$, or $f\in H^1_0(B)$.
\end{lemma}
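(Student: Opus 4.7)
The plan is to prove the equivalence by treating the two implications separately: the \emph{if} direction reduces to a scaling estimate combined with the Vitali measure identity, while the \emph{only if} direction rests on the jump formula for the distributional gradient of a piecewise $H^1$ function and a measure-theoretic argument exploiting the essential disjointness of the boundary spheres.

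For sufficiency, suppose $f \in H^1_0(B)$ and extend it by zero to $\tilde f \in H^1(\mathbb{R}^N)$. On each ball $\epsilon_p B + y^p$, define $F_p(x) = \epsilon_p \tilde f\big(\frac{x-y^p}{\epsilon_p}\big)$, and $F_p \equiv 0$ elsewhere. The zero extension of $\tilde f$ lies in $H^1(\mathbb{R}^N)$ precisely because $f \in H^1_0(B)$, so each $F_p \in H^1(\mathbb{R}^N)$, and a direct change of variables yields $\|F_p\|_{L^2(\mathbb{R}^N)}^2 = \epsilon_p^{N+2}\|f\|_{L^2(B)}^2$ and $\|\nabla F_p\|_{L^2(\mathbb{R}^N)}^2 = \epsilon_p^N\|\nabla f\|_{L^2(B)}^2$. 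Since the $F_p$'s have pairwise disjoint supports and $\sum_p \epsilon_p^N = |\Omega|/|B| < \infty$, the series $F = \sum_p F_p$ converges absolutely in $H^1(\Omega)$ and agrees a.e.\ with the function of the statement.

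For necessity, assume $F \in H^1(\Omega)$. Integrating by parts on each ball, for every $\phi \in C_c^{\infty}(\Omega)$ and every index $i$,
\begin{equation*}
\int_\Omega F\,\partial_i\phi\,dx = -\int_\Omega (\partial_i F)_{\mathrm{pw}}\,\phi\,dx + \sum_p \int_{\partial(\epsilon_p B + y^p)} F\,\phi\, \nu^p_i\,d\sigma,
\end{equation*}
where $\nu^p$ is the outward unit normal and $(\partial_i F)_{\mathrm{pw}}$ denotes the pointwise gradient on each ball (which lies in $L^2(\Omega)$ by the same scaling as above). Since $F \in H^1(\Omega)$, the left-hand side equals $-\int_\Omega (\partial_i F)\,\phi\,dx$ and the weak derivative $\partial_i F$ coincides a.e.\ with $(\partial_i F)_{\mathrm{pw}}$, forcing the surface sum to vanish for all test $\phi$. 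Changing variables $y = (x-y^p)/\epsilon_p$ on each sphere, this becomes
\begin{equation*}
\sum_p \epsilon_p^N \int_{\partial B} f(y)\,\phi(y^p + \epsilon_p y)\, \nu_i(y)\,d\sigma(y) = 0,
\end{equation*}
an identity which, by the absolute bound $\sum_p \epsilon_p^N \|f\|_{L^1(\partial B)} = (|\Omega|/|B|)\,\|f\|_{L^1(\partial B)} < \infty$, actually holds in the sense of finite vector-valued Radon measures on $\Omega$.

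Finally, since the open balls $\{\epsilon_p B + y^p\}_p$ are pairwise disjoint, two spheres $\partial(\epsilon_p B + y^p)$ and $\partial(\epsilon_q B + y^q)$ can meet at most at isolated tangency points, which form an $\mathcal{H}^{N-1}$-null set. Therefore the restriction of the singular measure above to any one sphere $\partial(\epsilon_{p_0} B + y^{p_0})$ reduces to its $p_0$-summand, and the global vanishing forces each summand to vanish separately; since $|\nu|=1$ on $\partial B$, we obtain $f = 0$ on $\partial B$, i.e., $f \in H^1_0(B)$. The main obstacle is that arbitrarily small spheres of the Vitali cover can accumulate near every fixed sphere, so no compactly supported test function can cleanly isolate one sphere by a purely local argument; the remedy is precisely the summability $\sum_p\epsilon_p^N<\infty$, which upgrades the distributional identity to a genuine Radon-measure identity on $\Omega$, after which the $\mathcal{H}^{N-1}$-essential disjointness of the spheres makes the localization automatic.
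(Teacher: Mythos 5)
Your proof is correct, and in the necessity direction it takes a genuinely different route from the paper's. The paper assumes first $f\in C^1(\overline B)$, uses the continuity of traces ($F\in C(\overline{\mathbb{R}}^+_{x_1};H^{1/2}(\mathbb{R}_{x_2}))$) to match the two one-sided Lebesgue-point values of $F$ at a tangency point of two covering balls, and exploits the factor $\epsilon_p$ together with the presence of arbitrarily small neighbouring balls to force $f=0$ pointwise on $\partial B$, finishing with a density argument; you instead integrate by parts ball by ball, identify the weak gradient of $F$ with the piecewise gradient by locality, and observe that the resulting sum of boundary terms is a finite vector-valued Radon measure (thanks to $\sum_p\epsilon_p^N=|\Omega|/|B|<\infty$) which must vanish, after which the $\mathcal{H}^{N-1}$-negligibility of pairwise sphere intersections of disjoint balls localizes the conclusion to a single sphere and yields $\mathrm{tr}\,f=0$ a.e. Your jump-formula argument buys several things: no smoothness assumption on $f$, no approximation step (the paper's final density argument is in fact delicate, since the hypothesis $F\in H^1(\Omega)$ is attached to the given $f$ and does not transfer to $C^1$ approximations of it), and no appeal to the geometric fact that every boundary point of a fixed ball is approached by tangent balls of arbitrarily small radius; what it costs is that you should say explicitly that $N\geq 2$ (so that tangency points are $\mathcal{H}^{N-1}$-null) and that at least one covering ball $\epsilon_{p_0}B+y^{p_0}$ has its boundary sphere contained in $\Omega$ up to an $\mathcal{H}^{N-1}$-null set, so that test functions in $C_c^\infty(\Omega)$ actually see its full boundary term --- a harmless hypothesis satisfied by any reasonable Vitali covering, and of the same nature as the implicit geometric assumptions in the paper's own proof. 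Two cosmetic corrections: in the sufficiency step the series $\sum_pF_p$ converges unconditionally in $H^1$ because the supports are essentially disjoint, so the squared norms add ($\sum_p\epsilon_p^N<\infty$ suffices), but it need not converge absolutely since $\sum_p\epsilon_p^{N/2}$ may diverge; and the "surface sum vanishes" step deserves the one-line locality argument (test with $\phi$ supported in a single ball) that identifies the weak and piecewise gradients before you compare the two identities.
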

\begin{remark}
If $\ \Omega$ is a finite union of disjoint balls, then by the above 
definition for a given $f\in H^1(B)$, $F$ is always in $H^1(\Omega).$ 
The presence of countable infinite balls is leading to the zero trace 
condition on $f$.  
\end{remark}

\begin{proof}[Proof of Lemma \ref{A1}]
We first notice that $F\in L^2(\Omega)$.
As $F(x) = \epsilon_p f(\frac{x-y^p}{\epsilon_p})$ in $\epsilon_p B + y^p $ a.e. on $\Omega$, then
\begin{align*}
 \int_{\Omega} |F(x)|^2 dx =&\ \sum_p\int_{\epsilon_pB + y^p} \epsilon_p^2 \Big|f\Big(\frac{x-y^p}{\epsilon_p}\Big)\Big|^2 dx\\
                           =&\ \sum_p \epsilon_p^{N+2}\int_{B} |f(y)|^2 dy\\
                           \leq &\ \mathcal{O}(1)||f||^2_{L^2(B)}.
\end{align*}

\begin{figure}
 \begin{center}
  \includegraphics[width = 8cm]{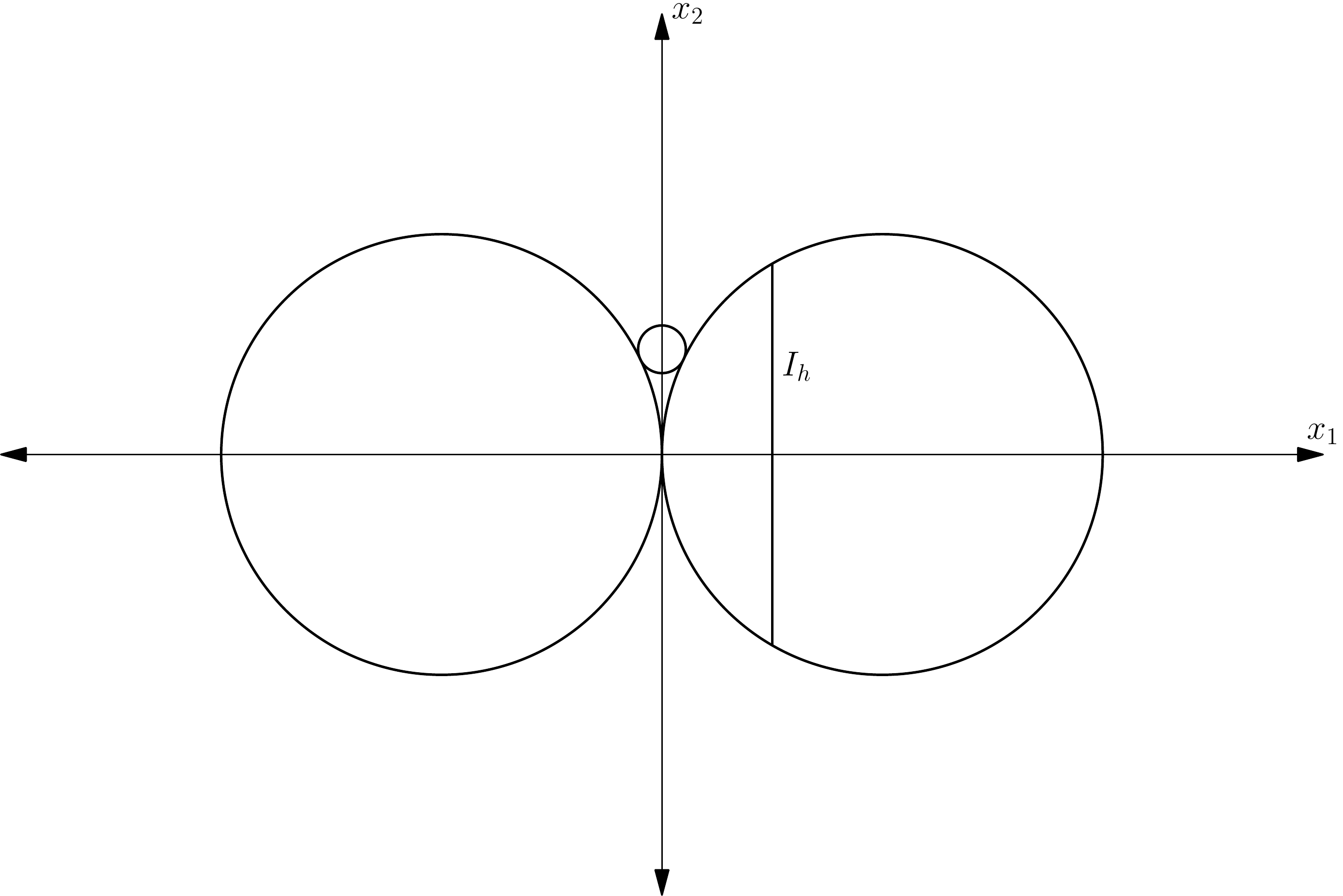}
\end{center}
\caption{}
\end{figure}
\paragraph{Case 1): (only if part)} Let us assume that $f\in H^1_0(B)$.\\
\\
Then the distribution derivative of $F$, say $\nabla F$ can be defined as follows to be in $L^2(\Omega)$:
\begin{equation*}
\nabla_x F(x) = \nabla_y f\Big(\frac{x- y^p}{\epsilon_p}\Big), \quad x\in \epsilon_p B + y^p\ \mbox{a.e. on }\Omega\ \ \mbox{ and }y= \frac{x-y^p}{\epsilon_p}. 
\end{equation*}
It is easy to see that for any $\varphi\in \mathcal{D}(\Omega)$,
\begin{align*}
 -\int_{\Omega} \nabla F \varphi =\ \int_{\Omega} F\nabla\varphi =&\  \sum_p\int_{\epsilon_pB + y^p} \epsilon_p f\Big(\frac{x-y^p}{\epsilon_p}\Big)\nabla\varphi\ dx\\
                              =&\ -\sum_p \int_{\epsilon_pB + y^p} \nabla_y f\Big(\frac{x-y^p}{\epsilon_p}\Big)\varphi\ dx\quad\mbox{ (because }f|_{\partial B} =0)\\
                              =&\ -\int_{\Omega} \sum_p \chi_{(\epsilon_pB+y^p)}\nabla_y f\Big(\frac{x-y^p}{\epsilon_p}\Big)\varphi\ dx,
                              \end{align*}
or
$$\nabla_xF(x) =\ \nabla_y f\Big(\frac{x-y^p}{\epsilon_p}\Big), \quad x\in \epsilon_p B + y^p\ \ \mbox{a.e. on }\Omega $$
and 
\begin{align*}
\int_{\Omega} |\nabla F|^2dx =&\ \sum_p\int_{\epsilon_pB + y^p}  \Big|\nabla_y f\Big(\frac{x-y^p}{\epsilon_p}\Big)\Big|^2 dx = \sum_p \epsilon_p^{N}\int_{B} |\nabla_y f(y)|^2 dy\\
                           =&\ \frac{|\Omega|}{|B|}||\nabla_yf||^2_{L^2(B)}.
\end{align*}
Thus, $F\in H^1(\Omega)$.
\paragraph{Case 2): (if part)} 
Let us assume that $F\in H^1(\Omega)$.\\
\\
\textbf{Step 1.}
For simplicity, we consider $B(0,1)\subset \Omega \subset \mathbb{R}^2$. 
And we assume that $F|_{B} = f \in C^1(\overline{B}).$ Then, we have the following 
claim to establish. \\
\\
\textbf{Claim:}\\
\textit{Origin $(0,0)$ is a Lebesgue point of the trace of $F$ 
on $x_2$-axis, i.e. $\underset{h\rightarrow 0}{\mbox{lim}}
\frac{1}{|I_h|}\int_{I_h} F(0,x_2)dx_2$
exists and it coincides with the value $f(0,0)$, 
where $I_h = (-\sqrt{2h-h^2}, \sqrt{2h-h^2})$}.\\
\\
\textit{Proof of the claim}.
Let us start with recalling the following trace result, which says
\begin{align*}
 F\in H^1(\Omega) \Rightarrow F\in C\big(\overline{\mathbb{R}}_{x_1}^{+}; H^{1/2}({\mathbb{R}}_{x_2})\big).
\end{align*}
So, for $x_2\in I_h= (-\sqrt{2h-h^2}, \sqrt{2h-h^2})$, we have
\begin{align*}
 F(0,x_2) =&\ F(h,x_2) + {o}(1) \quad\mbox{as }h \rightarrow 0\ \ \mbox{ (the $o(1)$ term is small in }H^{1/2}(\mathbb{R}_{x_2}))\\
          =&\ f(h,x_2) + {o}(1).  
\end{align*}
Thus,
$$\frac{1}{|I_h|}\int_{I_h} F(0,x_2)dx_2 =\ \frac{1}{|I_h|}\int_{I_h} f(h,x_2)dx_2 + o(1).$$
Now, by using the uniform continuity of $f$ on $B$, i.e. using $f(h,x_2)-f(0,0) =o(1)$, 
we get
$$ \frac{1}{|I_h|}\int_{I_h} F(0,x_2)dx_2 =\ f(0,0) + o(1).$$
Hence,
$$\quad \underset{h\rightarrow 0}{\mbox{lim}}\ \frac{1}{|I_h|}\int_{I_h} F(0,x_2)dx_2 =\ f(0,0),$$
which establishes our claim.
\paragraph{Step 2.}
Let us consider a Vitali covering of $\Omega$ with 
countable infinite union of disjoint balls $B_p= (\epsilon_pB + y^p)$,
i.e. $\Omega \approx \underset{p\in \mathbb{N}}{\cup}(\epsilon_pB + y^p)$. 
Let us assume that two balls $B$ and some $B_p$ 
are touching each other at origin along the $x_2$ axis.
Then, from the definition of $F$ on $B_p$'s  
and by the above claim, we have
$$ F(0,0)=\ f(0,0) =\ \epsilon_pf\big(\frac{x-y^p}{\epsilon_p}\big)|_{x=0}.$$
It implies that $f(0,0) = 0$. 
Which follows from the fact that if $f(0,0)\neq 0$, then by choosing 
$\epsilon_p$ arbitrarily small, i.e. $\epsilon_p\rightarrow 0$, we 
end up with a contradiction by violating the above second equality. 
\paragraph{Step 3.}
\noindent 
In particular by repeating the same above argument 
for any other points on the boundary $\partial B$, we can show for
$f\in C^1(\overline{B})$, $f|_{\partial B} = 0 $ point wise. 
And the conclusion holds for any dimension also. 
Finally, as $C^1(\overline{B})\cap H^1(B)$ is dense in $H^1(B)$, therefore
for $f\in H^1(B)$, the trace of $f$ vanishes over the boundary $\partial B$
or $f\in H^1_0(B)$. This completes the proof. 
\hfill
\end{proof}

\begin{lemma}\label{A2}
Let us take two open sets $\omega,\Omega\subset\mathbb{R}^N$ and 
consider a Vitali covering of $\Omega$, i.e.
$\Omega \approx \underset{p\in K}{\cup} (\epsilon_{p,n}\omega + y^{p,n})$ with 
$\kappa_n = \underset{p\in K}{sup}\hspace{2pt} \epsilon_{p,n}\rightarrow 0$ 
for a finite or countable $K$ and, for each $n$, the sets $\epsilon_{p,n}\omega + y^{p,n},\ p\in K$ are disjoint.
Let us take $f\in L^2(\omega)$ and define one sequence $f_{n}\in L^2(\Omega)$ as follows
\begin{equation*}
f_{n}(x) = f\Big(\frac{x-y^{p,n}}{\epsilon_{p,n}}\Big)\quad\mbox{in }\epsilon_{p,n}\omega + y^{p,n}.  
\end{equation*}
Then, it converges weakly in $L^2(\Omega)$ to $M_\omega(f)=\ \frac{1}{|\omega|}\int_{\omega} f(y) dy$ (the average of `$f$' over $\omega$). 
\end{lemma}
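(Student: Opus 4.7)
\textbf{Proof plan for Lemma \ref{A2}.}

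The plan is to first establish a uniform $L^2$ bound on $\{f_n\}$, and then verify weak convergence by testing against a dense family of smooth test functions. The uniform bound follows from a direct change of variables: for each $n$,
\begin{equation*}
\|f_n\|_{L^2(\Omega)}^2 \;=\; \sum_p \int_{\epsilon_{p,n}\omega+y^{p,n}} \Big|f\Big(\tfrac{x-y^{p,n}}{\epsilon_{p,n}}\Big)\Big|^2 dx \;=\; \Big(\sum_p \epsilon_{p,n}^N\Big)\,\|f\|_{L^2(\omega)}^2 \;=\; \tfrac{|\Omega|}{|\omega|}\,\|f\|_{L^2(\omega)}^2,
\end{equation*}
using that the Vitali covering forces $\sum_p \epsilon_{p,n}^N = |\Omega|/|\omega|$. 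Thus, up to a subsequence, $f_n$ converges weakly in $L^2(\Omega)$, and it suffices to identify the limit.

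Next, I would test against $\varphi\in C_c(\overline\Omega)$ (which is dense in $L^2(\Omega)$) and perform the change of variables $y = (x-y^{p,n})/\epsilon_{p,n}$ on each cell:
\begin{equation*}
\int_\Omega f_n(x)\,\varphi(x)\,dx \;=\; \sum_p \epsilon_{p,n}^N \int_\omega f(y)\,\varphi(y^{p,n}+\epsilon_{p,n} y)\,dy.
\end{equation*}
I would split $\varphi(y^{p,n}+\epsilon_{p,n} y) = \varphi(y^{p,n}) + r_{p,n}(y)$, where $|r_{p,n}(y)|\le \varpi_\varphi(\kappa_n\,\mathrm{diam}(\omega))$ with $\varpi_\varphi$ the modulus of continuity of $\varphi$. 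The principal term becomes
\begin{equation*}
\Big(\int_\omega f(y)\,dy\Big)\,\sum_p \epsilon_{p,n}^N\,\varphi(y^{p,n}) \;=\; |\omega|\,M_\omega(f)\sum_p \epsilon_{p,n}^N\,\varphi(y^{p,n}),
\end{equation*}
and the Riemann-type sum $\sum_p \epsilon_{p,n}^N|\omega|\,\varphi(y^{p,n})$ converges to $\int_\Omega \varphi(x)\,dx$ as $n\to\infty$, since it differs from $\sum_p \int_{\epsilon_{p,n}\omega+y^{p,n}} \varphi(x)\,dx = \int_\Omega\varphi\,dx$ by at most $\|\varphi\|_\infty \cdot 0 + \varpi_\varphi(\kappa_n\,\mathrm{diam}(\omega))\cdot\frac{|\Omega|}{|\omega|}|\omega|$, which tends to $0$. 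The remainder piece is bounded by $\varpi_\varphi(\kappa_n\,\mathrm{diam}(\omega))\cdot\|f\|_{L^1(\omega)}\cdot\sum_p \epsilon_{p,n}^N$, which also tends to $0$. Assembling the pieces yields
\begin{equation*}
\int_\Omega f_n(x)\,\varphi(x)\,dx \;\longrightarrow\; M_\omega(f)\int_\Omega \varphi(x)\,dx,
\end{equation*}
and by density of $C_c(\overline\Omega)$ in $L^2(\Omega)$ combined with the uniform bound, the full weak convergence $f_n\rightharpoonup M_\omega(f)$ in $L^2(\Omega)$ follows.

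The main obstacle is the countable (rather than finite) nature of the Vitali covering, because it introduces sums $\sum_p \epsilon_{p,n}^N\,\varphi(y^{p,n})$ which must be controlled simultaneously over infinitely many scales tending to zero. The key observation resolving this is that the Vitali constraint $\sum_p \epsilon_{p,n}^N = |\Omega|/|\omega|$ is a \emph{finite} constant independent of $n$, so even infinitely many cells contribute a bounded total weight, and the Riemann-sum comparison with $\int_\Omega \varphi\,dx$ becomes justified via the uniform continuity of $\varphi$ and the fact that $\mathrm{osc}_{\epsilon_{p,n}\omega+y^{p,n}}\varphi \le \varpi_\varphi(\kappa_n\,\mathrm{diam}(\omega))\to 0$ uniformly in $p$.
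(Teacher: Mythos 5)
Your proposal is correct and follows essentially the same route as the paper's proof: the uniform $L^2$ bound via rescaling and $\sum_p \epsilon_{p,n}^N = |\Omega|/|\omega|$, then testing against continuous compactly supported functions, splitting off $\varphi(y^{p,n})$ with the oscillation remainder controlled by uniform continuity, and identifying the Riemann sum limit with $\int_\Omega \varphi\,dx$ before concluding by density. Your write-up is in fact slightly more careful with the bookkeeping (the factor $|\omega|$ in the Riemann sum and the $\mathrm{diam}(\omega)$ in the modulus of continuity), but the argument is the same.
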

\begin{proof}
By rescaling, it simply follows that
\begin{align*}
\int_{\Omega} |f_{n}|^2 dx =& \sum_{p\in K} \int_{\epsilon_{p,n}\omega + y^{p,n}} \Big|f\Big(\frac{x-y^{p,n}}{\epsilon_{p,n}}\Big)\Big|^2 dx \\
                                    =& \Big(\int_{\omega} |f|^2 dx\Big) \sum_{p\in K}\epsilon_{p,n}^N  \quad\mbox{and }\sum_{p\in K} \epsilon_{p,n}^N = \frac{|\Omega|}{|\omega|}, 
\end{align*}
which shows that $f_{n}$ is a bounded sequence in $L^2(\Omega)$. \\
\\
We denote $M_\omega(f)$  the average of $f$ on $\omega$, i.e.  $M_\omega(f)= \frac{1}{|\omega|}\int_{\omega} f(y) dy$.\\
Let us consider any smooth function $\varphi\in C_c(\Omega),$ then we want to show
\begin{equation}\label{cls}
 \int_\Omega f_{n}(x)\varphi(x) dx \rightarrow M_\omega(f)\int_\Omega \varphi(x) dx,
\end{equation}
which is enough in order to have the desired weak convergence in $L^2(\Omega)$ by following density arguments.\\
\\
We have
\begin{equation*}
\Big| \int_{\epsilon_{p,n}\omega + y^{p,n}} f_{n}(x)(\varphi(x) - \varphi(y^{p,n})) dx \Big|
\leq\ \epsilon^N_{p,n} M_\omega(|f|) \underset{x, x^{\prime} \in \epsilon_{p,n}\omega + y^{p,n}}{\ max\ } |\varphi(x) - \varphi(x^{\prime})|,
\end{equation*}
then, by taking sum over $p$,  we get 
\begin{equation}\label{lra}
\Big| \int_{\Omega} f_{n}(x)\varphi(x) - M_\omega(f)\sum_p\epsilon_{p,n}^N \varphi(y^{p,n}) dx \Big|
\leq\ \frac{|\Omega|}{|\omega|} M_\omega(|f|) \underset{|x-x^{\prime}|\leq  \kappa_n} {\ max\ } |\varphi(x) - \varphi(x^{\prime})|. 
\end{equation}
Since $\varphi$ is uniformly continuous on $\Omega$, then in the right hand side of \eqref{lra}, we have
\begin{equation*} \underset{|x-x^{\prime}|\leq  \kappa_n} {\ max\ } |\varphi(x) - \varphi(x^{\prime})| =o(1)\end{equation*}
and in the left hand side of \eqref{lra} the Riemann sum satisfies 
\begin{equation*} \sum_p \epsilon_{p,n}^N \varphi(y^{p,n}) = \int_\Omega \varphi(x) dx + o(1).\end{equation*}
Thus, we get  our desired result. 
\hfill\end{proof}
\begin{remark}
The above lemma can be easily extended to any $L^p$ spaces, for any $1\leq p\leq \infty$ (for $p=\infty$ we consider weak* convergence).   
\end{remark}

\subsection*{Acknowledgments}
C. Conca and J. San Mart\'in were partially supported by BASAL-CMM
1030 Project.

\bibliographystyle{plain}
\bibliography{Master_bibfile}
\end{document}